\newdimen\svparindent
\newenvironment{svgraybox}%
       {\fboxsep=12pt\relax
        \begin{shaded}%
        \list{}{\leftmargin=12pt\rightmargin=2\leftmargin\leftmargin=0pt\topsep=0pt\relax}%
        \expandafter\item\parindent=\svparindent
        \hskip-\listparindent}%
       {\endlist\end{shaded}}%
\definecolor{shadecolor}{gray}{.85}%
\definecolor{tintedcolor}{gray}{.80}%
\newtheorem{theorem}{Theorem} 
\newtheorem{lemma}[theorem]{Lemma} 
\newtheorem{proposition}[theorem]{Proposition} 
\newtheorem{definition}[theorem]{Definition} 
\newtheorem{remark}[theorem]{Remark} 
\numberwithin{equation}{section}
\numberwithin{theorem}{section}
\begin{document}

\title{Concentration waves of chemotactic bacteria: the discrete velocity case}
\author[1]{Vincent Calvez}
\author[2]{Laurent Gosse} 
\author[3]{Monika Twarogowska}
\date{}

\affil[1]{{\small CNRS \& Institut Camille Jordan, Universit\'e de Lyon, France, \texttt{vincent.calvez@math.cnrs.fr}}}
\affil[2]{{\small IAC, CNR, via dei Taurini, 19, 00185 Roma (Italia), 
\texttt{l.gosse@ba.iac.cnr.it}}}
\affil[3]{{\small Unit\'e de Math\'ematiques Pures et Appliqu\'ees, Ecole Normale Sup\'erieure de Lyon, and Inria, project-team NUMED, Lyon, France,  
\texttt{monika.twarogowska@ens-lyon.fr}}
}
%
%
\maketitle

\abstract{
The existence of travelling waves for a coupled system of hyperbolic/ parabolic equations is established in the case of a finite number of velocities in the kinetic equation. This finds application in collective motion of chemotactic bacteria. The analysis builds on the previous work by the first author (arXiv:1607.00429) in the case of a continuum of velocities. Here, the proof is specific to the discrete setting, based on the decomposition of the population density in special Case's modes. Some counter-intuitive results are discussed numerically, including the co-existence of several travelling waves for some sets of parameters, as well as the possible non-existence of travelling waves.}

\section{Introduction and framework}

This note is devoted to the analysis of kinetic models for travelling bands of chemotactic bacteria  {\em E. coli} in a microchannel. 
This builds on the series of papers \cite{saragosti_mathematical_2010,saragosti_directional_2011,calvez_confinement_2015,calvez_chemotactic_2016,companion}. This series of works is motivated by the following seminal experiment: 
A population of  bacteria {\em E. coli} is initially located on the left side of a microchannel after centrifugation (approximately  $5.10^5$ individuals). After short time, a significant fraction of the population moves towards the right side of the channel, at constant speed, within a constant profile \cite{saragosti_directional_2011}, see Figure \ref{fig:cartoon} for a schematic picture. 
We refer to \cite{adler_chemotaxis_1966} for the original experiment, and \cite{tindall_overview_2008} for a thorough review about the mathematical modelling of collective motion of bacteria in the light of this experiment, initiated by the celebrated work by Keller and Segel \cite{keller_traveling_1971}.

\begin{figure}[t]
\begin{center}
\includegraphics[width = .8\linewidth]{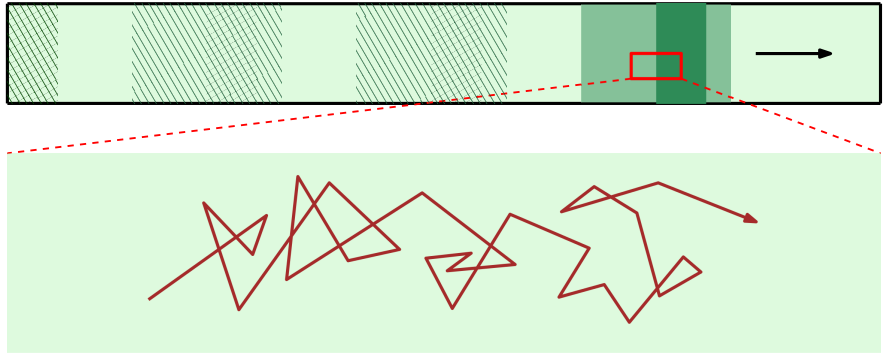}
\caption{Cartoon of concentration waves of bacteria as observed in experiments (see \cite{adler_chemotaxis_1966,saragosti_directional_2011}). The population of bacteria is initially located on the left hand side of the channel after centrifugation. Shortly, a large fraction of the population detaches and propagate to the right side at constant speed. Individual trajectories follows a run-and-tumble process in first approximation: cells alternate between straight runs and fast reorientation events (tumbles). The duration of run phases is modulated by sensing temporal variations of the chemical gradients in the environment. We refer to \cite{berg_e._2004} for biological aspects of motions of {\em E. coli}.}
\label{fig:cartoon}
\end{center}
\end{figure}


Kinetic models have proven to be well suited to study bacteria locomotion and chemotaxis, which navigate in a liquid medium according to a biased run-and-tumble process  \cite{berg_e._2004}. They were first introduced and investigated in  the 70's by Stroock \cite{stroock_stochastic_1974}, then in the early 80's by Alt \cite{alt_biased_1980}. We refer to \cite{othmer_models_1988,erban_signal_2005,dolak_kinetic_2005,chalub_model_2006,xue_macroscopic_2013,perthame_derivation_2015} for the description of the run-and-tumble model at multiple scales. In particular, \cite{xue_travelling_2010} and \cite{franz_travelling_2013} deals with the modelling of the same experiment, and \cite{almeida_existence_2014,emako_traveling_2016} is about the modelling of the interactions between two strains into the same wave of propagation. We also refer to the recent works   \cite{filbet_inverse_2013,rousset_simulating_2013,filbet_numerical_2014,yasuda_monte_2015,emako_well-balanced_2016} concerning numerical simulations of multiscale models of chemotactic bacteria.

Here, we investigate a basic kinetic model, coupled to reaction-diffusion equations for the dynamics of chemical species. 
The population of bacteria is described by its density $f(t,x,v)$ in the phase space position$\times$velocity. In addition, two chemical species are considered, according to the leading hypothesis in \cite{salman_solitary_2006,xue_travelling_2010,saragosti_mathematical_2010,saragosti_directional_2011}. We denote by $N(t,x)$ the concentration of some nutrient, which is distributed homogeneously in the domain at initial time. We also denote by $S(t,x)$ the concentration of some amino-acid which mediates cell-cell communication. The full model is written as follows,
\begin{equation}
\label{eq:meso model}
\left\{\begin{array}{l}
\displaystyle \partial_t f(t,x,v) + v\cdot \nabla_x f(t,x,v) =  \int_{v'\in V}  {\bf T} (t,x,v')  f(t,x,v')\, d\nu(v') -    {\bf T} (t,x,v) f(t,x,v)\medskip\\
\partial_t S(t,x)  =  D_S \Delta S(t,x) - \alpha S(t,x) + \beta \rho(t,x) \medskip\\
\partial_t N(t,x)  =  D_N \Delta N(t,x)  - \gamma \rho(t,x) N(t,x)\, ,
\end{array}\right.
\end{equation}
where $\rho$ denotes the spatial density: 
$\rho(t,x) = \int f(t,x,v)\, d\nu(v)\, . $
Here, $\alpha,\beta,\gamma,D_S,D_N$ are positive constants. The measure $\nu$ is a compactly supported probability measure on the velocity space. The tumbling (scattering) rate ${\bf T} (t,x,v)$ expresses temporal sensing of navigating bacteria:
\begin{multline*}
{\bf T}(t,x,v) = 1 - \chi_S \mathrm{sign}\left (\partial_t S(t,x)+ v\nabla_x S(t,x)\right ) -  \chi_N \mathrm{sign}\left (\partial_t N(t,x)+ v\nabla_x N(t,x)\right )\, , \\(\chi_S,\chi_N)\in (0,\frac12)\times(0,\frac12)\, .
\end{multline*} 
It is assumed that any single  bacteria is influenced by  temporal variations of both concentrations $S$ and $N$ along its trajectory with velocity $v$. Furthermore, it is able to distinguish perfectly between {\em favourable} directions (with positive variation) and {\em unfavourable} directions (with negative variation). It modulates the tumbling rate accordingly: runs are relatively longer if direction is favourable (because the tumbling rate is relatively smaller). Finally, we assume that both signal contributions are additive, with possibly two different values for the coefficients $\chi_S, \chi_N$. 

For the sake of simplicity, we assume that
\begin{equation}\label{eq:cond chi}
  \chi_N\leq \chi_S\, .
  \end{equation}  
This condition appears at some point during the analysis. We believe that our results hold true also in the opposite case $\chi_N>\chi_S$. However, this would require more complicated arguments that we postpone for future work.   

As the kinetic equation is conservative, we assume without loss of generality that 
\begin{equation}\label{eq:unit}
\iint f(t,x,v) \, d\nu(v)dx = \int \rho(t,x)\, dx = 1\, .
\end{equation}

We refer to \cite{saragosti_directional_2011,calvez_chemotactic_2016} for a detailed discussion about the relevance of this model. 

In this work, we examine the case of a finite number of velocities $N = 2K+1$, for some integer $K$. Let $(v_k)$ be the set of discrete velocities,  and $\boldsymbol\omega = (\omega_k)$ be the corresponding weights. 
We adopt the following notation: index $k$ ranges from $-K$ to $K$, with $v_0 = 0$. Let denote $\mathcal K = [-K,K]$ the set of indices. 
The measure  $\nu$ is defined as follows,
\begin{equation*}
\nu =  \sum_{k\in \mathcal K} \omega_k \delta_{v_k} \,,\quad \sum_{k\in \mathcal K} \omega_k = 1\,.
\end{equation*} 
We assume that nodes and weights are symmetric with respect to the origin: 
\begin{equation*}
(\forall k\in \mathcal K) \quad v_k = -v_{-k} \, , \quad \omega_k = \omega_{-k} \, . 
\end{equation*}


We seek one-dimensional  travelling wave solutions, that we write $f(x-ct,v)$, $S(x-ct), N(x-ct)$ with some slight abuse of notation. Thus, we are reduced to investigate the following problem: 
\begin{equation}\label{eq:TW}
\begin{cases}
  \displaystyle  (v_k-c)\partial_z f(z,v_k) = \sum_{k'\in \mathcal K} \omega_{k'} T(z,v_{k'}-c) f(z,v_{k'}) -   T(z,v_{k}-c) f(z,v_{k}) \medskip\\
 - c\partial_z S(z)  -  D_S \partial^2_{z} S(z)  + \alpha S(z) = \beta \rho(z) \medskip\\
- c\partial_z N(z)  - D_N \partial_{z}^2 N(z) = - \gamma \rho(z) N(z)\, , 
\end{cases}
\end{equation}
where the speed  $c$ is an unknown real number. As the problem is symmetric, we look for a positive value $c>0$ without loss of generality. The tumbling rate in the moving frame is
\begin{equation}\label{eq:T}
T(z,v-c) = 1 - \chi_S \mathrm{sign}\left ( (v-c)\partial_z S(z)\right ) -  \chi_N \mathrm{sign}\left ((v-c)\partial_z N(z)\right )\, .
\end{equation}

In \cite{calvez_chemotactic_2016}, the existence of travelling waves is established in the  case of a continuum of velocity. Namely, it is assumed that the measure $\nu$ is absolutely continuous with respect to the Lebesgue measure, and that the probability density function belongs to $L^p$ for some $p>1$. 
Here, we investigate this problem in the case of discrete velocities, which is not contained in  \cite{calvez_chemotactic_2016}. Rather than stating a global result, we present the framework for studying \eqref{eq:TW}. This enables to discuss numerically  the possible non existence of travelling waves. 

The following methodology is adopted in \cite{saragosti_mathematical_2010,calvez_chemotactic_2016}:
\begin{svgraybox}{\textbf{Framework: construction of travelling waves}}
\begin{enumerate}[(i)]
\item Assume  {\em a priori} that $N$ is increasing everywhere, and that $S$ is unimodal, with a single maximum located at $z = 0$, say. This enables to decouple the kinetic equation from the reaction-diffusion equations in \eqref{eq:TW}, since $T$ is then fully characterized by \eqref{eq:tumbling c}, see also Figure \ref{fig:tumbling}.
\item Prove that there exists a non trivial density $f$, which decays exponentially fast as $|z|\to +\infty$. This is an expression of the confinement effect due to the biased modulation of runs, see Section \ref{sec:confinement}.
\item  Prove that the spatial density $\rho$ is also unimodal, with a single maximum located at the transition point $z = 0$, as well. This is the hard task, because $f$ does not share this monotonicity property for all $v$, but $\rho$ does, as a consequence of compensations in averaging, see Section \ref{sec:mono}.
\item Check {\em a posteriori} that $N$ is increasing, and that $S$ is unimodal, with a single maximum reached at $z=0$. The former is unconditionally  true, provided that $c>0$. The latter condition is the crucial one which enables to prescribe the value of $c$.  
\end{enumerate}
\end{svgraybox}

\begin{figure}[t]
\begin{center}
\includegraphics[width = .8\linewidth]{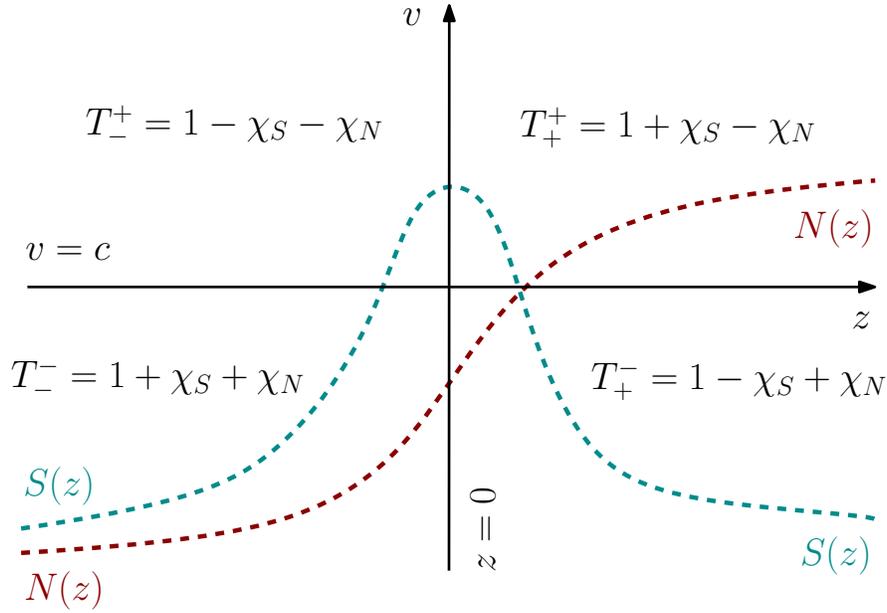}
\caption{The tumbling rate $T(z,v-c)$ at a glance, according to the rule of signes expressed in~\eqref{eq:sign T}.}
\label{fig:tumbling}
\end{center}
\end{figure}

Suppose that $N$ and $S$ share the appropriate monotonicity conditions, as in (i). Then, $T$ \eqref{eq:T} is prescribed as follows: 
\begin{equation}\label{eq:tumbling c}
T(z,v-c) = 1 + \chi_S \mathrm{sign}\left ( (v-c)z\right ) -  \chi_N \mathrm{sign}\left ((v-c)\right )\,.
\end{equation} 
We adopt the following short-cut notations (see also Figure \ref{fig:tumbling}): 
\begin{equation}\label{eq:sign T}
\begin{cases}
T_-^- = 1 + \chi_S + \chi_N\, , \quad \text{(direction is doubly unfavourable),} \medskip\\
T_-^+ = 1 - \chi_S - \chi_N\quad \text{(direction is doubly favourable),} \medskip\\
T_+^- = 1 - \chi_S + \chi_N \quad \text{(direction is favourable for $S$, but unfavourable for $N$),} \medskip\\
T_+^+ = 1 + \chi_S - \chi_N\quad \text{(direction is unfavourable for $S$, but favourable for $N$).}  
\end{cases}
\end{equation} 

Suppose that $c$ is given in a suitable interval for confinement purposes (see below). 
We can associate a probability density $f$, in a unique way, solution of the first equation in  \eqref{eq:TW}. This defines the concentration $S$, through the spatial density $\rho$, as the result of the elliptic equation in the second line of  \eqref{eq:TW}. Then, the matching condition in point (iv) can be formulated as $\partial_zS(0) = 0$. This motivates the following definition. 

\begin{definition} 
Let $c\mapsto \Upsilon(c)$ be the derivative of the concentration $S(z)$ at $z=0$: 
\begin{equation}\label{eq:upsilon}
\Upsilon(c) = \partial_z S(0)\, .
\end{equation}
\end{definition}

The ultimate goal of this paper is to prove that this definition makes perfect sense, and that solving the equation $\Upsilon = 0$ in the range of admissible wave speeds is equivalent to solving problem \eqref{eq:TW}. We also discuss some counter-intuitive examples for which the function $\Upsilon(c)$ has multiple roots, or have no admissible root. 

Section \ref{sec:confinement} is dedicated to the so-called confinement problem: being given appropriate monotonicity of $S$ and $N$, prove the existence of a unique normalized density function $f$ solution of the kinetic equation. Here, we follow a numerical analyst's viewpoint, by decomposing the solution into a finite sum of Case's special functions. Several properties of the solution are also established. Section \ref{sec:mono} justifies the framework presented above, as it is proven that the spatial density $\rho$ reaches a unique maximum, so does the chemical concentration $S$. A careful analysis of the shape of the velocity profiles is required there. Finally, three case studies are presented in Section \ref{sec:EX}, together with numerical simulations of the Cauchy problem.

\section{Confinement by biased velocity-jump processes}\label{sec:confinement}

We denote by $c_\star< c^\star$ resp. the infimum and the supremum of admissible velocities. Exact definitions are given below,  during the course of analysis, see \eqref{eq:def c+}-\eqref{eq:def c-}. Roughly speaking, if $c<c_\star$, the cell density is not confined on the right hand side. On the other hand, if $c>c^\star$, the cell density is not confined on the left-hand-side. 
Let $\mathcal C = (c_\star, c^\star)\setminus\{(v_k)_{k\in \mathcal K}\}$ be the set of admissible velocities. 

The  density of tumbling events per unit of time, which appears together with $\rho$ as a macroscopic quantity in \eqref{eq:TW}, will play a major role in the subsequent analysis: 
\begin{equation}\label{eq:def I}
I(z) = \sum_{k\in \mathcal K} \omega_{k} T(z,v_{k}-c) f(z,v_{k})\,,
\end{equation}

We have the following result, adapted from \cite{calvez_confinement_2015,calvez_chemotactic_2016}, but in a finite velocity setting.
\begin{theorem}\label{th:exist}
Let $c\in \mathcal C$. There exists a unique positive function $f$ with normalization \eqref{eq:unit}, such that for all $z\in \mathbb{R}$, and $k\in \mathcal{K}$,
\begin{equation}\label{eq:decoupled}
(v_k-c)\partial_z f(z,v_k) = \sum_{k'\in \mathcal K} \omega_{k'} T(z,v_{k'}-c) f(z,v_{k'}) -   T(z,v_{k}-c) f(z,v_{k})\, .
\end{equation} 
The functions $z\mapsto f(z,v_k)$ are exponentially decaying on both sides $z<0$ and $z>0$. In addition, we have the following asymptotic behaviour: there exist positive numbers $\lambda_+,\mu_+, \lambda_-, \mu_->0$, such that 
\begin{equation}\label{eq:as mon}
\begin{cases}
(\forall z>0)\quad \dfrac{dI}{dz}(z)= - \mu_+ \exp(-\lambda_+ z) \left (1 + o(z)\right ) \medskip\\
(\forall z<0)\quad \dfrac{dI}{dz}(z)= \mu_-  \exp(\lambda_- z) \left (1+ o(z)\right )
\end{cases}
\end{equation} 
\end{theorem}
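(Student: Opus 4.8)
The plan is to exploit the piecewise-constant structure of the tumbling rate $T(z,v_k-c)$ imposed by \eqref{eq:tumbling c}. On each half-line $z>0$ and $z<0$, and for each velocity index $k$, the sign of $(v_k-c)z$ is constant, so $T(\cdot,v_k-c)$ takes one of the four constant values $T_\pm^\pm$ from \eqref{eq:sign T} depending only on $\mathrm{sign}(v_k-c)$ and on which half-line we sit. Consequently \eqref{eq:decoupled} becomes, on each half-line, a \emph{linear constant-coefficient} system of $N=2K+1$ ODEs in $z$ for the vector $(f(z,v_k))_{k\in\mathcal K}$. The natural approach, as announced in the introduction, is to diagonalize this system via its Case's-mode decomposition: seek elementary solutions $f(z,v_k)=\phi_k e^{-\lambda z}$ of the homogeneous system, leading to a dispersion relation (a "characteristic equation") $\mathcal D(\lambda;c)=0$ whose roots $\lambda$ and associated eigenvectors $\phi=(\phi_k)$ generate all solutions. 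The confined solution on $z>0$ must be the combination of the modes with $\mathrm{Re}\,\lambda>0$ (decaying as $z\to+\infty$), and symmetrically on $z<0$; matching the two expansions at $z=0$ through the continuity/flux conditions of \eqref{eq:decoupled} together with the normalization \eqref{eq:unit} fixes all coefficients. Positivity of $f$ and the count of admissible modes (hence well-posedness of the matching) is exactly where the condition $c\in\mathcal C$ and the definitions $c_\star,c^\star$ enter: one shows that for such $c$ there are precisely the right number of stable modes on each side so that the matching system is square and invertible, and that the resulting $f$ is positive.

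Concretely, the steps I would carry out are: (1) On $z>0$, write $T(z,v_k-c)=\tau_k^+$ with $\tau_k^+\in\{T_-^-,T_-^+,T_+^-,T_+^+\}$ read off from $\mathrm{sign}(v_k-c)$ and the fixed monotonicities of $S,N$; similarly $\tau_k^-$ on $z<0$. (2) Plug the ansatz $f(z,v_k)=\phi_k e^{-\lambda z}$ into \eqref{eq:decoupled}: one gets $-\lambda(v_k-c)\phi_k=\sum_{k'}\omega_{k'}\tau_{k'}^+\phi_{k'}-\tau_k^+\phi_k$, i.e. $\phi_k = \dfrac{\sum_{k'}\omega_{k'}\tau_{k'}^+\phi_{k'}}{\tau_k^+-\lambda(v_k-c)}$, so writing $m=\sum_{k'}\omega_{k'}\tau_{k'}^+\phi_{k'}$ the dispersion relation is the scalar equation $\sum_{k\in\mathcal K}\dfrac{\omega_k\tau_k^+}{\tau_k^+-\lambda(v_k-c)}=1$. (3) Analyze the real roots of this rational equation: it has simple poles at $\lambda=\tau_k^+/(v_k-c)$ for $v_k\ne c$, and a standard interlacing/monotonicity argument on each interval between consecutive poles produces exactly $N-1$ real roots (plus bookkeeping for $v_k=c$, which is excluded since $c\in\mathcal C$); one then sorts the roots by sign of $\mathrm{Re}\,\lambda$ and checks that the number with $\mathrm{Re}\,\lambda>0$ equals the number of indices $k$ with $v_k>c$ — this is the precise content of "$c<c^\star$ gives left-confinement" etc., and it is what makes the half-line solution space have the correct dimension. (4) Do the same on $z<0$ with $e^{+\lambda z}$ modes. (5) Match at $z=0$: impose that $f(0^+,v_k)=f(0^-,v_k)$ for every $k$ (the transport equation forces continuity of each $f(\cdot,v_k)$ since the right-hand side of \eqref{eq:decoupled} is bounded), giving $N$ linear equations; together with \eqref{eq:unit} this is a square system for the mode coefficients, shown invertible, and its solution is shown to be everywhere positive (e.g. by a maximum-principle / comparison argument on the transport form $(v_k-c)\partial_z f(z,v_k)+\tau_k^\pm f(z,v_k)=I(z)\ge0$, integrating along characteristics). (6) Finally, for the asymptotic statement \eqref{eq:as mon}: as $z\to+\infty$ the sum defining $f$, hence $I(z)=\sum_k\omega_k\tau_k^+ f(z,v_k)$, is dominated by the mode with the smallest positive real part $\lambda_+:=\min\{\mathrm{Re}\,\lambda>0\}$; differentiating termwise gives $dI/dz = -\lambda_+ (\text{leading coefficient}) e^{-\lambda_+ z}(1+o(1))$, and one must check the leading coefficient $\mu_+$ is strictly positive (this follows from the positivity of $f$ and a sign analysis of the dominant Case mode, using that $I\ge0$ and $I\to0$). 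The case $z\to-\infty$ is symmetric, defining $\lambda_-,\mu_-$.

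The main obstacle is Step (3) together with the positivity in Step (5): one must prove that for $c\in\mathcal C$ the numbers of stable modes on the two half-lines are exactly what is needed for the $z=0$ matching system to be square \emph{and} nonsingular, and that the unique solution it produces is a genuine positive density rather than a sign-changing one. Counting real roots of the rational dispersion relation is routine interlacing; the delicate point is controlling any complex roots and, above all, establishing positivity — since, as the introduction stresses, individual profiles $f(\cdot,v_k)$ need not be monotone, one cannot argue componentwise and must instead use the coupled structure, most naturally via the nonnegativity of the tumbling term $I(z)$ and Duhamel's formula along characteristics for each transport equation $(v_k-c)\partial_z f+\tau_k^\pm f=I$. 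The strict positivity of the constants $\mu_\pm$ in \eqref{eq:as mon} is then a corollary of this positivity analysis applied to the dominant mode.
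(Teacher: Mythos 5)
Your route is the same as the paper's (piecewise-constant tumbling rates on each half-line, Case-mode ansatz, the dispersion relation \eqref{eq:dispersion}, interlacing of its roots between the poles --- which is precisely where $c_\star<c<c^\star$ enters --- matching at $z=0$, and extraction of the slowest mode for \eqref{eq:as mon}), but your step (5) is wrong as stated, and what is missing there is the key existence argument. Your unknowns are the $K+J+1$ left coefficients $a_j$ and the $K-J$ right coefficients $b_j$, i.e.\ exactly $N=2K+1$ unknowns, while continuity of each $f(\cdot,v_k)$ at $z=0$ gives $N$ \emph{homogeneous} linear equations \eqref{eq:transfer}. If that square homogeneous system were invertible, the only solution would be $f\equiv 0$; and appending the normalization \eqref{eq:unit} makes the system overdetermined, not square. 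What must be proved --- and what the paper proves --- is that the matching matrix \eqref{eq:transfer matrix} is \emph{singular}: the identity \eqref{eq:dispersion ter}, i.e.\ the vanishing of the flux $\sum_k\omega_k(v_k-c)\,(\cdot)$ against every Case mode, shows that the row vector $(\omega_k(v_k-c))_k$ lies in the kernel of the adjoint, hence a nontrivial $(a_j,b_j)$ exists; the normalization then selects it, and positivity and uniqueness are obtained from the ergodicity of the decoupled kinetic equation (as in the earlier works cited), not from invertibility of the matching system. Without this flux/mass-conservation observation your construction produces only the trivial solution.

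Two further points. First, your claim that $\mu_+>0$ ``follows from positivity of $f$'' skips the decisive structural fact: the slowest mode $\lambda_K(c)$ is the only one for which $T_+(v_k-c)-\lambda_K(c)(v_k-c)>0$ for \emph{all} $k$ (see \eqref{eq:entrelacing+}), i.e.\ the only mode with a one-signed velocity profile, and it is this feature that lets positivity of $f$ force $b_K>0$. The paper in fact proves more (Lemma \ref{lem:bK}: a lower bound on $b_K$ uniform in the weights $\boldsymbol\omega$, plus the spectral-gap control \eqref{eq:degeneracy}) because that uniformity is needed for the homotopy argument of Section \ref{sec:mono}; for the bare statement of the theorem a qualitative $b_K>0$ would do, but you should say how you get it. Second, your worry about complex roots is vacuous here: clearing denominators in \eqref{eq:dispersion} gives a degree-$N$ polynomial, and the interlacing already exhibits $N$ real roots (the $N-1$ roots of \eqref{eq:dispersion bis} in the bounded gaps between the $N$ poles, plus $\lambda=0$), so all roots are real and the mode count on each side is exactly the one needed for the matching above.
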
 

The dependency of the various constants with respect to the parameters is described in the proof. In particular, we keep track of the dependency with respect to the weights $\boldsymbol\omega$, as they will vary in Section \ref{sec:mono}. It is crucial to guarantee that the correction terms $o(z)$ in \eqref{eq:as mon} are uniformly small for large $|z|$. In particular, this requires that  the extremal weight $\omega_{K}$ is bounded below by some positive constant, see \eqref{eq:degeneracy} below. 

Note that a similar result in higher dimension was established recently in \cite{mischler_linear_2016}, with a different approach. In the latter work, the description of the stationary distribution $f$ is less explicit. 

\begin{proof}
The proof builds on the numerical analysis developed in \cite[Section 7]{calvez_chemotactic_2016}. It is very much inspired from the study of discrete Case's modes for linear kinetic transport equations, see \cite{gosse_computing_2013} and references therein.

We seek the solution as a combination of Case's modes on each side of the origin $z= 0$. For this purpose, we define the cutting index $J\geq 0$, such that
\begin{equation}\label{eq:cut}
v_{-K} < v_{-K+1} < (\dots) < v_{J} < c < v_{J+1} < (\dots) < v_K\, .
\end{equation}
We make the following ansatz:
\begin{equation}\label{eq:decomposition}
\begin{cases}
(\forall z<0)\,(\forall v_k)\quad&\displaystyle f(z,v_k) = \sum_{j = -K}^J a_j \dfrac{\exp(-\lambda_j(c)z)}{T_-(v_k - c) - \lambda_j(c) (v_k-c)}\medskip\\
(\forall z>0)\,(\forall v_k)\quad&\displaystyle f(z,v_k) = \sum_{j = J+1}^{K} b_j \dfrac{\exp(-\lambda_j(c)z)}{T_+(v_k - c) - \lambda_j(c) (v_k-c)}\, ,
\end{cases}
\end{equation}
where $(a_j,b_j)$ are unknown coefficients. 
It is immediate to check that each mode in \eqref{eq:decomposition} is indeed a special solution if, and only if, $\lambda_j(c)$ is a root of the following dispersion relation,
\begin{equation}\label{eq:dispersion}
\sum_{k\in\mathcal K} \omega_k \dfrac{T_\pm(v_k-c)}{T_\pm(v_k-c)-\lambda  (v_k - c) } = 1\,.
\end{equation}
This equation possesses the trivial solution $\lambda = 0$. However, it is excluded since we seek solutions which are integrable over the whole line. Otherwise, \eqref{eq:dispersion} is equivalent to
\begin{equation}\label{eq:dispersion bis}
\sum_{k\in\mathcal K} \omega_k\left( \dfrac{T_\pm(v_k-c)}{v_k-c}-\lambda\right  )^{-1}= 0\,.
\end{equation}
The latter equation has exactly $K+J+1$ negative solutions, associated with $T_-$, which are interlaced as follows, 
\begin{multline*}
-\infty < \dfrac{T_-^-}{v_{J}-c} < \lambda_{J}(c) < \dfrac{T_-^-}{v_{J-1}-c} < \lambda_{J-1}(c) < \dfrac{T_-^-}{v_{J-2}-c} \\ < (\dots) < \lambda_{-K-1}(c) < \dfrac{T_-^-}{v_{-K}-c} < \lambda_{-K}(c) < 0\, .
\end{multline*}
This definition of the exponents $(\lambda_j(c))$ deserves some careful explanation. The fact that there exists a negative root between  the last singular value $\frac{T_-^-}{v_{-K}-c}$ and value 0 is a consequence of the increasing monotonicity of \eqref{eq:dispersion bis} with respect to $\lambda$, and of the evaluation at $\lambda = 0$, namely
\begin{equation*}
\sum_{k\in\mathcal K} \omega_k \dfrac{v_k-c}{T_-(v_k-c)}>0\,.
\end{equation*}
The latter expresses the fact that the mean algebraic run length is positive, which is obviously required for the confinement phenomenon. This defines an upper value for $c$: $c<c^\star$, where 
\begin{equation}\label{eq:def c+}
\sum_{k\in\mathcal K} \omega_k \dfrac{v_k-c^\star}{T_-(v_k-c^\star)}=0\, .
\end{equation}
Intuitively, the speed $c$ cannot be too large, in order to ensure confinement on the left side. Obviously, in the extreme case where $c> v_K$, relative speeds $v_j-c$ are all negative, so there is no possible confinement!

Similar conclusions can be drawn on the positive side $z>0$: \eqref{eq:dispersion bis} has $K-J$ positive solutions, associated with $T_+$, which are interlaced as follows,
\begin{multline}
\label{eq:entrelacing+}
0 < \lambda_{K}(c) < \dfrac{T_+^+}{v_{K}-c} < \lambda_{K-1}(c) < \dfrac{T_+^+}{v_{K-1}-c}  \\ < (\dots) < \lambda_{J+2}(c) < \dfrac{T_+^+}{v_{J+2}-c} <  \lambda_{J+1}(c) < \dfrac{T_+^+}{v_{J+1}-c}  < + \infty\, .
\end{multline}
Again, the existence of a positive root below the singular value $\frac{T_+^+}{v_{J+1}-c}$ is guaranteed if, and only if, the mean algebraic run length is negative, namely
\begin{equation}\label{eq:confinement +}
\sum_{k\in\mathcal K} \omega_k \dfrac{v_k-c}{T_+(v_k-c)}<0\,.
\end{equation}
The latter prescribes a lower value for $c$: $c>c_\star$, where 
\begin{equation}\label{eq:def c-}
\sum_{k\in\mathcal K} \omega_k \dfrac{v_k-c_\star}{T_+(v_k-c_\star)}=0\, .
\end{equation}

\begin{remark}
It is a consequence of condition \eqref{eq:cond chi} that $c_\star \leq 0$. Indeed, the mean algebraic run length is non positive when $c = 0$. As we seek wave solutions travelling to the right side, it is legitimate to restrict to $c\in(0,c^\star)$ in the present work. 
\end{remark}

In a second step, we associate the number of degrees of freedom in \eqref{eq:decomposition} with the incoming data on each side of the origin. From \eqref{eq:cut}, we deduce that there are $K+J+1$ negative relative velocities $(v_j-c)_{j\in[-K,J]}$, and $K-J$ positive relative velocities $(v_j-c)_{j\in[J+1,K]}$. As a consequence, the solution on the left side $f(z,v)|_{z<0}$ is characterized by the incoming data at the origin\footnote{We refer to \cite{calvez_confinement_2015,calvez_chemotactic_2016} for a discussion between this characterization and the Milne problem in radiative transfer theory \cite{bardos_diffusion_1984}.}, {\em i.e.} the vector 
\begin{equation*}
G_- = \begin{pmatrix}
f(0,v_{-K})\\ \vdots \\ f(0,v_{J}) 
\end{pmatrix}
\end{equation*}
On the other hand, the  solution on the right side $f(z,v)|_{z>0}$ is characterized by the incoming data at the origin, {\em i.e.} the vector 
\begin{equation*}
G_+ = \begin{pmatrix}
f(0,v_{J+1})\\ \vdots \\ f(0,v_{K}) 
\end{pmatrix}
\end{equation*}
We may supposedly relate the incoming data, and the degrees of freedom in the decompositions \eqref{eq:decomposition}, by  square matrices, as they are in the same number. Alternatively, we can express compatibility conditions in \eqref{eq:decomposition} at $z=0$ in the form of a transfer operator. For this we must identify both decompositions at $z=0$:
\begin{equation}\label{eq:transfer}
(\forall v_k)\quad\displaystyle  \sum_{j = -K}^J \dfrac{a_j }{T_-(v_k - c) - \lambda_j(c) (v_k-c)}= \sum_{j = J+1}^{K}  \dfrac{b_j}{T_+(v_k - c) - \lambda_j(c) (v_k-c)}
\end{equation}
This is exactly equivalent to developing a fixed point argument on the right inflow data $G_+$ as in \cite{calvez_confinement_2015,calvez_chemotactic_2016}: suppose we are given $G_+$, we can invert the second part of \eqref{eq:decomposition} to find $(b_j)_{j\in[J+1,K]}$ by solving a $(K-J)\times (K-J)$ linear system. This prescribes a left inflow data $G_-$. Again, we can invert the first  part of \eqref{eq:decomposition} to find $(a_j)_{j\in[-K,J]}$ by solving a $(K+J+1)\times (K+J+1)$ linear system. This yields in turn a right inflow data $\widetilde{G}_+$ that should coincide with $G_+$. This fixed point procedure is all contained in \eqref{eq:transfer}. The existence of a non zero vector such that  \begin{equation}\label{eq:transfer matrix}
\begin{pmatrix}
\left ( T_-(v_k - c) - \lambda_j(c) (v_k-c) \right )^{-1} \\
- \left ( T_+(v_k - c) - \lambda_j(c) (v_k-c) \right )^{-1} 
\end{pmatrix}_{(k,j)}
\begin{pmatrix}
a_j\\ b_j 
\end{pmatrix}_{(j)} = 0 \, , 
\end{equation}
is a direct consequence of mass conservation. Indeed, formulation \eqref{eq:dispersion bis} can be rewritten as 
\begin{equation}\label{eq:dispersion ter}
(\forall j)\quad \sum_{k\in\mathcal K} \omega_k\dfrac{v_k-c}{ T_\pm(v_k - c) - \lambda_j(c) (v_k-c)} = 0\,.
\end{equation}
This means exactly that the row vector $(\omega_k(v_k-c))_{(k)}$ belongs to the kernel of the adjoint problem of \eqref{eq:transfer matrix}. We deduce the existence of non trivial coefficients $(a_j,b_j)$. This yields a solution of the stationary problem \eqref{eq:decoupled} over the whole line. 

Positivity and uniqueness of $f$ are both consequences of the ergodicity underlying the linear (decoupled) kinetic equation, see \cite{calvez_confinement_2015,calvez_chemotactic_2016}. 

The rest of the proof of Theorem \ref{th:exist} consists in a series of Lemma that establish appropriate bounds for the solution. This leads ultimately to the quantitative estimate \eqref{eq:as mon}. 

Contrary to the decomposition of the solution in Case's modes \eqref{eq:decomposition}, we shall now use the Duhamel formulation along characteristic lines:
\begin{align}
& (\forall z>0)(\forall v_k<c) \quad f(z,v_k) = \int_0^{+\infty} I(z-s(v_k-c)) \exp\left ( -s T_+^- \right )\, ds \label{eq:duhamel1} \\
& (\forall z>0)(\forall v_k>c) \quad f(z,v_k) = f(0,v_k)\exp\left (  -\frac{ z T_+^+}{v_k-c} \right )\nonumber\\
& \qquad\qquad\qquad  \qquad\qquad\qquad +  \int_0^{\frac{z}{v_k-c}} I(z-s(v_k-c)) \exp\left ( -s T_+^+ \right )\, ds\, . \label{eq:duhamel2}
\end{align}
Similar formulas hold on the other side $(z<0)$.

\begin{lemma}[$L^\infty$ bound]
The function $f$ is uniformly bounded,  independently of the weigth $\boldsymbol\omega$. 
\end{lemma}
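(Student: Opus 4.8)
The plan is to reduce the $L^\infty$ bound on $f$ to an $L^1(\mathbb{R})$ bound on the tumbling density $I$ of \eqref{eq:def I}, and then to propagate that bound through the Duhamel representation \eqref{eq:duhamel1}--\eqref{eq:duhamel2} and its mirror image on $z<0$. The point is that, once $f$ is known to be positive and integrable (which has just been established, with exponential decay coming from \eqref{eq:decomposition}), both the control of $\|I\|_{L^1}$ and the passage from $\|I\|_{L^1}$ to $\|f\|_{L^\infty}$ can be carried out without ever involving the weights $\boldsymbol\omega$.

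First I would observe that $0<T_-^+\le T(z,v_k-c)\le T_-^-$ pointwise (see \eqref{eq:sign T}), so that $I\ge 0$ and, using the normalization \eqref{eq:unit},
\begin{equation*}
\int_\mathbb{R} I(z)\,dz=\sum_{k\in\mathcal K}\omega_k\int_\mathbb{R} T(z,v_k-c)f(z,v_k)\,dz\le T_-^-\sum_{k\in\mathcal K}\omega_k\int_\mathbb{R} f(z,v_k)\,dz=1+\chi_S+\chi_N .
\end{equation*}
Next, fix $z>0$. For $v_k<c$, the change of variables $w=z-s(v_k-c)$ in \eqref{eq:duhamel1} gives $f(z,v_k)=\tfrac{1}{c-v_k}\int_z^{+\infty}I(w)\exp(-\tfrac{(w-z)T_+^-}{c-v_k})\,dw\le\|I\|_{L^1}/(c-v_k)$. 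For $v_k>c$, the same substitution bounds the integral term in \eqref{eq:duhamel2} by $\|I\|_{L^1(\mathbb{R}_+)}/(v_k-c)$, while the boundary term is at most $f(0,v_k)$; applying the mirror Duhamel formula on $z<0$ to the same velocity $v_k>c$ yields $f(0,v_k)=\tfrac{1}{v_k-c}\int_{-\infty}^{0}I(w)\exp(\tfrac{wT_-^+}{v_k-c})\,dw\le\|I\|_{L^1(\mathbb{R}_-)}/(v_k-c)$, hence $f(z,v_k)\le\|I\|_{L^1}/(v_k-c)$ in this case as well; the region $z<0$ is symmetric. Writing $\delta:=\min_{k\in\mathcal K}|v_k-c|$, which is positive since $c\in\mathcal C$ avoids the nodes, one concludes $\|f\|_{L^\infty}\le(1+\chi_S+\chi_N)/\delta$.

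I do not expect a serious obstacle here; the only point requiring attention is the bookkeeping, namely checking that every intermediate estimate is genuinely $\boldsymbol\omega$-free. This is precisely why the whole argument is reduced to $\|I\|_{L^1}$ (controlled by the conserved mass \eqref{eq:unit}) and to $\mathrm{dist}(c,\{v_k\})$, rather than to pointwise bounds on $I$ or on individual boundary traces, which could a priori degenerate as the weights vary. Should the estimate be needed uniformly for $c$ in a compact subset $\mathcal C_0\subset\mathcal C$ (as in Section~\ref{sec:mono}), it suffices to replace $\delta$ by $\inf_{c\in\mathcal C_0,\,k\in\mathcal K}|v_k-c|>0$.
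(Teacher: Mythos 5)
Your proposal is correct and follows essentially the same route as the paper: bound $\|I\|_{L^1}$ by the mass normalization \eqref{eq:unit} and $\max T$, use the Duhamel formula \eqref{eq:duhamel1} (and its mirror on $z<0$) to bound $f$ for outgoing relative velocities and hence $f(0,v_k)$, then propagate through \eqref{eq:duhamel2}, all with constants depending only on $\max T$ and $\mathrm{dist}(c,\{v_k\})$, not on $\boldsymbol\omega$. Your write-up is merely a bit more explicit about the $f(0,v_k)$ step, which the paper dispatches with ``similar estimate holds true for $z<0$ and $v_k>c$.''
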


\begin{proof}
Firstly, 
let recall that the solution $f$ is normalized to have unit mass \eqref{eq:unit}:
\begin{equation*}
\int_{\mathbb{R}}  \rho(z) \, dz = \int_{\mathbb{R}} \sum_{k\in \mathcal K} \omega_k f(z,v_k)\, dz = 1\, .
\end{equation*}
As a by-product, the macroscopic quantity $I(z)$ \eqref{eq:def I}, which is easily comparable with $\rho(z)$, has a uniformly bounded integral over $\mathbb{R}$.
We immediately deduce from the Duhamel formula \eqref{eq:duhamel1}, that $f(z,v_k)$ is uniformly bounded for $z>0$ and $v_k<c$:
\begin{align}
f(z,v_k) & = \int_0^{+\infty} I(z+y) \exp\left (  -\frac{ y T_+^+}{c-v_k} \right )\dfrac{1}{c-v_k} \, dy\nonumber\\
& \leq \dfrac{1}{c-v_k} \int_{\mathbb{R}} I(z)\, dz\nonumber \\
& \leq \dfrac{\max T}{c-v_k} \, .\label{eq:bound fz} 
\end{align}
Similar estimate holds true for $z<0$ and $v_k>c$. Therefore, $f(0,v_k)$ is uniformly bounded for all $k$, independently of the weigth  $\boldsymbol\omega$. 

The same bounds can be propagated to any $z>0$ (resp. $z<0$) using \eqref{eq:duhamel2}. 
%
\qed\end{proof}

The decomposition \eqref{eq:decomposition} is a nice characterization of the density over $\{z<0\}$ and $\{z>0\}$, respectively. The coefficients $(a_j,b_j)$ were not specified in \eqref{eq:decomposition}. However, they are in relation with the profile at $z = 0$ as in the  following lemma.

\begin{lemma}[Expression of $b_i$'s]\label{lem:ortho}
Each coefficient $b_i$ in \eqref{eq:decomposition} is given by the following orthogonality formula: 
\begin{equation}\label{eq:ortho}
b_i = \dfrac{\displaystyle\sum_{k\in \mathcal K} \omega_k (v_k-c) f(0,v_k) \dfrac{T_+(v_k-c)}{T_+(v_k-c) - \lambda_i(c) (v_k-c)}}{\displaystyle\sum_{k\in \mathcal K}  \omega_k(v_k-c) \dfrac{T_+(v_k-c)}{\left (T_+(v_k-c) - \lambda_i(c) (v_k-c)\right )^2}}\, .
\end{equation}
\end{lemma}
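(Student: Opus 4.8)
The plan is to derive the orthogonality formula \eqref{eq:ortho} from the special structure of the Case's modes on the right side $z>0$, exactly as one does for discrete-ordinate transport equations. The starting point is the decomposition \eqref{eq:decomposition} evaluated at $z=0$:
\[
f(0,v_k) = \sum_{j=J+1}^K b_j\,\phi_j(v_k)\,, \qquad \phi_j(v_k) := \frac{1}{T_+(v_k-c) - \lambda_j(c)(v_k-c)}\,.
\]
I would first exhibit a suitable bilinear pairing under which the eigenmodes $\phi_j$ are mutually orthogonal. The natural candidate, suggested by the adjoint relation \eqref{eq:dispersion ter}, is
\[
\langle g, h\rangle := \sum_{k\in\mathcal K}\omega_k (v_k-c)\,T_+(v_k-c)\,g(v_k)\,h(v_k)\,,
\]
or a close variant of it. The claim to establish is that $\langle \phi_i,\phi_j\rangle = 0$ for $i\neq j$ (among the indices $j\in[J+1,K]$), so that testing the identity $f(0,\cdot)=\sum_j b_j\phi_j$ against $\phi_i$ isolates $b_i$.

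The key algebraic step is the standard ``resolvent/partial-fractions'' identity for Case-type modes. Writing $\theta_k := \frac{T_+(v_k-c)}{v_k-c}$, one has $\phi_j(v_k) = \frac{1}{v_k-c}\cdot\frac{1}{\theta_k - \lambda_j}$, and for $\lambda_i\neq\lambda_j$,
\[
\frac{1}{\theta_k-\lambda_i}\cdot\frac{1}{\theta_k-\lambda_j} = \frac{1}{\lambda_i-\lambda_j}\left(\frac{1}{\theta_k-\lambda_i} - \frac{1}{\theta_k-\lambda_j}\right)\,.
\]
Multiplying by $\omega_k(v_k-c)T_+(v_k-c) = \omega_k(v_k-c)^2\theta_k$ and summing over $k$, the dispersion relation in the form \eqref{eq:dispersion ter} — namely $\sum_k \omega_k(v_k-c)\phi_j(v_k)=0$ for each admissible $\lambda_j$ — makes each of the two terms on the right vanish, giving $\langle\phi_i,\phi_j\rangle=0$. (One must be a little careful: the two relevant weighted sums are $\sum_k\omega_k(v_k-c)^2\theta_k\phi_i\phi_j$; expanding $\theta_k = (\theta_k-\lambda_i)+\lambda_i$ reduces it to combinations of $\sum_k\omega_k(v_k-c)\phi_i\phi_j$ and $\sum_k\omega_k(v_k-c)\phi_j$-type sums, each killed by \eqref{eq:dispersion ter} after another partial-fraction split. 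I would organize this so that only first powers of the resolvent appear.) Having the orthogonality, testing $f(0,\cdot)$ against $\phi_i$ yields
\[
\langle f(0,\cdot),\phi_i\rangle = b_i\,\langle\phi_i,\phi_i\rangle\,,
\]
and it remains to compute $\langle\phi_i,\phi_i\rangle = \sum_k\omega_k(v_k-c)T_+(v_k-c)\phi_i(v_k)^2 = \sum_k\omega_k(v_k-c)\frac{T_+(v_k-c)}{(T_+(v_k-c)-\lambda_i(v_k-c))^2}$, which is exactly the denominator in \eqref{eq:ortho}, while the numerator $\langle f(0,\cdot),\phi_i\rangle = \sum_k \omega_k(v_k-c)f(0,v_k)\frac{T_+(v_k-c)}{T_+(v_k-c)-\lambda_i(v_k-c)}$ matches the numerator. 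Solving for $b_i$ gives \eqref{eq:ortho}.

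The main obstacle I anticipate is not the orthogonality computation itself but justifying that it determines the $b_i$ unambiguously: one needs the family $\{\phi_j\}_{j\in[J+1,K]}$ to be linearly independent (so that the $b_i$ in \eqref{eq:decomposition} are well-defined), and one needs $\langle\phi_i,\phi_i\rangle\neq 0$ so that the formula is not $0/0$. Linear independence follows from the interlacing \eqref{eq:entrelacing+}, which guarantees the $\lambda_j(c)$ are pairwise distinct, combined with a Cauchy-matrix (Vandermonde-type) nonvanishing argument for the matrix $(\phi_j(v_k))$. Non-degeneracy of $\langle\phi_i,\phi_i\rangle$ requires checking a sign: since $\lambda_i>0$ lies below the singular value $\frac{T_+^+}{v_i-c}$ by \eqref{eq:entrelacing+}, one should verify that the summand signs do not conspire to cancel — in fact $\langle\phi_i,\phi_i\rangle = \frac{d}{d\lambda}\big[\sum_k\omega_k(v_k-c)\phi_j^{-1}\text{-type expression}\big]$ evaluated at $\lambda_i$, i.e. it is (up to sign) the derivative of the dispersion function \eqref{eq:dispersion bis} at its root $\lambda_i$, which is nonzero because the roots are simple (again by interlacing and strict monotonicity of \eqref{eq:dispersion bis} between consecutive poles). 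I would state this derivative identity explicitly, since it both proves non-degeneracy and gives a clean conceptual reading of the denominator in \eqref{eq:ortho}.
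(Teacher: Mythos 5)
Your proposal is correct and takes essentially the same route as the paper: you test the decomposition \eqref{eq:decomposition} at $z=0$ against the dual mode $T_+(v_k-c)/\bigl(T_+(v_k-c)-\lambda_i(c)(v_k-c)\bigr)$ with the signed weights $\omega_k(v_k-c)$, and you cancel the cross terms by a partial-fraction split combined with the dispersion relation \eqref{eq:dispersion}--\eqref{eq:dispersion ter}, which is exactly the paper's computation. Your extra observation that the denominator equals $\lambda_i(c)$ times the (positive) derivative of the dispersion function \eqref{eq:dispersion bis} at its simple root $\lambda_i(c)$, hence cannot vanish, is a useful complement that the paper leaves implicit.
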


\begin{proof}
We take the scalar product of the second line in \eqref{eq:decomposition} and the dual eigenvector  indexed by $i$, $\frac{T_+(v_k-c)}{T_+(v_k-c) - \lambda_i(c) (v_k-c)}$, with the weights $ (\omega_k(v_k-c))$\footnote{Notice that these weights are  signed.}. We realize that crossed terms cancel for $j\neq i$:
\begin{align*}
& \sum_{k\in \mathcal K} \omega_k (v_k-c) \dfrac{T_+(v_k-c)}{\left (T_+(v_k-c) - \lambda_j (v_k-c)\right )\left (T_+(v_k-c) - \lambda_i  (v_k-c)\right )} \\
& = \dfrac{1}{\lambda_i - \lambda_j } \sum_{k\in \mathcal K} \omega_k  \dfrac{\left (\lambda_i - \lambda_j \right )(v_k-c) + T_+(v_k-c) - T_+(v_k-c)}{\left (T_+(v_k-c) - \lambda_j  (v_k-c)\right )\left (T_+(v_k-c) - \lambda_i  (v_k-c)\right )}T_+(v_k-c)\\
& = \dfrac{1}{\lambda_i  - \lambda_j } \left( - \sum_{k\in \mathcal K} \omega_k \dfrac{T_+(v_k-c)}{T_+(v_k-c) - \lambda_j  (v_k-c)} + \sum_{k\in \mathcal K}  \omega_k\dfrac{T_+(v_k-c)}{T_+(v_k-c) - \lambda_i  (v_k-c)}  \right )\\
& = 0\, . 
\end{align*}
The cancellation holds true due to the dispersion relation \eqref{eq:dispersion} which is common to $\lambda_i$ and $\lambda_j$.  Hence, only the contribution indexed by $i$  remains after multiplication. This yields \eqref{eq:ortho}. 
\qed\end{proof}

Next, we can rewrite the identity \eqref{eq:ortho} in a better way, with positive weights, instead of signed weights, by using the zero flux condition $\sum (v_k-c) f(0,v_k) = 0$, and \eqref{eq:dispersion ter}:
\begin{align}
b_i & = \dfrac{\displaystyle\sum_{k\in \mathcal K} \omega_k (v_k-c) f(0,v_k) \dfrac{\lambda_i(c)(v_k-c) }{T_+(v_k-c) - \lambda_i(c) (v_k-c)}}{\displaystyle\sum_{k\in \mathcal K} \omega_k (v_k-c) \dfrac{\lambda_i(c)(v_k-c)}{\left (T_+(v_k-c) - \lambda_i(c) (v_k-c)\right )^2}} \nonumber \\
 &  = \dfrac{\displaystyle\sum_{k\in \mathcal K}  \omega_k f(0,v_k) \dfrac{ (v_k-c)^2 }{T_+(v_k-c) - \lambda_i(c) (v_k-c)}}{\displaystyle\sum_{k\in \mathcal K} \omega_k \dfrac{ (v_k-c)^2}{\left (T_+(v_k-c) - \lambda_i(c) (v_k-c)\right )^2}} \, .
 \label{eq:ortho 2}
\end{align}
%
Interestingly, this reformulation suggests to use another scalar product, in order to derive appropriate bounds for the coefficients $(b_i)$. This is the purpose of the next lemma.  

\begin{lemma}[Bound of $b_i$'s]
Each coefficient $b_i$ in \eqref{eq:decomposition} is bounded in the following way:
\begin{equation}\label{eq:bound omega b}
(\forall i \in[J+1,K])\; (\forall k\in \mathcal K)\quad  \dfrac{|b_i||v_k-c|}{\left |T_+(v_k-c) - \lambda_i(c) (v_k-c)\right |} \leq \dfrac{\max T}{\sqrt{\omega_k} } \,.
\end{equation}  
\end{lemma}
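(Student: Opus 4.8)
The plan is to read the reformulated identity \eqref{eq:ortho 2} as a Fourier-type coefficient with respect to a well-chosen weighted scalar product, and then to conclude by Cauchy--Schwarz. The key observation is that the weights appearing both in the numerator and the denominator of \eqref{eq:ortho 2} are exactly $\omega_k(v_k-c)^2$, which suggests working in $\mathbb{R}^{\mathcal K}$ endowed with the bilinear form $\langle u,w\rangle_{\boldsymbol\omega}=\sum_{k\in\mathcal K}\omega_k(v_k-c)^2 u_k w_k$ (well-defined, since $c\in\mathcal C$ means $v_k\neq c$ for every $k$).

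First I would introduce the vectors $e^{(i)}=(e^{(i)}_k)_{k\in\mathcal K}$ with $e^{(i)}_k=\bigl(T_+(v_k-c)-\lambda_i(c)(v_k-c)\bigr)^{-1}$, for $i\in[J+1,K]$ (finite entries, thanks to the interlacing \eqref{eq:entrelacing+}). With these notations, \eqref{eq:ortho 2} reads precisely
\[
b_i=\frac{\langle f(0,\cdot),e^{(i)}\rangle_{\boldsymbol\omega}}{\langle e^{(i)},e^{(i)}\rangle_{\boldsymbol\omega}}\,,
\]
so that $b_i e^{(i)}$ is the $\langle\cdot,\cdot\rangle_{\boldsymbol\omega}$-orthogonal projection of the inflow profile $f(0,\cdot)$ onto the line $\mathbb{R}e^{(i)}$ --- note that orthogonality of the $e^{(i)}$'s for different $i$ (which, as in Lemma \ref{lem:ortho}, would follow from the dispersion relation) is not needed here. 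Cauchy--Schwarz for $\langle\cdot,\cdot\rangle_{\boldsymbol\omega}$ then gives at once $|b_i|\,\langle e^{(i)},e^{(i)}\rangle_{\boldsymbol\omega}^{1/2}\le \langle f(0,\cdot),f(0,\cdot)\rangle_{\boldsymbol\omega}^{1/2}$.

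It then remains to bound the two weighted norms. For the inflow profile I would upgrade the $L^\infty$ bound obtained above: its proof in fact delivers the pointwise estimate $|v_k-c|\,f(0,v_k)\le\max T$ for every $k$, by letting $z\to 0$ in \eqref{eq:bound fz} on the relevant side of the origin. Hence, using the normalization $\sum_{k\in\mathcal K}\omega_k=1$,
\[
\langle f(0,\cdot),f(0,\cdot)\rangle_{\boldsymbol\omega}=\sum_{k\in\mathcal K}\omega_k\bigl((v_k-c)f(0,v_k)\bigr)^2\le(\max T)^2\,.
\]
For the other norm I would simply discard all but one term: for the index $k$ of interest, $\omega_k(v_k-c)^2(e^{(i)}_k)^2\le\langle e^{(i)},e^{(i)}\rangle_{\boldsymbol\omega}$. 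Combining the three inequalities,
\[
\frac{|b_i|\,|v_k-c|}{\bigl|T_+(v_k-c)-\lambda_i(c)(v_k-c)\bigr|}=\frac{|b_i|\sqrt{\omega_k(v_k-c)^2(e^{(i)}_k)^2}}{\sqrt{\omega_k}}\le\frac{|b_i|\,\langle e^{(i)},e^{(i)}\rangle_{\boldsymbol\omega}^{1/2}}{\sqrt{\omega_k}}\le\frac{\max T}{\sqrt{\omega_k}}\,,
\]
which is exactly \eqref{eq:bound omega b}.

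I do not expect a real obstacle here; the two points deserving some care are (a) checking that \eqref{eq:ortho 2} is genuinely of the projection form above, i.e. that $\langle\cdot,\cdot\rangle_{\boldsymbol\omega}$ is the right scalar product --- a purely bookkeeping matter about the weights $\omega_k(v_k-c)^2$ --- and (b) turning the earlier $L^\infty$ control into the weighted $L^2$ bound $\langle f(0,\cdot),f(0,\cdot)\rangle_{\boldsymbol\omega}^{1/2}\le\max T$, which is where $\sum_k\omega_k=1$ enters and where it becomes apparent that the resulting bound is uniform with respect to the weights $\boldsymbol\omega$, a feature that will be used in Section \ref{sec:mono}.
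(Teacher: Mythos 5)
Your proof is correct and is essentially the paper's argument: the paper also works with the weighted scalar product $\sum_{k\in\mathcal K}\omega_k(v_k-c)^2u_kw_k$, arrives at the same intermediate inequality $|b_i|\,\langle e^{(i)},e^{(i)}\rangle^{1/2}\le\langle f(0,\cdot),f(0,\cdot)\rangle^{1/2}$, and concludes exactly as you do by keeping a single term in $\langle e^{(i)},e^{(i)}\rangle$ and invoking \eqref{eq:bound fz} together with $\sum_k\omega_k=1$. The only (cosmetic) difference is that the paper obtains this intermediate inequality by expanding $\|f(0,\cdot)\|^2$ in the modes and using their orthogonality (a Parseval/Bessel identity), whereas you apply Cauchy--Schwarz directly to the projection formula \eqref{eq:ortho 2}, correctly noting that cross-mode orthogonality is not needed for this step.
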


\begin{proof}
For any $z>0$, let compute the weighted $\ell^2$ norm of $f(z,v_k)$ as follows,
\begin{align*}
& \|f(z,v_k)\|^2_{\ell^2(\omega_k(v_k-c)^2)}  =\sum_{k\in \mathcal K} \omega_k (v_k-c)^2 f(z,v_k)^2\\
& = \sum_{i,j\in[J+1,K]^2} b_i b_j \exp(-(\lambda_i(c) + \lambda_j(c))z )   \\
& \qquad\quad \times\sum_{k\in \mathcal K} \omega_k  (v_k-c) \dfrac{(v_k-c)}{\left (T_+(v_k-c) - \lambda_j(c) (v_k-c)\right )\left (T_+(v_k-c) - \lambda_i(c) (v_k-c)\right )} \\
& = \sum_{i\in[J+1,K]} b_i^2 \exp(-2\lambda_i(c)z ) \sum_{k\in \mathcal K} \omega_k \dfrac{(v_k-c)^2}{\left (T_+(v_k-c) - \lambda_i(c) (v_k-c)\right )^2}\, , 
\end{align*}
because the cross terms vanish, exactly as in the proof of Lemma \ref{lem:ortho}.

We deduce from this estimate at $z=0$, that for all $i \in[J+1,K]$, and all $k\in \mathcal K$, we have
\begin{equation*}
\sqrt{\omega_k} \dfrac{|b_i||v_k-c|}{\left |T_+(v_k-c) - \lambda_i(c) (v_k-c)\right |} \leq\max_{k'\in \mathcal K}\left (  f(0,v_{k'})|v_{k'}-c|\right )\, . 
\end{equation*}  
This yields \eqref{eq:bound omega b} by \eqref{eq:bound fz}. 
\qed\end{proof}

A fruitful consequence of the latter estimate concerns the asymptotic behaviour of $f$, as $z\to +\infty$. 
%
%
%
Characterization \eqref{eq:decomposition} is now of  great interest. Indeed, the large space behaviour is characterized by the slowest mode \eqref{eq:entrelacing+}. Loosely speaking, we have:
\begin{equation}\label{eq:asympt 1}
(\forall v_k)\quad f(z,v_k) \underset{z\to +\infty}{\sim}  \dfrac{b_K\exp(-\lambda_K(c) z )}{T_+(v_k - c) - \lambda_K(c) (v_k-c)}\,. 
\end{equation}
Furthermore, the asymptotic monotonicity is also intuitively clear from this decomposition: 
\begin{equation}\label{eq:asympt 2}
(\forall v_k)\quad \partial_z f(z,v_k) \underset{z\to +\infty}{\sim}  \dfrac{- \lambda_K(c) b_K\exp(-\lambda_K(c) z )}{T_+(v_k - c) - \lambda_K(c) (v_k-c)}\,. 
\end{equation}

For upcoming purposes, it is necessary to gain some quantitative control about 
the prefactors in the r.h.s. of \eqref{eq:asympt 1} and \eqref{eq:asympt 2}. This is the aim of the next lemma.

%



%
%
%

\begin{lemma}[Bound of $b_K$ from below]\label{lem:bK}
There exists a constant $C$, depending on $\omega$, such that $b_K\geq 1/C$. 
\end{lemma}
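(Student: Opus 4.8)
The plan is to extract a lower bound on $b_K$ from the mass normalization, using that $b_K$ governs the leading-order size of $f$ at $+\infty$ while the other modes decay strictly faster. Concretely, I would start from the Case's-mode decomposition \eqref{eq:decomposition} on $\{z>0\}$ and integrate in $z$: since each mode $e^{-\lambda_j(c)z}$ integrates to $1/\lambda_j(c)$ over $(0,+\infty)$, one gets
\begin{equation*}
\int_0^{+\infty} f(z,v_k)\, dz = \sum_{j=J+1}^{K} \frac{b_j}{\lambda_j(c)\left(T_+(v_k-c)-\lambda_j(c)(v_k-c)\right)}\, .
\end{equation*}
Multiplying by $\omega_k$, summing over $k\in\mathcal K$, and recalling the right-hand-side contribution to the unit mass \eqref{eq:unit}, the left side is at most $1$ (it is the mass carried on $\{z>0\}$, which is $\le 1$). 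The difficulty is that this relates $1$ to a signed combination of all the $b_j$, not to $b_K$ alone.

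To isolate $b_K$, I would instead argue by contradiction combined with the upper bounds already proved. Suppose $b_K$ is small. The bound of $b_i$'s \eqref{eq:bound omega b} already controls every coefficient $b_i$ from above in terms of $\boldsymbol\omega$ and $\max T$; together with the interlacing \eqref{eq:entrelacing+}, which keeps each $\lambda_j(c)$ away from the singular values $\frac{T_+^+}{v_k-c}$, the denominators $T_+(v_k-c)-\lambda_j(c)(v_k-c)$ are bounded below by a positive constant depending on $\boldsymbol\omega$ and the $v_k$. Hence each term $\frac{b_j e^{-\lambda_j(c)z}}{T_+(v_k-c)-\lambda_j(c)(v_k-c)}$ is uniformly bounded, and the whole sum over $j$ is controlled. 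But then, since positivity of $f$ (Theorem \ref{th:exist}) forces $f(z,v_k)\ge 0$ and in particular $f(0,v_k)\ge 0$, one can test the zero-flux/mass relations: the mass on $\{z>0\}$ equals $\sum_k \omega_k \sum_{j}\frac{b_j}{\lambda_j(c)(T_+(v_k-c)-\lambda_j(c)(v_k-c))}$, and each term with $j<K$ carries a smaller factor $1/\lambda_j(c)$ (since $\lambda_j(c)>\lambda_K(c)$) — so the only way the right-side mass can stay bounded away from $0$ (which it must, e.g. because $S$ must be unimodal, or more elementarily because total mass is $1$ and the left-side analogue is symmetric) is that $b_K$ is bounded below.

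A cleaner route, which I would actually write up, avoids the contradiction: take the scalar product of \eqref{eq:decomposition} at general $z>0$ against the positive weights $(\omega_k (v_k-c)^2)$ using the dual structure exactly as in Lemma \ref{lem:ortho}, giving
\begin{equation*}
\|f(z,\cdot)\|^2_{\ell^2(\omega_k(v_k-c)^2)} = \sum_{i=J+1}^{K} b_i^2 e^{-2\lambda_i(c)z}\sum_{k\in\mathcal K}\omega_k\frac{(v_k-c)^2}{\left(T_+(v_k-c)-\lambda_i(c)(v_k-c)\right)^2}\, ,
\end{equation*}
then multiply by $e^{2\lambda_K(c)z}$ and let $z\to+\infty$: all terms with $i<K$ vanish, leaving $b_K^2$ times a strictly positive, $\boldsymbol\omega$-dependent constant equal to $\lim_{z\to\infty} e^{2\lambda_K(c)z}\|f(z,\cdot)\|^2_{\ell^2(\omega_k(v_k-c)^2)}$. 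It therefore suffices to bound this limit below, i.e. to show $f(z,v_k)$ does not decay faster than $e^{-\lambda_K(c)z}$; equivalently that $f(\cdot,v_k)\not\equiv 0$ and is not orthogonal to the leading mode. This follows from positivity of $f$: were the limit zero, $f$ would decay strictly faster than $e^{-\lambda_K(c)z}$ along every characteristic, but plugging this into the Duhamel formula \eqref{eq:duhamel2} with the source $I(z)$ (itself a nonnegative combination of the $f(z,v_k)$, again decaying that fast) and comparing exponential rates — $\lambda_K(c)$ versus $T_+^+/(v_k-c)$, ordered by \eqref{eq:entrelacing+} — forces a strict inequality that can only hold if $f\equiv 0$, contradicting the unit-mass normalization. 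Quantifying this last comparison, and thereby pinning the lower bound's dependence on $\boldsymbol\omega$ explicitly, is the main obstacle; everything else is the algebra of geometric sums and the orthogonality already established.
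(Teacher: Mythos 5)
Your second, ``cleaner'' route is circular at the decisive step, and your first route leaves the real difficulty untouched. In the weighted $\ell^2$ identity, the quantity $\lim_{z\to+\infty} e^{2\lambda_K(c)z}\|f(z,\cdot)\|^2_{\ell^2(\omega_k(v_k-c)^2)}$ is \emph{exactly} $b_K^2\,Z_+$ with $Z_+=\sum_k\omega_k(v_k-c)^2\left(T_+(v_k-c)-\lambda_K(c)(v_k-c)\right)^{-2}$; bounding that limit from below is therefore the same problem as bounding $b_K$ from below, and the only substitute you offer (Duhamel plus a comparison of exponential rates) is left unquantified --- you acknowledge yourself that quantifying it is ``the main obstacle'', but that obstacle is the whole content of the lemma. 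At best such a rate comparison yields the qualitative statement $b_K>0$ for each fixed $\boldsymbol\omega$ (and even that needs more care: if $b_K=0$ the solution on $\{z>0\}$ is not automatically zero; one must combine positivity of $f$ with the fact that for every subleading index $j<K$ the denominators $T_+(v_k-c)-\lambda_j(c)(v_k-c)$ change sign in $k$ by the interlacing \eqref{eq:entrelacing+}). A merely qualitative $b_K>0$ is useless for the way the lemma is used in Section \ref{sec:mono}, where the lower bound must degenerate only through $\omega_K$, uniformly along the homotopy of weights. Note also that in your first route the claim that the interlacing keeps all denominators $T_+(v_k-c)-\lambda_j(c)(v_k-c)$ away from zero is wrong for $j<K$; only the mode $j=K$ has all denominators positive, and the quantitative version $T_+^+-\lambda_K(c)(v_K-c)\geq\omega_K T_+^+$ comes from the dispersion relation \eqref{eq:dispersion}, i.e.\ from \eqref{eq:degeneracy}, not from the interlacing. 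Likewise, ``each term with $j<K$ carries a smaller factor $1/\lambda_j(c)$'' does not make those terms negligible, since nothing bounds the $b_j$ relative to $b_K$.

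The two ideas your proposal is missing are precisely those of the paper's proof. First, for $i=K$ (and only for this index) all the quantities $T_+(v_k-c)-\lambda_K(c)(v_k-c)$ are positive, so the orthogonality formula extended to $z>0$ writes $b_Ke^{-\lambda_K(c)z}$ as an average with positive weights $\varphi_{k+}$ summing to one of $f(z,v_k)\left(T_+(v_k-c)-\lambda_K(c)(v_k-c)\right)$; integrating over $\mathbb{R}_+$ gives $b_K/\lambda_K(c)\geq c(\omega_K)\int_{\mathbb{R}_+}\rho(z)\,dz$ as in \eqref{eq:bound bK below}, with the constant controlled through \eqref{eq:degeneracy}. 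Second --- and this is the step your ``the left-side analogue is symmetric'' remark glosses over --- the unit mass \eqref{eq:unit} only guarantees that one of $\int_{\mathbb{R}_+}\rho$, $\int_{\mathbb{R}_-}\rho$ is at least $1/2$, and the left half-line estimate bounds $a_{-K}$, not $b_K$. The bridge is the comparison of the two $\ell^1$ expressions \eqref{eq:l1norm}--\eqref{eq:l1norm-2} at $z=0$, whose weight ratios are bounded above and below thanks to \eqref{eq:degeneracy}, giving $b_K\asymp a_{-K}$ with constants depending only on $\omega_K$; combined with the two half-line estimates this yields $\max(b_K,a_{-K})\geq 1/C$ and hence the claim for both coefficients. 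Without this comparison (or some other mechanism converting mass on $\{z<0\}$ into a lower bound on $b_K$), your argument fails exactly in the scenario where almost all the mass sits on the left of the origin.
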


\begin{proof}
We extend easily the orthogonality formula \eqref{eq:ortho 2} to any $z>0$:
\begin{equation*}
b_i \exp(-\lambda_i(c)z) = \sum_{k\in \mathcal K}\left [ f(z,v_k)\left ( T_+(v_k-c) - \lambda_i(c) (v_k-c)\right ) \right ]\varphi_{k+}(\lambda_i(c))\, , 
\end{equation*}
where the weights $\varphi_{k+}(\lambda_i(c))>0$ are such that $\sum \varphi_{k+} = 1$: 
\begin{multline*}
\varphi_{k+} = \dfrac1{Z_+}  \dfrac{ (v_k-c)^2 }{\left (T_+(v_k-c) - \lambda_i(c) (v_k-c)\right )^2} \, , \\ 
Z_+ = \displaystyle\sum_{k'\in \mathcal K} \omega_{k'} \dfrac{ (v_{k'}-c)^2}{\left (T_+(v_{k'}-c) - \lambda_i(c) (v_{k'}-c)\right )^2}\, .
\end{multline*}
For $i=K$, we deduce, after integration over $\mathbb{R}_+$, that 
\begin{equation}\label{eq:bound bK below}
\dfrac{b_K}{\lambda_K(c)} \geq \left ( \min_{k\in \mathcal K} \dfrac{T_+(v_k-c) - \lambda_K(c) (v_k-c)}{\omega_k} \right ) \int_{\mathbb{R}_+} \rho(z)\, dz\, . 
\end{equation}
Here, we have used the peculiar property of index $K$, which is such  that $\lambda_K(c) (v_k-c) < T_+(v_k-c)$ for all $k\in \mathcal K$ \eqref{eq:entrelacing+}. The minimum value is uniformly bounded below by \eqref{eq:dispersion}. We could conclude from the unit mass normalization, provided that the integral would be taken over $\mathbb{R}$ in   \eqref{eq:bound bK below}. To overcome this small issue, it is necessary to connect both sides $\{z>0\}$ and $\{z<0\}$ at some point. This is a consequence of the uniform comparison between the two values $b_K$ and $a_{-K}$. To this end, we notice that the identity \eqref{eq:ortho 2} can be recast, for $i=K$, as
\begin{equation}\label{eq:l1norm}
b_K = \left \| f(0,v_k) \left ( T_+(v_k-c) - \lambda_K(c) (v_k-c)\right )   \right \|_{\ell^1(\varphi_+)} \, .
\end{equation}
On the other side, we have similarly
\begin{equation}\label{eq:l1norm-2}
a_{-K} = \left \| f(0,v_k) \left ( T_-(v_k-c) - \lambda_{-K}(c) (v_k-c)\right )   \right \|_{\ell^1(\varphi_-)} \, ,
\end{equation}
with the appropriate choice   for the probability weights $\varphi_-$:
\begin{multline*}
\varphi_{k-} = \dfrac{1}{Z_-} \dfrac{\omega_k  (v_k-c)^2}{\left (T_-(v_k-c) - \lambda_{-K}(c) (v_k-c)\right )^2}\, , \\  Z_- = \sum_{k\in \mathcal K} \dfrac{\omega_k  (v_k-c)^2}{\left (T_-(v_k-c) - \lambda_{-K}(c) (v_k-c)\right )^2}\, .
\end{multline*}
To compare the two $\ell^1$ norms \eqref{eq:l1norm} and \eqref{eq:l1norm-2}, we shall establish the following bounds: there exists a constant $C>0$, depending on $\boldsymbol\omega$ such that 
\begin{equation*}
(\forall k)\quad \dfrac1C \leq \dfrac{\left (T_+(v_k-c) - \lambda_K(c) (v_k-c)\right )\varphi_{k+}}{\left (T_-(v_k-c) - \lambda_{-K}(c) (v_k-c)\right )\varphi_{k-}} \leq C\, . 
\end{equation*}
On the one hand, it is easy to bound the quantities $T_\pm(v_k-c) - \lambda_{\pm K}(c) (v_k-c)$ from above and from below, uniformly with respect to $k$: 
In fact, it is sufficient to control uniformly the following piece of estimate: 
\begin{equation}\label{eq:hlyupim}
  \lambda_{K}(c) < \dfrac{T_+^+}{v_{K}-c}\, .
\end{equation} 
But, we deduce from \eqref{eq:dispersion} that 
\begin{equation}\label{eq:degeneracy}
 \omega_K \dfrac{T_+^+}{T_+^+ -  \lambda_{K}(c) (v_{K}-c)} \leq 1\quad \Rightarrow\quad T_+^+ -  \lambda_{K}(c) (v_{K}-c) \geq \omega_K T_+^+\, .
 \end{equation} 
This gives the required estimate. 

\begin{remark}
Importantly, the bound from below in \eqref{eq:degeneracy} degenerates only if $\omega_K$ vanishes. The fact that it degenerates as $\omega_K$ vanishes is quite obvious, as the velocities $v_{\pm K}$ effectively disappear from the problem. The limit system keeps at most $2K-1$ velocities. The crucial point is that it does not depend upon the other weights. 
\end{remark} 

On the other hand, we have
\begin{equation*}
 \dfrac{\varphi_{k+}}{\varphi_{k-}} = \dfrac{Z_-}{Z_+} \dfrac{\left (T_+(v_k-c) - \lambda_K(c) (v_k-c)\right )^2}{\left (T_-(v_k-c) - \lambda_{-K}(c) (v_k-c)\right )^2}\, .
 \end{equation*} 
It is a consequence of \eqref{eq:degeneracy} that all terms in this fraction are bounded from above and below. 

Finally, combining both estimates \eqref{eq:bound bK below} and the similar estimate for $\{z<0\}$:
\begin{equation*}
\dfrac{a_{-K}}{\lambda_{-K}(c)} \geq \left ( \min_{k\in \mathcal K} \dfrac{T_-(v_k-c) - \lambda_{-K}(c) (v_k-c)}{\omega_k} \right ) \int_{\mathbb{R}_-} \rho(z)\, dz\, , 
\end{equation*}
together with the normalization $\int_{\mathbb{R}_-} \rho(z)\, dz + \int_{\mathbb{R}_+} \rho(z)\, dz = 1$, we get that there exists a constant $C$, depending on $\omega_{K}$, such that 
\begin{equation*}
\max\left (b_K,a_{-K}\right ) \geq \dfrac1C\, .
\end{equation*}
Since $b_K$, and $a_{-K}$ are uniformly comparable by previous estimates, we conclude that they are both bounded from below  by some positive constant.  
\qed\end{proof}

Finally, we can combine all these estimates to determine the asymptotic behaviour of the derivative of $I$. 

\begin{lemma}[Asymptotic monotonicity of $I$]\label{lem:asympt mon}
The derivative of the macroscopic quantity $I$ satisfies the following quantitative estimate:
\begin{equation}
(\forall z>0)\quad \dfrac{dI}{dz}(z)= -  \lambda_K(c)  b_K  \exp(-\lambda_+(c) z) \left (1 + o(z)\right )\, .
\label{eq:quantitative}
\end{equation}
\end{lemma}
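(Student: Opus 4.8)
The plan is to combine the exponential decomposition \eqref{eq:decomposition} with the already-established lower bound on $b_K$ (Lemma~\ref{lem:bK}) and the elementary $L^\infty$-type bounds on the coefficients $(b_i)$ to control $dI/dz$ term by term. First I would compute $dI/dz$ directly from the definition \eqref{eq:def I}. For $z>0$, $T(z,v_k-c)$ is piecewise constant in $z$ (it equals either $T_+^+$ or $T_+^-$, depending only on the sign of $v_k-c$, since $N$ is increasing and $S$ is decreasing on $z>0$), so on the half-line $z>0$ we have $I(z)=\sum_k \omega_k T_+(v_k-c) f(z,v_k)$ with $T_+$ constant, and hence $\frac{dI}{dz}(z)=\sum_k \omega_k T_+(v_k-c)\,\partial_z f(z,v_k)$. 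Plugging in \eqref{eq:decomposition} gives
\begin{equation*}
\frac{dI}{dz}(z) = -\sum_{j=J+1}^K \lambda_j(c)\, b_j \exp(-\lambda_j(c) z)\sum_{k\in\mathcal K}\omega_k\frac{T_+(v_k-c)}{T_+(v_k-c)-\lambda_j(c)(v_k-c)} = -\sum_{j=J+1}^K \lambda_j(c)\, b_j \exp(-\lambda_j(c)z),
\end{equation*}
where the last equality is exactly the dispersion relation \eqref{eq:dispersion}. So $dI/dz$ is a clean sum of decaying exponentials with the correct leading term $-\lambda_K(c) b_K \exp(-\lambda_K(c)z)$; here $\lambda_+(c):=\lambda_K(c)$.

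Next I would isolate the leading term and bound the remainder $R(z):=-\sum_{j=J+1}^{K-1}\lambda_j(c) b_j\exp(-\lambda_j(c)z)$. By the interlacing \eqref{eq:entrelacing+}, every $\lambda_j(c)$ with $j<K$ satisfies $\lambda_j(c) > \lambda_{J+1}(c) > \dots$; more precisely $\lambda_j(c)\ge \lambda_{K-1}(c) > \frac{T_+^+}{v_K-c} \ge \lambda_K(c) + \delta$ for some gap $\delta=\delta(\boldsymbol\omega,c)>0$ coming from the strict interlacing and the uniform lower bound in \eqref{eq:degeneracy}. Each coefficient $|b_j|$ is controlled by \eqref{eq:bound omega b} (with a suitable choice of $k$, using $\omega_K$ bounded below), and $|\lambda_j(c)|$ is bounded above because the roots lie in the bounded intervals described by the interlacing and $c$ is bounded away from the $v_k$'s. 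Hence $|R(z)| \le C(\boldsymbol\omega)\exp(-(\lambda_K(c)+\delta)z)$, so
\begin{equation*}
\frac{dI}{dz}(z) = -\lambda_K(c) b_K\exp(-\lambda_K(c)z)\left(1 + \frac{R(z)e^{\lambda_K(c)z}}{-\lambda_K(c)b_K}\right),
\end{equation*}
and the parenthesized error is $O(e^{-\delta z}) = o(z)$ (in the paper's notation where $o(z)$ means a quantity tending to $0$ as $z\to+\infty$, uniformly controlled). The crucial use of Lemma~\ref{lem:bK} is precisely that $b_K\ge 1/C$, so the denominator $\lambda_K(c)b_K$ in the error term does not degenerate; without it the "$1+o(z)$" form would be meaningless.

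The main obstacle I anticipate is not the algebra but making the error term $o(z)$ \emph{uniform} in the weights $\boldsymbol\omega$, as emphasized in the discussion after Theorem~\ref{th:exist}: this requires that the spectral gap $\delta$ between $\lambda_K(c)$ and $\lambda_{K-1}(c)$ be bounded below independently of the interior weights, which is where \eqref{eq:degeneracy} and the standing assumption that $\omega_K$ is bounded below are genuinely needed, and one should double-check that the ratio $|b_j|/|b_K|$ inherits a weight-uniform bound from \eqref{eq:bound omega b} and Lemma~\ref{lem:bK} together. The remaining care is bookkeeping: verifying that $T$ really is constant on $z>0$ (which is exactly hypothesis (i) of the Framework, already in force here since we work with the decoupled equation \eqref{eq:decoupled} and the prescribed tumbling rate \eqref{eq:tumbling c}), and that $\lambda_+:=\lambda_K(c)$, $\mu_+:=\lambda_K(c)b_K$ match the constants announced in \eqref{eq:as mon}. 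The analogous statement for $z<0$ follows verbatim with $T_-$, $\lambda_{-K}(c)$, $a_{-K}$ in place of $T_+$, $\lambda_K(c)$, $b_K$, completing the proof of \eqref{eq:as mon} and hence of Theorem~\ref{th:exist}.
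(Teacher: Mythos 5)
Your proposal is correct and follows essentially the same route as the paper: differentiate the Case-mode decomposition \eqref{eq:decomposition}, use the dispersion relation \eqref{eq:dispersion} to evaluate the velocity sums, bound the subleading modes via \eqref{eq:bound omega b} together with the spectral gap $\lambda_{K-1}(c)-\lambda_K(c)>0$ coming from \eqref{eq:entrelacing+} and \eqref{eq:degeneracy}, and invoke Lemma \ref{lem:bK} so that dividing by $\lambda_K(c)b_K$ makes the relative error meaningful. The only (harmless) difference is that you apply the dispersion relation to every mode $j$, collapsing $dI/dz$ to $-\sum_j \lambda_j b_j e^{-\lambda_j z}$, whereas the paper applies it only to the leading mode $j=K$ and bounds the remaining terms crudely.
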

\begin{proof}
For $z>0$, we have,
\begin{align*}
& \dfrac{dI}{dz}(z) =   \sum_{k\in \mathcal K} \omega_{k} T_+(v_{k}-c)\partial_z f(z,v_{k}) \\
& = - \sum_{k\in \mathcal K} \omega_{k} T_+(v_{k}-c) \sum_{j = J+1}^{K} b_j \lambda_j(c) \dfrac{\exp(-\lambda_j(c) z)}{T_+(v_k - c) - \lambda_j(c) (v_k-c)} \\
& = - \lambda_K(c)  b_K \exp(-\lambda_K(c) z) \sum_{k\in \mathcal K }  \omega_{k} \dfrac{ T_+(v_{k}-c)}{T_+(v_k - c) - \lambda_K(c) (v_k-c)} \\
& \quad + K(K-J) \exp(-\lambda_{K-1}(c) z) \max_{(k,j)\in\mathcal K\times [J+1,K-1]} \left | \dfrac{\omega_k b_j \lambda_j(c) T_+(v_k-c)}{T_+(v_k - c) - \lambda_j(c) (v_k-c)} \right |\, .
\end{align*}
We deduce from the identity satisfied by $\lambda_K(c)$ \eqref{eq:dispersion} (for the first contribution), and from the bound \eqref{eq:bound omega b} together with the trivial inequality $\omega_k\leq \sqrt{\omega_k}$  (for the second contribution), that
\begin{equation*}
\dfrac{dI}{dz}(z)= - \lambda_K(c)  b_K \exp(-\lambda_K(c) z) + \mathcal O\left ( \exp(-\lambda_{K-1}(c) z) \right )\, .
 \end{equation*} 
The estimate \eqref{eq:quantitative} follows since $\lambda_K(c)$ and $b_K$ are positive, and also $\lambda_{K-1}(c)>\lambda_{K}(c)$ \eqref{eq:entrelacing+}.
\qed\end{proof}
This lemma concludes the proof of Theorem \ref{th:exist}. Before we move to the next Section, let us make the following important observations: 
\begin{enumerate}
\item The bound $b_K>0$ was carefully analysed in Lemma \ref{lem:bK}. It is uniform provided that $\omega_K$ is bounded below by some positive constant. The same observation holds true for the separation estimate $\lambda_{K-1}(c)>\lambda_{K}(c)$, by \eqref{eq:entrelacing+}-\eqref{eq:hlyupim}-\eqref{eq:degeneracy}
\item The bound $\lambda_K(c)>0$ will be analysed carefully in Lemma \ref{lem:confinement} below. 
\end{enumerate}
\qed\end{proof}

\section{Monotonicity of the spatial density}\label{sec:mono}

An important issue related to the existence of travelling waves for the coupled problem \eqref{eq:TW} is the monotonicity of the spatial density $\rho$. In this section, we establish the following result, without any restriction on the parameters, except \eqref{eq:cond chi}. 

\begin{theorem}\label{th:mono}
Let $c\in \mathcal C$. Let $f$ be the function defined in Theorem \ref{th:exist}. The spatial density $\rho(z) = \sum_{k\in \mathcal K} \omega_{k}   f(z,v_{k})$ changes monotonicity at $z=0$: it is increasing for $z<0$, and decreasing for $z>0$.  
\end{theorem}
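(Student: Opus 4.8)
The plan is to show that $\rho'(z)<0$ for all $z>0$ (the case $z<0$ being symmetric), using the Case's-mode decomposition \eqref{eq:decomposition} together with the asymptotic information \eqref{eq:as mon}/\eqref{eq:quantitative} as an anchor at $z=+\infty$. The natural object to control is not $\rho$ itself but the pair $(\rho, I)$, since differentiating the kinetic equation \eqref{eq:decoupled} and summing against the weights $\omega_k$ produces a closed relation between $\rho'$, $I'$ and the flux. First I would write, for $z>0$, the macroscopic identities obtained by taking moments of \eqref{eq:decoupled}: the zero-flux relation $\sum_k \omega_k (v_k-c) f(z,v_k) = 0$ (valid because $f$ is integrable and the flux is constant in $z$, hence $0$ by decay), its $z$-derivative $\sum_k \omega_k (v_k-c)\partial_z f(z,v_k) = 0$, and the balance $\sum_k \omega_k (v_k-c)\partial_z f = 0$ contrasted with $\rho'(z) = \sum_k \omega_k \partial_z f(z,v_k)$ and $I(z) = \sum_k \omega_k T_+(v_k-c) f(z,v_k)$. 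Using $T_+(v_k-c) = T_+^+ $ or $T_+^-$ according to the sign of $v_k-c$, one can split $\rho$ and $I$ into the incoming part ($v_k>c$) and outgoing part ($v_k<c$) and derive an ODE system in $z$ for these partial densities.

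The heart of the argument is a sign-propagation (maximum-principle) statement: I would define $g(z,v_k) := -\partial_z f(z,v_k)$ and show it is nonnegative for all $z>0$ after averaging, i.e.\ $-\rho'(z)\ge 0$. Differentiating \eqref{eq:decoupled} shows $g$ solves the same linear kinetic equation as $f$, namely $(v_k-c)\partial_z g(z,v_k) = \sum_{k'}\omega_{k'}T_+(v_{k'}-c)g(z,v_{k'}) - T_+(v_k-c)g(z,v_k)$ on $\{z>0\}$ (the coefficients are constant there since $S$ is unimodal and $N$ increasing). So $g$ admits the very same Case's-mode decomposition: $g(z,v_k) = \sum_{j=J+1}^K \lambda_j(c) b_j \exp(-\lambda_j(c)z)/(T_+(v_k-c)-\lambda_j(c)(v_k-c))$. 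The asymptotics \eqref{eq:asympt 2}, refined by Lemma~\ref{lem:bK} and Lemma~\ref{lem:asympt mon}, show that $g$ and $-\rho'$ are strictly positive for $z$ large. To push this down to all $z>0$ I would run a continuation/contradiction argument: suppose $z_0 = \sup\{z>0:\ \rho'(z)\ge 0\}>0$ or, better, that $-\rho'$ vanishes or changes sign somewhere on $(0,\infty)$; evaluate the moment identities at such a point and derive a contradiction with the zero-flux condition and the structure of the outgoing/incoming splitting. Concretely, one shows that at a first zero of $-\rho'$ coming from $+\infty$, the incoming partial density must have nonpositive derivative while the Duhamel formula \eqref{eq:duhamel1} forces the outgoing partial density to inherit a strict sign from $I'<0$, and the two are incompatible with $\rho'=0$.

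An alternative, and probably cleaner, route that I would pursue in parallel: work directly with $I$ rather than $\rho$. By \eqref{eq:as mon}, $I'(z)<0$ for $z$ large and $I'(z)>0$ for $z$ very negative (in the $z<0$ mode), and $I$ is continuous. If I can show $I$ has a \emph{single} critical point (at $z=0$ by symmetry of the construction — note the decomposition changes from the $a_j$-modes to the $b_j$-modes exactly at $z=0$), then the Duhamel formulas \eqref{eq:duhamel1}--\eqref{eq:duhamel2} express each $f(z,v_k)$ as a positively-weighted average of $I$ over a half-line or segment, and a convexity/rearrangement argument transfers unimodality of $I$ to unimodality of each $z\mapsto f(z,v_k)$ with maximum at $z=0$ — except this last transfer fails pointwise in $v_k$ (as the introduction warns), so instead I would transfer it in the averaged sense: $\rho'(z)$ has the sign of a positive combination of $I'$ evaluated along characteristics, all of which are negative for $z>0$. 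The key lemma to isolate is therefore: \emph{$I'(z)<0$ for every $z>0$.} This I would prove by the same kinetic maximum principle applied to $J(z):=I'(z)$ combined with the boundary anchor at $+\infty$ from Lemma~\ref{lem:asympt mon} and a one-sided bound at $z=0^+$ coming from comparing the $a$- and $b$-decompositions across the origin.

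The main obstacle I anticipate is precisely the behaviour near $z=0$: the coefficients $T_\pm(v_k-c)$ are discontinuous there (the transition point), so $f$ is only Lipschitz, $\rho'$ has a jump, and one cannot naively apply a smooth maximum principle across the origin. One must handle $\{z>0\}$ and $\{z<0\}$ separately and then glue using the continuity of $f$ and of $I$ at $z=0$ plus the jump relations for the derivatives; controlling the sign of $\rho'(0^+)$ and $\rho'(0^-)$ and showing they are compatible with a single maximum is the delicate point. A secondary difficulty is that individual velocity profiles $z\mapsto f(z,v_k)$ need not be monotone on each half-line, so the positivity must genuinely be extracted from the weighted average — which is why keeping the explicit orthogonality formulas \eqref{eq:ortho 2} and the weighted-$\ell^2$ identity from the proof of the bound on $b_i$ is essential: they give the exact cancellations that make $\sum_k \omega_k T_+(v_k-c)(\cdots)$ collapse onto the single slow mode with a sign.
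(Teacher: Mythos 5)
Your proposal contains several correct ingredients (the Duhamel representation, the need to extract the sign from the velocity average rather than pointwise in $v_k$, the delicacy at $z=0$), and your conditional claim ``if $I$ is monotone on each half-line then $\rho$ is strictly monotone'' is essentially the paper's enhancement-of-monotonicity step (Lemma \ref{lem:mon2}). But by itself that statement is circular, and neither of your two devices for breaking the circle works. The continuation in $z$ anchored at $+\infty$ cannot close: for $v_k>c$ the Duhamel formula \eqref{eq:duhamel2} propagates the incoming data at $z=0$ forward, so the sign of $\partial_z f(z,v_k)$ at any $z>0$ depends on the profile at the origin and hence on the whole left half-line, while the asymptotics \eqref{eq:as mon} only control the slowest mode; the signs of the remaining coefficients $b_j$ (and of the $a_j$) are genuinely unknown -- the paper explicitly declines to assign them any. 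Likewise, a ``kinetic maximum principle'' for $g=-\partial_z f$ presupposes a pointwise sign that is false in general: for $v_k>c$ one has $\partial_z f=(I-T_+^+f)/(v_k-c)$, and $I-T_+^+f(z,v_k)$ does change sign along the velocity profile -- this is exactly what Lemma \ref{lem:mon} (shape of the profile on the sublevel set $\{T_+^+f<I\}$) and the cumulative sums $H_k$ with the Abel summation in Lemma \ref{lem:mon2} are designed to handle. The sign comes from that combinatorial structure, not from the orthogonality/$\ell^2$ identities you invoke, which yield energy-type relations without signs. Finally, asserting that $I$ has a single critical point located at $z=0$ ``by symmetry of the construction'' assumes the conclusion.

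The idea missing from your outline is the paper's global device: a homotopy in the weights. One deforms $\boldsymbol\omega$ linearly to the two-velocity configuration $\left(\tfrac12,0,\dots,0,\tfrac12\right)$, for which the decomposition \eqref{eq:decomposition} reduces to a single decaying exponential on each side and the monotonicity is immediate; one then defines the set $\mathcal S$ of homotopy parameters for which $I^s$ has the appropriate monotonicity and shows it is open (strictness from Lemma \ref{lem:mon2}, together with the uniform quantitative bounds of Section \ref{sec:confinement}: lower bound on $b_K$, spectral gap $\lambda_{K-1}-\lambda_K$, and the uniform confinement Lemma \ref{lem:confinement}, which is where hypothesis \eqref{eq:cond chi} enters) and closed (continuity of $s\mapsto f^s$), concluding by connectedness. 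Without this (or some substitute mechanism turning the conditional enhancement into an unconditional statement), your argument does not go through.
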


Before presenting the proof of this statement, let us comment on the strategy. We present here the adaptation of the argument proposed in \cite{calvez_chemotactic_2016}, but in the case of finite velocities. The version developed here is much simpler because many regularity issues can be overcome. However, we insist on the fact that \cite{calvez_chemotactic_2016} does not readily contain the finite velocity case. 

\begin{figure}[t]
\begin{center}
\includegraphics[width = 0.8\linewidth]{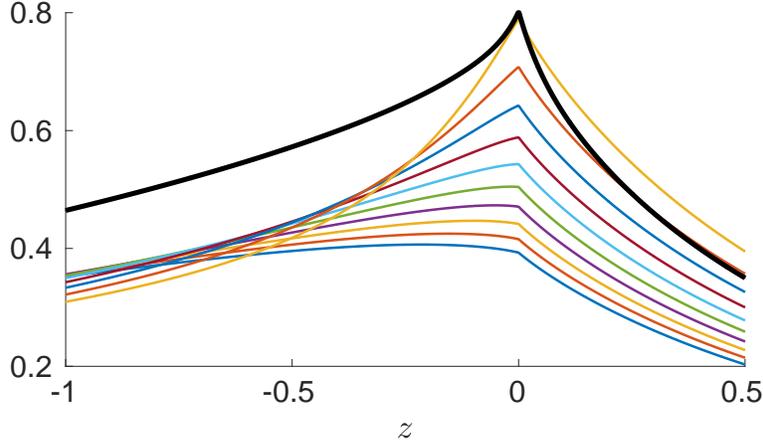}
\caption{\textbf{(Overshoot phenomena).} The spatial density is plotted in bold (black) line. It changes monotonicity at $z=0$, as claimed in Theorem \ref{th:mono}. The various functions $z\mapsto f(z,v_k)$ are plotted in coloured  lines for each $v_k<0$. For the most negative velocities, the maximum of the function is reached at some negative $z$. Here, $\chi_N= 0.2, \chi_S = 0.48$, and $c = 0.25$.}
\label{fig:overshoot}
\end{center}
\end{figure}

\begin{itemize}
\item
The first observation is that the monotonicity claimed in Theorem \ref{th:mono} is unlikely to be a straightforward consequence of the eigenmode decompositions \eqref{eq:decomposition}, as all terms are changing signs\footnote{Moreover, we have no argument so far to determine the signs of the coefficients $(a_j,b_j)$, if they have any.}. More strikingly, the functions $z\mapsto f(z,v_k)$ does not have the required monotonicity for fixed $v_k$, see Figure \ref{fig:overshoot}. Only the velocity average $\rho(z) = \sum \omega_k f(z,v_k)$ possesses the appropriate monotonicity.  
\item
As an alternative, we establish that the spatial density $\rho$ cannot change monotonicity as the weights $(\omega_k)$ vary continuously. This procedure is initialized with the case of two velocities only (all weights $\omega_k$ are set to zero except the extremal ones $\omega_K$). Monotonicity is obvious in the latter case since the solution is explicit as the concatenation of decaying exponential functions on each side \eqref{eq:decomposition}.  
\item
The key argument, that we called enhancement of monotonicity, resembles a maximum principle. Essentially, we prove that, if monotonicity of $\rho$ is appropriate, but in the large  (non decreasing for $z<0$, and non increasing for $z>0$), then monotonicity is necessarily strict, as stated in Theorem \ref{th:mono}. The proof goes through a refined description of the velocity distribution for any $z$. This central argument (Lemma \ref{lem:mon2}) is complemented with various compactness estimates, and some connectedness argument.  
\end{itemize}

\begin{proof}
Let denote $\mathcal K^+ = \{ k: v_k>c \}$ the set of positive relative velocities, and $\mathcal K^- = \{ k: v_k<c \}$ the set of negative relative velocities. We introduce 
\begin{equation*}
\begin{cases}
\rho^+(z) = \sum_{k\in \mathcal K^+} \omega_k f(z,v_k) \,,\medskip\\
\rho^-(z) = \sum_{k\in \mathcal K^-} \omega_k f(z,v_k) \,,\end{cases}
\end{equation*} 
as being  the contributions of positive and negative relative velocities to the spatial density, respectively. Finally, we recall the definition of the other macroscopic quantity of interest, which appears in the kinetic equation \eqref{eq:TW}: 
\begin{equation}\label{eq:def I 2}
I(z) = \sum_{k\in \mathcal K} \omega_{k} T(z,v_{k}-c) f(z,v_{k})\,.
\end{equation}
We make two key observations: 
\begin{itemize}
\item
Firstly, we have the following elementary reconstruction
\begin{equation*}
I(z) = \begin{cases}
T_-^- \rho^-(z) + T_-^+ \rho^+(z)  & \quad \text{for $z<0$}\, ,\medskip\\
T_+^- \rho^-(z) + T_+^+ \rho^+(z)  & \quad \text{for $z>0$}\, .
\end{cases}
\end{equation*}
Hence, identical monotonicity of both $\rho^+$ and $\rho^-$ implies the same monotonicity for $I$. 
\item
Secondly, the Duhamel formulation along characteristic lines \eqref{eq:duhamel1}-\eqref{eq:duhamel2} enables to reconstruct the kinetic density $f$ from the spatial density $I$.
We deduce the following important information from \eqref{eq:duhamel1} for negative relative velocities: \textbf{If $I$ is non increasing for $z>0$, and not constant\footnote{This is clearly the case due to integrability over $\mathbb{R}$.}, then $z\mapsto f(z,v_k)$ is decreasing for $z>0$ and $v_k<c$.}\\ 
As a by-product, we also deduce that 
$f(z,v_k)$ is increasing with respect to velocity on $\mathcal K^-$:
\begin{equation}\label{eq:cas -}
(\forall (v_i,v_j)\in \mathcal K^-\times \mathcal K^-) \quad (v_i < v_j) \Rightarrow f(z,v_i) < f(z,v_j) < \dfrac{I(z)}{T_+^-} \, . 
\end{equation}
\end{itemize}
Similar result holds true with opposite signs on $\{z<0\}\times \mathcal K^+$:
\begin{equation}\label{eq:cas - 2}
(\forall (v_i,v_j)\in \mathcal K^+\times \mathcal K^+) \quad (v_i < v_j) \Rightarrow \dfrac{I(z)}{T_-^+}  > f(z,v_i) > f(z,v_j) \, . 
\end{equation}
The next Lemma describes the shape of the velocity profiles on $\mathcal K^+$ for $z>0$. 

\begin{lemma}[Qualitative behaviour of the stationary density]\label{lem:mon}
Assume that $I$ is non increasing on $\{z>0\}$, and non decreasing on   $\{z<0\}$. Let $z>0$. For all $v_i>c$, we face the following alternative: either $T_+^+ f(z,v_i)\geq I(z)$, or for all $v_j>v_i$, $f(z,v_j) < f(z,v_i)$. Alternatively speaking, on the right hand side $(z>0)$, $f$ is decreasing with respect to velocity on the subset $\mathcal K^+\cap \{T_+^+ f< I\}$. 
\end{lemma}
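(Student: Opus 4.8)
The plan is to work directly from the Duhamel representation \eqref{eq:duhamel2} for positive relative velocities and to exploit the monotonicity hypothesis on $I$ together with the comparison \eqref{eq:cas -} already established for negative relative velocities. Fix $z>0$ and take two velocities $v_i, v_j$ with $c < v_i < v_j$. Writing \eqref{eq:duhamel2} for each of them, one has for $\ell\in\{i,j\}$
\begin{equation*}
f(z,v_\ell) = f(0,v_\ell)\exp\!\left(-\frac{zT_+^+}{v_\ell-c}\right) + \int_0^{z/(v_\ell-c)} I\!\left(z-s(v_\ell-c)\right)\exp\!\left(-sT_+^+\right)ds\,.
\end{equation*}
The first step is to change variables $y = s(v_\ell-c)$ in the integral so that the two expressions are written over a common range of the spatial variable; this turns the comparison into a statement about how the ``weight'' attached to the values $I(z-y)$, $y\in[0,z]$, redistributes as $v_\ell-c$ increases, plus the boundary term carrying $f(0,v_\ell)$. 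The key structural fact is that a larger relative velocity $v_\ell-c$ puts more mass on points $z-y$ that are \emph{further to the left} of $z$, where $I$ is larger (since $I$ is non increasing on $\{z>0\}$ and reconstructed continuously across $0$ where it is non decreasing on $\{z<0\}$), while simultaneously shrinking the coefficient $\exp(-zT_+^+/(v_\ell-c))$ in front of $f(0,v_\ell)$ and, by \eqref{eq:cas - 2} applied at $z=0$, $f(0,v_\ell)$ itself is \emph{decreasing} in $\ell$ on $\mathcal K^+$. So every ingredient pushes in the direction $f(z,v_j)\le f(z,v_i)$ — \emph{unless} the boundary contribution at $z$ were to dominate, which is precisely the escape clause $T_+^+ f(z,v_i)\ge I(z)$.

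The cleanest way to make this rigorous, which I would adopt, is to argue by contradiction via the pointwise a.e. differential system rather than by manipulating the integrals: suppose there is some $v_i>c$ with $T_+^+ f(z,v_i) < I(z)$ and some $v_j>v_i$ with $f(z,v_j)\ge f(z,v_i)$. Consider the function $v\mapsto f(z,v)$ on the relevant nodes and look at the first node $v_j > v_i$ (in increasing order) at which $f(z,v_j)\ge f(z,v_i)$; along the way use the transport equation \eqref{eq:decoupled}, rewritten as $(v_k-c)\partial_z f(z,v_k) = I(z) - T_+^+ f(z,v_k)$ for $z>0$ and $v_k>c$, to track the sign of $\partial_z f(z,v_k)$. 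Evaluate this at the point in question: if $T_+^+ f(z,v_i)<I(z)$ then $\partial_z f(z,v_i)>0$, i.e. $f(\cdot,v_i)$ is still increasing at $z$; combining with the fact that all velocity profiles on $\mathcal K^+$ start (at $z=0$) in the decreasing order \eqref{eq:cas - 2} and that the larger-velocity profiles relax faster (the factor $T_+^+/(v_k-c)$ is larger), one derives that the ordering $f(z,v_j)<f(z,v_i)$ cannot have been broken, a contradiction. The statement ``$f$ is decreasing w.r.t. velocity on $\mathcal K^+\cap\{T_+^+f<I\}$'' is then just the contrapositive packaging of this dichotomy.

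The main obstacle I anticipate is making the ``larger velocity relaxes faster, hence the order is preserved'' step airtight: the profiles $f(\cdot,v_k)$ for different $k\in\mathcal K^+$ are coupled only through the common source $I(z)$, and $I$ itself depends on all of them, so one cannot naively invoke a scalar comparison principle. The fix is to notice that in the Duhamel formula the coupling enters \emph{only} through the already-known non-increasing function $I$ on $\{z>0\}$ (hypothesis of the Lemma), so for the purpose of this Lemma $I$ may be treated as a given datum; then the comparison between $f(z,v_i)$ and $f(z,v_j)$ reduces to comparing the two explicit quantities above, and one needs the monotone-rearrangement inequality: for a non-increasing integrable kernel, $\int_0^{z/(v-c)} I(z-s(v-c))e^{-sT_+^+}ds$ minus $f(0,v)e^{-zT_+^+/(v-c)}$ is monotone in $v-c$ in the required sense \emph{as long as} $I(z) > T_+^+ f(z,v)$, which is exactly where that inequality gets used — it guarantees $\partial_z f(z,v)\ge 0$, closing the loop. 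I would isolate this monotone-kernel computation as the technical heart and keep the rest as bookkeeping over the finitely many nodes.
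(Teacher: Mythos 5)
Your overall strategy is close in spirit to the paper's, but as written there is a genuine gap at the decisive step. Both your Duhamel comparison and your contradiction argument use the hypothesis $T_+^+ f(z,v_i) < I(z)$ only at the single point $z$: it gives $\partial_z f(z,v_i)>0$ there, but what the proof actually needs is the sign of $I - T_+^+ f(\cdot,v_i)$ (equivalently of $\partial_z f(\cdot,v_i)$) on the \emph{whole} interval $(0,z)$, because the difference of two velocity profiles is reconstructed by integrating along a characteristic from $0$ up to $z$. The paper supplies exactly this through a separate invariance step: writing $\mathfrak f = T f$ and combining \eqref{eq:duhamel2} with the non-increase of $I$ on $\{z>0\}$, identity \eqref{eq:decay space} shows that if $\mathfrak f(z_0,v_k)>I(z_0)$ then $\mathfrak f(z_0+h,v_k)>I(z_0+h)$ for all $h>0$; hence $\mathfrak f(z,v_i)<I(z)$ forces $\mathfrak f(z-y,v_i)<I(z-y)$ for all $y\in(0,z)$. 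With this in hand the paper sets $g=(f(\cdot,v_j)-f(\cdot,v_i))/(v_j-v_i)$, which satisfies $(v_j-c)\,dg/dz+T_+^+ g = -\partial_z f(\cdot,v_i)$, and concludes $g(z)<0$ from $g(0)<0$ (your use of \eqref{eq:cas - 2} at the origin is the same) together with the nonpositive source on $(0,z)$. Your sketch never proves this propagation property, and without it the assertion that ``the ordering cannot have been broken'' does not follow: a priori the order could be lost at some $z_0\in(0,z)$ where $T_+^+ f(z_0,v_i)\ge I(z_0)$, and only the invariance step rules this scenario out.

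Two further points. First, your heuristic that larger-velocity profiles relax faster because ``the factor $T_+^+/(v_k-c)$ is larger'' is backwards: for $v_j>v_i>c$ one has $T_+^+/(v_j-c)<T_+^+/(v_i-c)$, so the larger velocity retains its datum at the origin longer; the correct proof does not rely on any such rate comparison. Second, the term-by-term comparison of the two Duhamel formulas after the change of variables $y=s(v_\ell-c)$ does not close either: $f(0,v_j)\le f(0,v_i)$ but $\exp\left(-zT_+^+/(v_j-c)\right)>\exp\left(-zT_+^+/(v_i-c)\right)$, so the boundary contributions are not ordered, which is precisely why the paper works with the difference $g$ and the sign of its source rather than with a monotone-rearrangement inequality.
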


\begin{proof}
We introduce the notation $\mathfrak{f} = T f$. 
From \eqref{eq:duhamel2}, we deduce
\begin{multline}
\mathfrak f(z+h,v_k) - I(z+h) = \left ( \mathfrak f(z,v_k) - I(z) \right ) \exp\left (  -\frac{ h T_+^+}{v_k-c} \right ) \\ + \left ( I(z) - I(z+h) \right ) \exp\left (  -\frac{ h T_+^+}{v_k-c} \right )
\\  +  \int_0^{\frac{h}{v_k-c}} \left (I(z + h -s(v_k-c)) - I(z+h)\right ) \exp\left ( -s T_+^+ \right )\, .\label{eq:decay space}
\end{multline}
As a consequence, if $\mathfrak f(z,v_k) > I(z)$, then  for all $h>0$, $\mathfrak f(z+h,v_k) > I(z+h)$, because the last two contributions in \eqref{eq:decay space} are non negative by assumption. On the contrary, if $\mathfrak f(z,v_k) < I(z)$, then  for all $y\in(0,z)$, $\mathfrak f(z-y,v_k) < I(z-y)$.

To characterize the monotonicity with respect to velocity, we get back to the kinetic transport formulation \eqref{eq:TW}. Let $v_i>c$ such that $\mathfrak f(z,v_i) < I(z)$. Let $v_j>v_i$. We have the following differential equality,
\begin{equation*}
(v_j-c)\partial_z\left( f(z,v_j) - f(z,v_i)\right ) + (v_j-v_i)\partial_z f(z,v_i) = - T_+^+\left ( f(z,v_j) -  f(z,v_i)\right )\, .
\end{equation*}
Denoting by $g(z) = \frac{f(z,v_j) - f(z,v_i)}{v_j-v_i}$, we have:
\begin{equation*}
(v_j-c) \frac{d g}{dz} (z) + T_+^+ g(z) = - \partial_z f(z,v_i) =   \dfrac{1}{v_i-c}\left ( \mathfrak f(z,v_i) -  I(z) \right )\,.
\end{equation*}
We deduce,
\begin{multline*}
g(z) = g(0) \exp\left (  -\frac{ z T_+^+}{v_j-c} \right ) \\
 + \dfrac{1}{v_i-c} \int_0^{\frac{z}{v_j-c}}  \left [ \mathfrak f(z-s(v_j-c),v_i) - I(z-s(v_j-c)) \right ] \exp\left ( -s T_+^+ \right )\, ds\, .
\end{multline*}
Both contributions in the right hand side are negative. Indeed, we have $g(0)<0$ by \eqref{eq:cas - 2}, and  $\mathfrak f(z-y,v_i) < I(z-y)$ for $y\in(0,z)$ since $\mathfrak f(z,v_i) < I(z)$. As a conclusion, we have
\begin{equation*}
(\mathfrak f(z,v_i) < I(z)) \quad \Rightarrow \quad (\forall v_j>v_i)\quad f(z,v_j) < f(z,v_i)\, .
\end{equation*}
\qed\end{proof}

The properties stated in Lemma \ref{lem:mon} enable to decipher the compensations in the velocity average $\rho(z) = \sum_{k\in \mathcal K} \omega_{k}   f(z,v_{k})$, yielding appropriate monotonicity for $\rho$.

\begin{lemma}[Enhancement of monotonicity]\label{lem:mon2}
Assume that $I$ is non increasing on $\{z>0\}$, and non decreasing on   $\{z<0\}$. Then both $\rho^+$ and $\rho^-$ are decreasing on  $\{z>0\}$, and both $\rho^+$ and $\rho^-$ are increasing on  $\{z<0\}$. 
\end{lemma}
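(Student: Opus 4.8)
The plan is to prove Lemma~\ref{lem:mon2} by working on the side $\{z>0\}$ (the case $\{z<0\}$ being symmetric, with $T_+$ replaced by $T_-$ and the roles of $\mathcal K^+$ and $\mathcal K^-$ exchanged). First I would differentiate $\rho^-(z) = \sum_{k\in\mathcal K^-}\omega_k f(z,v_k)$ using the observation recorded just before Lemma~\ref{lem:mon}: since $I$ is non increasing and non constant on $\{z>0\}$, each $z\mapsto f(z,v_k)$ with $v_k<c$ is (strictly) decreasing on $\{z>0\}$ by the Duhamel formula~\eqref{eq:duhamel1}. Summing with the positive weights $\omega_k$ immediately gives that $\rho^-$ is decreasing on $\{z>0\}$. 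So $\rho^-$ is the easy half and needs essentially no new work.

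The real content is the monotonicity of $\rho^+$. Here I would exploit the structural dichotomy of Lemma~\ref{lem:mon}: fix $z>0$ and split $\mathcal K^+$ into $A(z) = \{k\in\mathcal K^+ : T_+^+ f(z,v_k)\geq I(z)\}$ and $B(z) = \{k\in\mathcal K^+ : T_+^+ f(z,v_k) < I(z)\}$. From~\eqref{eq:decay space} in the proof of Lemma~\ref{lem:mon}, membership in $A$ is ``forward-stable'' (if $\mathfrak f(z,v_k)>I(z)$ then $\mathfrak f(z+h,v_k)>I(z+h)$ for all $h>0$) and membership in $B$ is ``backward-stable''; moreover Lemma~\ref{lem:mon} says $f(z,\cdot)$ is decreasing in velocity on $B(z)$, while on $A(z)$ it is increasing in velocity by~\eqref{eq:cas - 2}'s analogue — actually more simply, the set $A(z)$ consists of an upper interval of velocities and $B(z)$ a lower interval, because of the velocity-monotonicity statements combined with the threshold $I(z)/T_+^+$. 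I would then differentiate $\rho^+$ directly via the transport equation~\eqref{eq:TW}: for $v_k>c$,
\begin{equation*}
(v_k-c)\partial_z f(z,v_k) = I(z) - T_+^+ f(z,v_k) = I(z) - \mathfrak f(z,v_k),
\end{equation*}
so that
\begin{equation*}
\frac{d\rho^+}{dz}(z) = \sum_{k\in\mathcal K^+}\omega_k\,\frac{I(z)-\mathfrak f(z,v_k)}{v_k-c}.
\end{equation*}
The terms indexed by $B(z)$ have $I-\mathfrak f>0$ and $v_k-c>0$, hence are positive — these are the ``bad'' terms. The terms indexed by $A(z)$ have $I-\mathfrak f\leq 0$, hence are negative or zero. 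The point is to show the negative contributions dominate.

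The main obstacle is precisely this domination estimate, i.e.\ controlling the sum over $B(z)$ by the sum over $A(z)$. The idea I would pursue is to not work with $\rho^+$ alone but to bring in the already-established decay of $\rho^-$ and of $I$: since $I = T_+^-\rho^- + T_+^+\rho^+$ on $\{z>0\}$ and both $I$ and $\rho^-$ are non increasing, one already knows $\rho^+ = (I - T_+^-\rho^-)/T_+^+$ is a difference of a non increasing function and a non increasing function, which is not conclusive — so a finer argument is needed. The cleanest route is to return to the Duhamel representation~\eqref{eq:duhamel2} for $v_k>c$ and differentiate it: $\partial_z f(z,v_k)$ is expressed through $\partial_z I$ (which is non positive on $\{z>0\}$) plus boundary terms at $z=0$; one then needs to check that the weighted velocity-average of these boundary contributions has the right sign, using the shape information from Lemma~\ref{lem:mon} at $z=0$ together with~\eqref{eq:cas - 2}. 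I expect that combining (a) the velocity-ordering on $A(0)$ and $B(0)$, (b) the fact that $A(z)$ grows and $B(z)$ shrinks as $z$ increases, and (c) the zero-flux identity $\sum_k \omega_k(v_k-c)f(0,v_k)=0$ — which ties together the positive-velocity and negative-velocity boundary data — will be exactly what is needed to close the sign of $d\rho^+/dz$. Once both $\rho^+$ and $\rho^-$ are shown decreasing on $\{z>0\}$, applying the mirror argument on $\{z<0\}$ finishes the lemma, and then $\rho = \rho^+ + \rho^-$ inherits the monotonicity, which is Theorem~\ref{th:mono}.
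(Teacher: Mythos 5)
Your handling of $\rho^-$ is correct and essentially the paper's (the paper sums the termwise inequality \eqref{eq:cas -} in the identity $\frac{d\rho^-}{dz}(z)=\sum_{k\in\mathcal K^-}\omega_k\frac{I(z)-T_+^-f(z,v_k)}{v_k-c}$; your use of the Duhamel monotonicity is equivalent). But for $\rho^+$ you stop exactly where the lemma's content begins: you reduce to the sign of $\sum_{k\in\mathcal K^+}\omega_k\frac{I(z)-T_+^+f(z,v_k)}{v_k-c}$, observe that the terms with $T_+^+f<I$ are positive, call the needed domination "the main obstacle", and then only express the expectation that differentiating \eqref{eq:duhamel2} and combining boundary terms at $z=0$ with the zero-flux identity "will be exactly what is needed". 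That is not an argument, and it is aimed at the wrong identity: the paper never invokes the flux condition at $z=0$ here, but rather the pointwise-in-$z$ identity $\sum_{k\in\mathcal K}\omega_k\bigl(I(z)-T(z,v_k-c)f(z,v_k)\bigr)=0$, i.e.\ the definition \eqref{eq:def I 2} of $I$. The decisive step is a summation by parts: set $H_k(z)=\sum_{j=J+1}^{k}\omega_j\bigl(I(z)-T_+^+f(z,v_j)\bigr)$. By Lemma \ref{lem:mon} the increments change sign at most once, from negative to positive, as $k$ increases, so $(H_k)$ first decreases then increases; since $H_J=0$ and $H_K(z)=-\sum_{k\in\mathcal K^-}\omega_k\bigl(I(z)-T_+^-f(z,v_k)\bigr)<0$ by \eqref{eq:cas -}, every intermediate $H_k$ is negative, and then
\[
\dfrac{d\rho^+}{dz}(z)=\sum_{k=J+1}^{K}\dfrac{H_k-H_{k-1}}{v_k-c}=\sum_{k=J+1}^{K-1}\left(\dfrac{1}{v_k-c}-\dfrac{1}{v_{k+1}-c}\right)H_k+\dfrac{H_K}{v_K-c}<0,
\]
because the multipliers $1/(v_k-c)$ are positive and decreasing. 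Without this (or an equivalent) rearrangement argument, your proof of the $\rho^+$ half is missing.

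There is also a concrete error in the structural claim you do make: the ordering of your sets $A(z)$ and $B(z)$ is reversed. By Lemma \ref{lem:mon}, if $T_+^+f(z,v_i)<I(z)$ then $f(z,v_j)<f(z,v_i)<I(z)/T_+^+$ for every $v_j>v_i$, so $B(z)=\{T_+^+f<I\}$ is upward closed in velocity: $B(z)$ is the \emph{upper} interval and $A(z)$ the \emph{lower} one, and nothing in the paper gives that $f$ is increasing in velocity on $A(z)$ (your appeal to an "analogue" of \eqref{eq:cas - 2} is unfounded, since that estimate concerns $z<0$). This orientation is not a cosmetic detail: it is precisely because the negative increments occur at the small velocities and the positive ones at the large velocities, while the multipliers $1/(v_k-c)$ decrease in $k$, that the summation by parts above produces the correct sign; with your ordering the same computation would fail.
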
 

\begin{proof}
We compute first the derivative of $\rho^-$ on $\{z>0\}$:
\begin{equation*}
\dfrac{d\rho^-}{dz}(z) =  \sum_{k\in \mathcal K^-} \omega_k \partial_z f(z,v_k)  =  \sum_{k\in \mathcal K^-} \omega_k \dfrac{I(z) - T_+^- f(z,v_k)}{v_k-c}\, . 
\end{equation*}
We deduce from \eqref{eq:cas -} that each term in the sum is negative, because $v_k<c$ there. 
On the other hand, the derivative of $\rho^+$ reads as follows,
\begin{equation*}
\dfrac{d\rho^+}{dz}(z) =  \sum_{k\in \mathcal K^+} \omega_k \partial_z f(z,v_k)  =  \sum_{k\in \mathcal K^+} \omega_k \dfrac{I(z) - T_+^+ f(z,v_k)}{v_k-c}\, . 
\end{equation*}
The key observation is that we can omit the decreasing weights $(v_k-c)^{-1}$ in the last sum: 
\begin{equation*}
\dfrac{d\rho^+}{dz}(z) \leq  \sum_{k\in \mathcal K^+} \omega_k \left (I(z) - T_+^+ f(z,v_k)\right ) = - \sum_{k\in \mathcal K^-} \omega_k \left (I(z) - T_+^- f(z,v_k)\right ) < 0 \, . 
\end{equation*}
The last identity is a consequence of the very definition of $I$ \eqref{eq:def I 2}. In order to establish the inequality, we notice that the cumulative sum 
\begin{equation*}
H_k(z) = \sum_{j = J+1}^k \omega_j   \left (I(z) - T_+^+ f(z,v_j)\right ) \,,
\end{equation*}
satisfies the following properties:
\begin{equation*}
H_{J}(z) = 0\, , \quad \text{and}\quad (\forall k>J)\quad H_k(z) < 0\, .
\end{equation*}
Indeed, for fixed $z>0$, the sequence $\left (\omega_j   \left (I(z) - T_+^+ f(z,v_j)\right )\right )_{j\in \mathcal K^+}$ has the following pattern: the terms are first negative, then possibly positive. The reason is that the sequence is increasing as soon as it becomes positive (Lemma \ref{lem:mon}). Hence, the sequence $(H_k(z))_{k\in \mathcal K^+}$ has the following pattern: it is first decreasing, then possibly increasing. 
As the overall sum $H_K(z)$ is negative by the very definition of $I$, we deduce that each intermediate cumulative summation gives a negative value: for all $k\in \mathcal K^+$, $H_k(z)<0$. Consequently, we have after summation by parts,
\begin{align*}
\dfrac{d\rho^+}{dz}(z)  & =  \sum_{k = J+1}^K \dfrac{1}{v_k-c} \left ( H_{k} - H_{k-1}\right ) \\
& = \sum_{k = J+1}^{K-1} \left (\dfrac{1}{v_k-c} - \dfrac{1}{v_{k+1}-c}\right )   H_{k}(z)  + \dfrac{H_{K}(z)}{v_K-c} < 0\, . 
\end{align*}  
\qed\end{proof}

Lemma \ref{lem:mon2} is very useful to prove the monotonicity result stated in Theorem \ref{th:mono}. To this end, we make the weights $(\omega_k)$ vary continuously from any initial configuration $ \boldsymbol\omega^0 = \omega = (\omega_k)$ to the final state $\boldsymbol\omega^1 = \left (\frac12,0,\dots,0,\frac12\right )$. For $s\in[0,1]$, let define $f^s$ the solution of \eqref{eq:TW} associated with the weight $\boldsymbol\omega^s = (1-s) \boldsymbol\omega^0 + s \boldsymbol\omega^1$, having normalized mass \eqref{eq:unit}. 

Standard arguments enable to prove that the map $s\mapsto f^s$ is continuous for the topology of uniform convergence: Firstly,  the solution of \eqref{eq:TW} is unique for a given set of weights. Secondly, the function is uniformly bounded and Lipschitz continuous. Lastly, it is uniformly small outside a compact interval $[-L,L]$, see Section \ref{sec:confinement}. 

These statements require some justifications, based on the results established in Section \ref{sec:confinement}. As discussed previously, we are able to prove in a quantitative way the asymptotic behaviour \eqref{eq:asympt 1}, provided that we can bound $b_K$ and $\lambda_K(c)$ from below, as well as the spectral gap $\lambda_{K-1}(c)-\lambda_K(c)>0$. The bounds on  $b_K$ and $\lambda_{K-1}(c)-\lambda_K(c)$ rely on the non degeneracy of the extremal weight $\omega_K$. This is guaranteed uniformly along the sequence of weights $\omega^s$, as we have, by definition, 
\begin{equation*}
(\forall s\in [0,1])\quad \omega^s_K \geq \min\left (\omega_K, \frac12\right )\, .
\end{equation*}
The bound on $\lambda_K(c)$ requires some additional argument, related to the confinement property. Indeed, the confinement by biased velocity-jump processes is equivalent to the inequality $c<c^\star$. However, $c$ is fixed here, but $c^\star$ implicitly depends on the weights $\omega^s$, which are not constant. Thus, it is mandatory to guarantee that confinement holds true along the sequence of weights $(\omega^s)$. This is the purpose of the next lemma. Let us emphasize that we use here the condition $\chi_N\leq \chi_S$ \eqref{eq:cond chi}. We believe this restriction is not needed to ensure the final result. However, it simplifies the proof, as the sequence of weights must be redefined in a delicate way if $\chi_N>\chi_S$. In particular, the final state should not charge the extremal velocities $v_{\pm K}$, but rather some intermediate weights.

\begin{lemma}[Uniform confinement along the sequence]
\label{lem:confinement}
Under condition \eqref{eq:cond chi}, the eigenvalue $\lambda_K(c)$ is uniformly bounded from below, uniformly for $s$ in $[0,1]$. 
\end{lemma}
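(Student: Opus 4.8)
The goal is to show that the confinement threshold $c^\star$, now thought of as a function $c^\star(\boldsymbol\omega^s)$ of the interpolated weights, stays uniformly bounded away from $c$ along the path $s\mapsto \boldsymbol\omega^s$; equivalently, that $\lambda_K(c)>0$ uniformly, since $\lambda_K(c)>0$ is precisely the condition $c<c^\star$ (see \eqref{eq:def c+}, \eqref{eq:entrelacing+}). I would work directly with the defining identity \eqref{eq:def c+}, viewing
\begin{equation*}
\Phi(c,\boldsymbol\omega) = \sum_{k\in\mathcal K} \omega_k \dfrac{v_k - c}{T_-(v_k-c)}
\end{equation*}
as a continuous function of $(c,\boldsymbol\omega)$ which is strictly decreasing in $c$ on the relevant range (each term $\frac{v_k-c}{T_-(v_k-c)}$ is decreasing in $c$ away from the singularity $v_k$, and $T_-$ takes only the two values $T_-^-,T_-^+$ depending on the sign of $v_k-c$, so the map is piecewise smooth and monotone). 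The quantity $\lambda_K(c)$ being bounded below is, via \eqref{eq:entrelacing+} and the monotonicity of \eqref{eq:dispersion bis} in $\lambda$, controlled from below by the gap $\Phi(c,\boldsymbol\omega^s) = \Phi(c,\boldsymbol\omega^s) - \Phi(c^\star,\boldsymbol\omega^s) > 0$ together with the derivative bound on $\Phi$ in $\lambda$; so it suffices to bound $\Phi(c,\boldsymbol\omega^s)$ from below by a positive constant independent of $s$.

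First I would record that at the endpoints the statement is clear: for $s=0$ it is the standing hypothesis $c\in\mathcal C$ for the weight $\boldsymbol\omega^0$, and for $s=1$ the two-velocity system is explicit, so $c<c^\star(\boldsymbol\omega^1)$ can be checked by hand. The real content is the interior. Here I would use condition \eqref{eq:cond chi}, $\chi_N\le\chi_S$, which guarantees $T_-^+ = 1-\chi_S-\chi_N \le 1-\chi_N+\chi_S = T_-$ for the favourable-$S$ directions and, more to the point, gives $\Phi(0,\boldsymbol\omega)\le 0$ for every $\boldsymbol\omega$ (the mean algebraic run length is non-positive at $c=0$, as noted in the Remark after \eqref{eq:def c-}); combined with strict monotonicity of $\Phi(\cdot,\boldsymbol\omega)$ in $c$ this already forces $c^\star(\boldsymbol\omega)>0$ for all $\boldsymbol\omega$ with $\omega_K>0$. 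To get a \emph{uniform} lower bound on $c^\star(\boldsymbol\omega^s)-c$, or rather on $\Phi(c,\boldsymbol\omega^s)$, I would argue by contradiction/compactness: the simplex of weights with $\omega_K\ge\min(\omega_K,\tfrac12)$ is compact, the map $s\mapsto\boldsymbol\omega^s$ has compact image in it, and $(c,\boldsymbol\omega)\mapsto\Phi(c,\boldsymbol\omega)$ is continuous; if $\inf_s \Phi(c,\boldsymbol\omega^s)=0$ then by compactness there is a limit weight $\bar{\boldsymbol\omega}$ on the path with $\Phi(c,\bar{\boldsymbol\omega})=0$, i.e. $c = c^\star(\bar{\boldsymbol\omega})$. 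The task is then to rule this out, i.e. to show $c<c^\star(\boldsymbol\omega^s)$ \emph{strictly} and quantitatively for every point of the path.

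The mechanism I expect to use for the quantitative bound is monotonicity of $c^\star$ along the specific path, or at least a one-sided estimate: moving mass from an interior velocity $v_k$ (with $|v_k|<v_K$) toward the extremal velocities $v_{\pm K}$ can only \emph{increase} the mean algebraic run length $\sum_k\omega_k\frac{v_k-c}{T_-(v_k-c)}$ at a fixed $c\in(0,v_K)$, because, for $c$ between two velocities, the function $v\mapsto\frac{v-c}{T_-(v-c)}$ is convex-like in the sense that its value at the extremes dominates appropriately — here I would check the sign of the relevant finite differences using only the two values $T_-^\pm$ of $T_-$ and the hypothesis $\chi_N\le\chi_S$. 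Hence $\Phi(c,\boldsymbol\omega^s)$ is monotone (or at worst bounded below by a convex combination of its values) along $s$, so it is bounded below by $\min(\Phi(c,\boldsymbol\omega^0),\Phi(c,\boldsymbol\omega^1))>0$, which is exactly the uniform bound needed; translating back through \eqref{eq:entrelacing+}--\eqref{eq:degeneracy} (with $\omega_K$ uniformly non-degenerate along the path) gives the uniform lower bound on $\lambda_K(c)$. \textbf{Main obstacle.} The delicate point is precisely the sign of the finite-difference inequality that makes $\Phi(c,\cdot)$ monotone along the path; $T_-(v-c)$ is only piecewise constant (it jumps at $v=c$), so $v\mapsto\frac{v-c}{T_-(v-c)}$ is not globally convex, and one must verify the inequality separately according to whether the interior velocities being depleted lie to the left or to the right of $c$ — and it is exactly here, as the paper warns, that $\chi_N\le\chi_S$ is used and without which the interpolation path would have to be rerouted through intermediate velocities.
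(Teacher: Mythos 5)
There is a genuine gap, on two counts. First, you are controlling the wrong dispersion sum. The eigenvalue $\lambda_K(c)$ is the smallest \emph{positive} root, associated with $T_+$: by the interlacing \eqref{eq:entrelacing+}, the existence of a root in $\bigl(0,\tfrac{T_+^+}{v_K-c}\bigr)$ — hence $\lambda_K(c)>0$ — is equivalent to the negativity of the right-side mean run length $\sum_k\omega^s_k\frac{v_k-c}{T_+(v_k-c)}<0$ \eqref{eq:confinement +}, i.e.\ $c>c_\star(\boldsymbol\omega^s)$ \eqref{eq:def c-}, and \emph{not} to $c<c^\star$ with the $T_-$-sum \eqref{eq:def c+}, which governs the left-side mode $\lambda_{-K}(c)$. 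This mix-up scrambles your signs: for symmetric weights the $T_-$-sum you call $\Phi$ satisfies $\Phi(0,\boldsymbol\omega)\ge 0$ (since $T_-^+<T_-^-$ unconditionally, by \eqref{eq:sign T}), whereas the Remark you invoke ($c_\star\le 0$ under \eqref{eq:cond chi}) concerns the $T_+$-sum; if $\Phi(0,\boldsymbol\omega)\le 0$ and $\Phi(\cdot,\boldsymbol\omega)$ were decreasing in $c$, you would get $c^\star\le 0$, contradicting the admissibility $0<c<c^\star$ you start from. Note also that \eqref{eq:cond chi} enters on the $T_+$ side, where it yields $T_+^-\le T_+^+$; on the $T_-$ side the corresponding inequality $T_-^+<T_-^-$ costs nothing.

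Second, and more importantly, the step you flag as the ``main obstacle'' is the entire content of the lemma, and you leave it unproven. The paper's proof is exactly the scheme you sketch, carried to completion on the correct sum: since $\boldsymbol\omega^s$ is affine in $s$, the run-length sum is affine in $s$; it is negative at $s=0$ because $c\in\mathcal C$ for $\boldsymbol\omega^0$; so it suffices to prove that the $s=1$ value does not exceed the $s=0$ value, namely \eqref{eq:comparison +},
\begin{equation*}
\frac12\left(\dfrac{v_K-c}{T_+^+}+\dfrac{v_{-K}-c}{T_+^-}\right)\;\leq\;\sum_{k\in\mathcal K}\omega_k\dfrac{v_k-c}{T_+(v_k-c)}\,,
\end{equation*}
which is obtained by factoring $1/T_+^+$ out of the terms with $k>J$ and $1/T_+^-$ out of those with $k\le J$, observing that the two bracketed factors are equal and non-negative (using only $\sum\omega_k=1$ and $\sum\omega_k v_k=0$), and then using $T_+^-<T_+^+$ — the one place where \eqref{eq:cond chi} is used. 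Your ``convexity-like finite-difference'' verification is precisely this computation and is never carried out; moreover the endpoint $s=1$ cannot be ``checked by hand'' independently of it, since whether the given $c$ (known only to be admissible for $\boldsymbol\omega^0$) remains admissible for the two-velocity system is exactly what the endpoint comparison delivers. The remaining ingredients of your plan — positivity for each $s$ upgraded to a uniform bound by continuity in $s$ and compactness of $[0,1]$, and non-degeneracy of $\omega_K^s$ — do match the paper and are fine.
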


\begin{proof}
As $s$ belongs to a compact interval, it is sufficient to establish that $\lambda_K(c)$ remains positive for all $s$. 
The confinement on the right hand side is guaranteed for all $s$, provided that the mean algebraic run length is negative \eqref{eq:confinement +}:
\begin{equation*}
\sum_{k\in\mathcal K} \omega^s_k \dfrac{v_k-c}{T_+(v_k-c)}<0\,.
\end{equation*}
As this expression is linear with respect to $s$, and negative at $s=0$ by assumption, it is sufficient to check that the final value at $s=1$ is smaller than the initial value at $s=0$. Hence, we are reduced to establish the following inequality:
\begin{equation}\label{eq:comparison +}
\frac12 \left ( \dfrac{v_K-c}{T_+^+} + \dfrac{v_{-K}-c}{T_+^-} \right )
\leq  \sum_{k\in\mathcal K} \omega_k \dfrac{v_k-c}{T_+(v_k-c)}
\end{equation} 
We may recombine this inequality by factoring out $1/T_+^+$, on the one hand, and $1/T_+^-$, on the other hand:
\begin{equation*}
\dfrac{1}{T_+^+}\left ( \frac12 (v_K-c) - \sum_{k>J} \omega_k (v_k-c)\right ) \leq  \dfrac{1}{T_+^-} \left (  - \frac12 (v_{-K}-c) + \sum_{k\leq J} \omega_k (v_k-c)\right )\, .
\end{equation*}
We claim that both factors are equal and non negative:
\begin{equation*}
\frac12 (v_K-c) - \sum_{k>J} \omega_k (v_k-c) = - \frac12 (v_{-K}-c) + \sum_{k\leq J} \omega_k (v_k-c) \geq  0\, .
\end{equation*} 
This is an immediate consequence of $\sum \omega_k  = 1$, and  $\sum \omega_k v_k = 0$. The positive sign can be viewed easily on the left hand side: $ \sum_{k>J} \omega_k (v_k-c) \leq  (v_K-c) \sum_{k>J} \omega_k \leq (v_K-c)/2$. We conclude that \eqref{eq:comparison +} holds true, since $T_+^- < T_+^+$ by \eqref{eq:cond chi} and \eqref{eq:sign T}. 

The same arguments lead to the opposite inequality on the left hand side, in order to ensure uniform confinement there as well:
\begin{equation*}
(\forall s)\quad \sum_{k\in\mathcal K} \omega^s_k \dfrac{v_k-c}{T_-(v_k-c)}>0\,.
\end{equation*}
\qed\end{proof}

We are now in position to conclude the proof of Theorem \ref{th:mono}. Lemma \ref{lem:asympt mon} is useful to compactify the space interval, since the appropriate monotonicity is guaranteed for $|z|>L$, uniformly with respect to $s\in [0,1]$. Let $\mathcal S$ be the set of values $s$ for which $I^s$ has the appropriate monotonicity:
\begin{equation}\label{eq:appropriate mono}
\mathcal S = \left\{ s\in [0,1] \;|\; (\forall z\neq 0)\;  (\mathrm{sign}\; z)\dfrac{dI^s}{dz}(z) \leq 0  \right\} \, .
\end{equation}
It is immediate to see that $\mathcal S$ contains the extremal value $s=1$, simply because the eigenmode decompositions \eqref{eq:decomposition} are reduced to a single element on each side. Alternatively speaking, the solution is an exponential function on each side, so it is monotonic. Confinement (Lemma \ref{lem:confinement}) guarantees that the exponential functions decay on both sides, so it has the appropriate monotonicity \eqref{eq:appropriate mono}. 

Theorem \ref{th:mono} is equivalent to say that the value $s=0$ belongs to $\mathcal S$, also. We argue by connectedness. 
\begin{itemize}
\item Firstly, $\mathcal S$ is open. Let $s_0\in \mathcal S$. Lemma \ref{lem:mon2} says that monotonicity is strict on both sides $\{z<0\}$, and $\{z>0\}$. We claim that there exist $\varepsilon_0>0$, and  a neighbourhood $\mathcal V_0$ of $s_0$ in $\mathcal S$, such that 
\begin{equation*}
(\forall s\in \mathcal V_0)\; (\forall z\in[-L,L]\setminus\{0\})\quad  (\mathrm{sign}\; z)\dfrac{dI^s}{dz}(z) < -\varepsilon_0\, .
\end{equation*}
We argue by contradiction: otherwise, there would exist a sequence $s_n\to s_0$, and a sequence $z_n$ with constant sign, converging towards some $z_0\in [-L,L]$, including value 0, such that   
\begin{equation*}
\lim_{n\to +\infty}(\mathrm{sign}\; z_n)\frac{dI^{s_n}}{dz}(z_n)= 0\, .
\end{equation*}
We can pass to the limit in the sequence of derivative functions $\frac{dI^{s_n}}{dz}$, uniformly over $[-L,0]$ or $[0,L]$\footnote{Notice that the value $z = 0$ is not an issue here. In fact, Lemma \ref{lem:mon2} includes the values $z = 0^+$ and $z = 0^-$ which must be distinguished from each other because $I$ is not continuous at $z = 0$.}. This yields a contradiction with the strict monotonicity at $s=s_0$. 
\item
Secondly, $\mathcal S$ is closed. This is an easy consequence of the continuity of the sequence $s\mapsto f^s$, for the topology of uniform convergence, which is   compatible with monotonicity properties.  \qed
\end{itemize}
\end{proof}

\section{Coupling with the reaction-diffusion equations: three examples}
\label{sec:EX}

As soon as the existence of a function $f$ is established for a given $c$, as in Theorem \ref{th:exist}, with appropriate monotonicity properties as in Theorem \ref{th:mono}, \textbf{the coupling with the reaction-diffusion equations through $\rho$ in the travelling wave problem \eqref{eq:TW} is essentially not sensitive to the topology of the velocity space} (discrete as in here, or continuous as in \cite{calvez_chemotactic_2016}). Therefore, we do not repeat the analysis performed in \cite{calvez_chemotactic_2016}, but we recall two useful propositions.

\begin{proposition}\label{prop:monotonicity S}
Assume that the function $\rho\in L^1$ is locally Lipschitz continuous on $\mathbb{R}^*$, and that it is increasing for $z<0$, and decreasing for $z>0$. Let $S$ be the unique solution of the following elliptic problem,
\[ (\forall z\in \mathbb R) \quad -c\partial_z S(z) - D_S \partial^2_z S(z) + \alpha S(z) = \beta \rho(z)\, . \]
Then $S$ is unimodal, meaning that $\partial_z S$ vanishes once, and only once.
\end{proposition}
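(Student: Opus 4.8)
\textbf{Proof plan for Proposition \ref{prop:monotonicity S}.}
The plan is to exploit the explicit representation of $S$ by a Green kernel and then compare the monotonicity of $S$ directly with that of $\rho$. First I would solve the linear ODE $-c\partial_z S - D_S \partial_z^2 S + \alpha S = \beta\rho$ on $\mathbb{R}$ by variation of constants: the characteristic equation $-D_S \mu^2 - c\mu + \alpha = 0$ has one positive root $\mu_+ > 0$ and one negative root $\mu_- < 0$, so the bounded solution is
\[
S(z) = \frac{\beta}{D_S(\mu_+-\mu_-)} \left( \int_{-\infty}^z e^{\mu_-(z-y)}\rho(y)\,dy + \int_z^{+\infty} e^{\mu_+(z-y)}\rho(y)\,dy \right),
\]
which is well defined because $\rho\in L^1$ and the exponential kernels are bounded. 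Differentiating under the integral sign (legitimate since $\rho$ is locally Lipschitz and integrable) gives
\[
\partial_z S(z) = \frac{\beta}{D_S(\mu_+-\mu_-)} \left( \mu_- \int_{-\infty}^z e^{\mu_-(z-y)}\rho(y)\,dy + \mu_+ \int_z^{+\infty} e^{\mu_+(z-y)}\rho(y)\,dy \right),
\]
the boundary terms from Leibniz's rule cancelling against each other.

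The next step is to show that $\partial_z S$ has exactly one sign change. I would integrate by parts in each integral to move the derivative onto $\rho$, using that $\rho$ decays (it is in $L^1$ and monotone on each half-line, hence tends to $0$ at $\pm\infty$): writing $\int_{-\infty}^z e^{\mu_-(z-y)}\rho(y)\,dy = \frac{1}{\mu_-}\rho(z) - \frac{1}{\mu_-}\int_{-\infty}^z e^{\mu_-(z-y)}\partial_y\rho(y)\,dy$ and similarly on the right, the $\rho(z)$ terms combine and one obtains a representation of the form
\[
\partial_z S(z) = \frac{\beta}{D_S(\mu_+-\mu_-)} \left( \int_{-\infty}^z e^{\mu_-(z-y)}\rho'(y)\,dy + \int_z^{+\infty} e^{\mu_+(z-y)}\rho'(y)\,dy \right)
\]
up to checking the constant; here $\rho'>0$ on $\{y<0\}$ and $\rho'<0$ on $\{y>0\}$ by hypothesis. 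From this formula it is immediate that $\partial_z S(z) > 0$ for all $z\le 0$ (both integrands are then evaluated against a density that is positive on $(-\infty,z)\subset(-\infty,0)$ and — for the right integral when $z<0$ — changes sign, so this needs the refinement below) and $\partial_z S(z) < 0$ for all $z\ge 0$ by the symmetric argument. To handle the mixed range cleanly, the cleanest route is: for $z\le 0$ the left integral is over a region where $\rho'>0$, and for the right integral split at $0$; one shows the positive contribution from $(z,0)$ dominates using $\mu_+>0>\mu_-$ and $|\mu_-|$ versus $\mu_+$ — alternatively, argue that $\partial_z S$ itself solves $-c\,\partial_z S -D_S\partial_z^2(\partial_z S)+\alpha\,\partial_z S = \beta\rho'$, and apply the maximum principle: on $\{z<0\}$ the right-hand side $\beta\rho'$ is positive, so $\partial_z S$ cannot attain a nonpositive interior minimum, and since $\partial_z S\to 0$ at $-\infty$, we get $\partial_z S>0$ on $(-\infty,0)$; symmetrically $\partial_z S<0$ on $(0,+\infty)$; continuity of $\partial_z S$ at $0$ (from the elliptic regularity, $S\in C^1$) then forces $\partial_z S(0)=0$, and this is the unique zero.

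I expect the main obstacle to be the bookkeeping at $z=0$: $\rho$ is only locally Lipschitz on $\mathbb{R}^*$ and $\rho'$ may jump at the origin, so one must be careful that $S\in C^1(\mathbb{R})$ (indeed $S\in W^{2,\infty}_{loc}$ by the elliptic equation since $\rho\in L^\infty$), that the maximum principle applies separately on the two open half-lines, and that the sign information patches together through the single point $z=0$ to give \emph{exactly one} zero of $\partial_z S$ rather than a whole interval of zeros — which is ruled out because $\partial_z S$ is strictly positive, resp. strictly negative, on the open half-lines. The decay of $\rho$ at infinity, needed both to justify the Green representation and to run the maximum principle, follows from $\rho\in L^1$ together with eventual monotonicity, so no extra hypothesis is required.
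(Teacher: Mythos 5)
There is a genuine error, and it is not a technicality: you prove a \emph{stronger} statement than Proposition \ref{prop:monotonicity S}, and that stronger statement is false. You conclude that $\partial_z S>0$ on $(-\infty,0)$, $\partial_z S<0$ on $(0,+\infty)$, hence $\partial_z S(0)=0$. But $\partial_z S(0)$ is exactly the quantity $\Upsilon(c)$ defined in \eqref{eq:upsilon}, and the whole point of the paper (and the remark immediately following the proposition) is that the unique critical point of $S$ is \emph{not} located at the origin in general: one must tune $c$ so that $\Upsilon(c)=0$, and Section \ref{sec:EX} exhibits parameter sets where $\Upsilon$ changes sign, has two admissible roots, or is negative on the whole admissible range (Section \ref{sec:EX3}). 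If your conclusion held, then $\Upsilon\equiv 0$ and every admissible speed would produce a travelling wave, contradicting Figures \ref{fig:upsilon-onewave}--\ref{fig:upsilon-nowave}. The proposition only claims that $\partial_z S$ vanishes once and only once, at some point $\bar z\in\mathbb R$ which generically differs from $0$; pinning $\bar z$ to the origin is precisely the remaining equation that selects the wave speed, not a consequence of the monotonicity of $\rho$.

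The place where your argument breaks is the maximum-principle step on the half-lines. Writing $w=\partial_z S$, the inequality $-cw'-D_Sw''+\alpha w=\beta\rho'>0$ on $\{z<0\}$ indeed forbids an \emph{interior} nonpositive minimum of $w$ on $(-\infty,0)$, but the infimum of $w$ over this half-line can perfectly well be approached at the finite boundary point $z=0^-$, where no sign information is available; so you cannot conclude $w>0$ on $(-\infty,0)$ (and symmetrically on the right). The same defect affects your kernel computation: the representation $\partial_z S(z)=\frac{\beta}{D_S(\mu_+-\mu_-)}\bigl(\int_{-\infty}^z e^{\mu_-(z-y)}\rho'(y)\,dy+\int_z^{+\infty}e^{\mu_+(z-y)}\rho'(y)\,dy\bigr)$ is correct (up to a possible jump term at $y=0$, since $\rho$ is only assumed Lipschitz on $\mathbb R^*$), but the ``dominance'' of the positive contribution you invoke for $z<0$ is simply not true for all parameter values, as the counter-example of Section \ref{sec:EX3} shows ($\partial_z S(0)<0$ even for $c=0$). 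Your ingredients can still be salvaged into a correct proof of the actual statement: from the two one-sided principles (no nonpositive interior minimum of $w$ for $z<0$, no nonnegative interior maximum for $z>0$), together with $w\to0$ at $\pm\infty$ and $\int_{\mathbb R}w=0$ (since $S\to0$ at $\pm\infty$ and $S>0$), every sign pattern of $w$ with two or more sign changes, or with a touching zero inside a half-line, is excluded, leaving exactly one zero at some $\bar z$ — but its location must be left free. (Note also that the present paper does not reprove this proposition; it is recalled from \cite{calvez_chemotactic_2016}.)
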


\begin{proposition}\label{prop:N}
Assume that $c> 0$, and that $\rho$ is exponentially bounded on both sides, namely
\begin{equation*}
\begin{cases}
(\forall z<0) \quad & \rho(z) \leq C \exp\left (\lambda_- z\right )\medskip\\
(\forall z>0) \quad & \rho(z) \leq C \exp\left ( -\lambda_+ z\right )
\end{cases}
\end{equation*}
Then, there exist two constants $N_-,N_+$, ordered as $0<N_-<N_+$, and a solution $N$ of the following elliptic problem 
\begin{equation*}
- c\partial_z N(z)  - D_N \partial_{z}^2 N(z) = - \gamma \rho(z) N(z)\,,
\end{equation*}
such that 
\[
\begin{cases}
\displaystyle\lim_{z\to -\infty} N(z) = N_-\medskip\\
\displaystyle\lim_{z\to +\infty} N(z) = N_+
\end{cases}
\]
Moreover, for all $z$ in $\mathbb{R}$, $\partial_z N(z)>0$.  
\end{proposition}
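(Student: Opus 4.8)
\textbf{Plan for the proof of Proposition \ref{prop:N}.} The equation is linear in $N$ once $\rho$ is fixed, so the natural strategy is to exhibit a positive solution by a sub/supersolution argument on a sequence of truncated intervals, then pass to the limit. First I would rewrite the ODE as $D_N N'' + c N' = \gamma \rho N$ and observe that, since $c>0$ and $\rho\geq 0$, the constant functions are subsolutions (the right-hand side $-\gamma\rho N$ makes $0$ a subsolution) while any positive constant is a supersolution. More precisely, fix boundary values $N(-R)=1$, $N(R)=M$ on $[-R,R]$ for a large constant $M$ to be chosen, and solve the linear two-point boundary value problem; the maximum principle (valid because the zeroth-order coefficient $\gamma\rho\geq 0$ has the good sign) gives a unique solution $N_R$ with $0<N_R\leq M$ and, by comparing with monotone barriers, $N_R' > 0$ throughout. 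I would also record the a priori bound $1 \le N_R \le M$ independently of $R$.

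The second step is to let $R\to+\infty$. Because $\rho\in L^1$ and is exponentially decaying on both sides, the ``total damping'' $\int_{\mathbb R}\gamma\rho\,dz$ is finite; this is what forces $N_R'$ to have a finite, strictly positive limit at each end and prevents $N_R$ from growing unboundedly or collapsing to a constant. Using the uniform $L^\infty$ bound together with interior elliptic (ODE) estimates, I would extract a subsequence converging in $C^1_{loc}$ to a solution $N$ of the equation on all of $\mathbb R$ with $N'\geq 0$. The monotone limits $N_\pm := \lim_{z\to\pm\infty} N(z)$ then exist. To see $N'>0$ strictly everywhere (not merely $\geq 0$), apply the strong maximum principle / Hopf-type argument to $N'$, which satisfies $D_N (N')'' + c (N')' - \gamma\rho\, N' = \gamma \rho' N$; alternatively, if $N'(z_0)=0$ at some point, the first-order linear ODE satisfied by $w=N'$ (after dividing through) forces $w\equiv 0$, hence $N$ constant, hence $\rho N\equiv$ const, contradicting $\rho\in L^1$ with $N$ bounded below by a positive constant. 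This also yields $N_- < N_+$.

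Finally, to pin down $0 < N_-$: integrating the equation once gives $D_N N'(z) + c N(z) = c N_- + D_N N'(-\infty) + \gamma\int_{-\infty}^z \rho N\,dz$, and letting $z\to+\infty$ shows $c(N_+ - N_-) = \gamma\int_{\mathbb R}\rho N\,dz - D_N(N'(+\infty)-N'(-\infty))$; since the left side is finite and $\rho N \ge 0$, both $N'(\pm\infty)$ are finite, and in fact $N'(+\infty)=N'(-\infty)=0$ because $N'\in L^1$ would otherwise be incompatible with $N$ having finite limits — so one gets the clean identity $c(N_+-N_-)=\gamma\int_{\mathbb R}\rho N$. Normalizing, e.g., $N_-=1$ by the freedom in the boundary data (the equation being homogeneous in $N$) secures $N_->0$; rescaling then gives the general statement.

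\textbf{Main obstacle.} The delicate point is the passage to the limit $R\to\infty$: one must show the family $\{N_R\}$ does not degenerate — i.e. that the limit is genuinely nonconstant with $N_- \neq N_+$ — which is exactly where the $L^1$ (and exponential) decay of $\rho$ is essential, since it guarantees the integrated source $\gamma\int\rho N$ is finite and positive. Controlling $N_R$ from below away from $0$ uniformly in $R$, and simultaneously from above, so that the $C^1_{loc}$ limit inherits strict monotonicity and finite distinct limits, is the technical heart; the rest is standard maximum-principle and elliptic-estimate bookkeeping.
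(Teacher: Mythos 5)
The paper does not reprove this proposition (it is simply recalled from \cite{calvez_chemotactic_2016}), so the comparison point is the standard direct ODE argument for this one-dimensional linear equation. Measured against that, your truncation-plus-compactness scheme has genuine gaps at exactly the places you lean on the maximum principle. First, the claimed a priori bound $1\le N_R\le M$: the operator is $D_N\partial_z^2+c\partial_z-\gamma\rho$, whose zeroth-order coefficient is $\le 0$, so positive constants are \emph{super}solutions, not subsolutions; the constant $1$ does not bound $N_R$ from below (for such operators solutions can dip under the boundary minimum, e.g.\ $u''-u=0$ on $[-a,a]$ with $u(\pm a)=1$ gives $u=\cosh z/\cosh a<1$ inside). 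Second, the claim $N_R'>0$ ``by comparing with monotone barriers'' is not an argument, and is in fact false in general: with Dirichlet data at both ends the truncated solution generically develops a boundary layer at $z=-R$ (the homogeneous mode $e^{-cz/D_N}$ grows to the left), and unless $M$ is taken above the a priori unknown limit value $N_+$, $N_R$ decreases in that layer. Third, your strict-monotonicity argument in the limit is also flawed: $w=N'$ solves a \emph{second}-order equation with source $\gamma\rho' N$ whose sign changes, so neither the Hopf-type argument nor the ``$N'(z_0)=0\Rightarrow N'\equiv0$'' step goes through. In short, the uniform lower bound, the non-degeneracy of the limit and the strict monotonicity — which you yourself identify as the technical heart — are left unproved.

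The missing idea that closes everything at once is to integrate with the factor $e^{cz/D_N}$: the equation reads $\bigl(D_N e^{cz/D_N}N'(z)\bigr)'=\gamma e^{cz/D_N}\rho(z)N(z)\ge 0$, and selecting the solution with $e^{cz/D_N}N'(z)\to0$ and $N(z)\to N_->0$ as $z\to-\infty$ gives the Volterra representation
\begin{equation*}
N(z)=N_-+\frac{\gamma}{D_N}\int_{-\infty}^{z}e^{-cy/D_N}\int_{-\infty}^{y}e^{cs/D_N}\rho(s)N(s)\,ds\,dy\,,
\end{equation*}
solvable by Picard iteration. Fubini yields $N(z)\le N_-+\frac{\gamma}{c}\int_{-\infty}^{z}\rho N$, hence $N\le N_-e^{\gamma/c}$ by the normalization \eqref{eq:unit} (Gronwall); positivity of the integrand together with $\rho>0$ gives $N'>0$ everywhere (here strict monotonicity comes from the vanishing limit of the increasing quantity $e^{cz/D_N}N'$ at $-\infty$, not from a maximum principle on $N'$); the monotone bounded limit $N_+$ exists; and integrating the equation over $\mathbb{R}$, using the exponential bounds on $\rho$ to get $N'(\pm\infty)=0$, gives the flux identity $c(N_+-N_-)=\gamma\int_{\mathbb{R}}\rho N>0$, whence $N_-<N_+$. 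This is where $c>0$ and the decay of $\rho$ enter, and it dispenses entirely with truncation, compactness and barriers. Your identity $c(N_+-N_-)=\gamma\int\rho N$ is correct and is indeed the right way to see $N_-<N_+$; the route to it just needs the explicit integration rather than the comparison arguments as you stated them.
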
 

We deduce from these two propositions that the framework emphasized in the Introduction is consistent, except that in point (i) it is required that the maximum of $S$ is located at $z=0$. The latter requirement is not true in general. In fact, it is intuitively clear that the location of the maximum depends on the value of $c$, which is the main unknown of the problem \eqref{eq:TW}. 

The last step of the construction of travelling waves consists in varying $c$ to meet this last requirement about the location of the maximum at the origin. This is equivalent to say that the function $\Upsilon $ \eqref{eq:upsilon} has a root $\mathbf{c}$ in the interval of admissible velocities $(c_\star,c^\star)\cap(0,c^\star)$\footnote{The restriction $c>c_\star$ is due to confinement on the right hand side, whereas the condition $c> 0$ is due to the arbitrary choice of the direction of propagation of the wave (here, left to right), which influences itself the monotonicity of $N$ (here, increasing). It is  an arbitrary choice, of course, since the problem is symmetric.}. In the present work, we have  $(c_\star,c^\star)\cap(0,c^\star) = (0,c^\star)$, because $c_\star\leq 0$ under condition \eqref{eq:cond chi}. 
 
\begin{svgraybox}
Surprisingly enough, neither the existence nor the uniqueness of a root $\mathbf{c}$ such that $\Upsilon(\mathbf{c}) = 0$ do hold for any set of parameters (this is the main difference between the kinetic problem, and its macroscopic diffusive limit, as in \cite{saragosti_mathematical_2010,calvez_chemotactic_2016}). 
\end{svgraybox}

Here, we present three examples of possible shapes for $\Upsilon$, based on accurate numerical simulations. It is important to notice that the function $\Upsilon$ is not smooth in the case of discrete velocities (as opposed to the continuous velocity case, see \cite{calvez_chemotactic_2016}). In fact, it has  jump discontinuities  located on the set of velocities $ (v_k)_{k\in \mathcal K} $.  A thorough analysis of the sign and the size of these jump discontinuities was performed in \cite{calvez_chemotactic_2016}, questing for counter-examples. Here, we bypass this analysis, and we present directly the counter-examples.

\begin{figure}[t]
\begin{center}
\includegraphics[width = .8\linewidth]{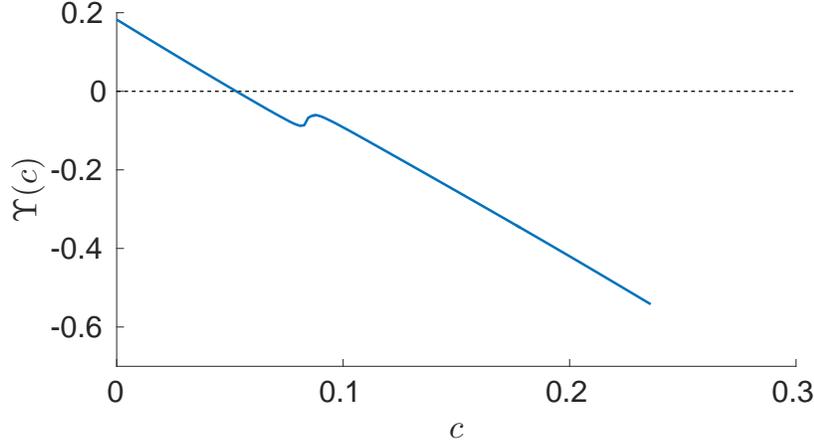}
\caption{Plot of the function $\Upsilon$ over $(0,c^\star)$ for the set of parameters described in Section \ref{sec:EX1}. We observe that the function is not globally monotonic. It is seemingly decreasing on each side of the positive jump discontinuity. However, this jump discontinuity is not large enough to imply the co-existence of two wave speeds. In this case, we observe the existence of a unique wave speed.}
\label{fig:upsilon-onewave}
\end{center}
\end{figure}

\subsection{Existence and uniqueness of the wave speed}  
\label{sec:EX1}

Here, we consider a first set of velocities and weights for which there is numerical evidence that $\Upsilon$ has a unique admissible root $\mathbf{c}$, as in Figure \ref{fig:upsilon-onewave}. 
The velocities are chosen as follows:
\begin{multline*}
v_1 =  0.0848 \,, v_2 =   0.2519  \,, v_3 =  0.4118 \,, v_4 =   0.5598 \,, v_5 =   0.6917 \,,\\ v_6 =   0.8037 \,, v_7 =   0.8926 \,, v_8 =   0.9558  \,, v_9 =  0.9916\,.
\end{multline*}
The weights are chosen as follows:
\begin{multline*}
w_1 =   0.0846  \,, w_2 =   0.0822 \,, w_3 =    0.0774 \,, w_4 =   0.0703   \,, w_5 =   0.0613\,, \\w_6 =    0.0505 \,, w_7 =   0.0382   \,, w_8 =   0.0249 \,, w_9 =     0.0108\,.
\end{multline*}
We set $\omega_0 = 0$ for numerical purposes. However, this does not affect the results of Section \ref{sec:confinement} and Section \ref{sec:mono}. 
The other parameters are: 
\begin{equation*}
\chi_S = 0.3\,, \chi_N = 0.15\,, D_S = 0.5\,, D_N = 1\,, \alpha = 0.5\,, \beta=1\,, \gamma=1\,.
\end{equation*}

\begin{figure}[t]
\begin{center}
\includegraphics[width = .8\linewidth]{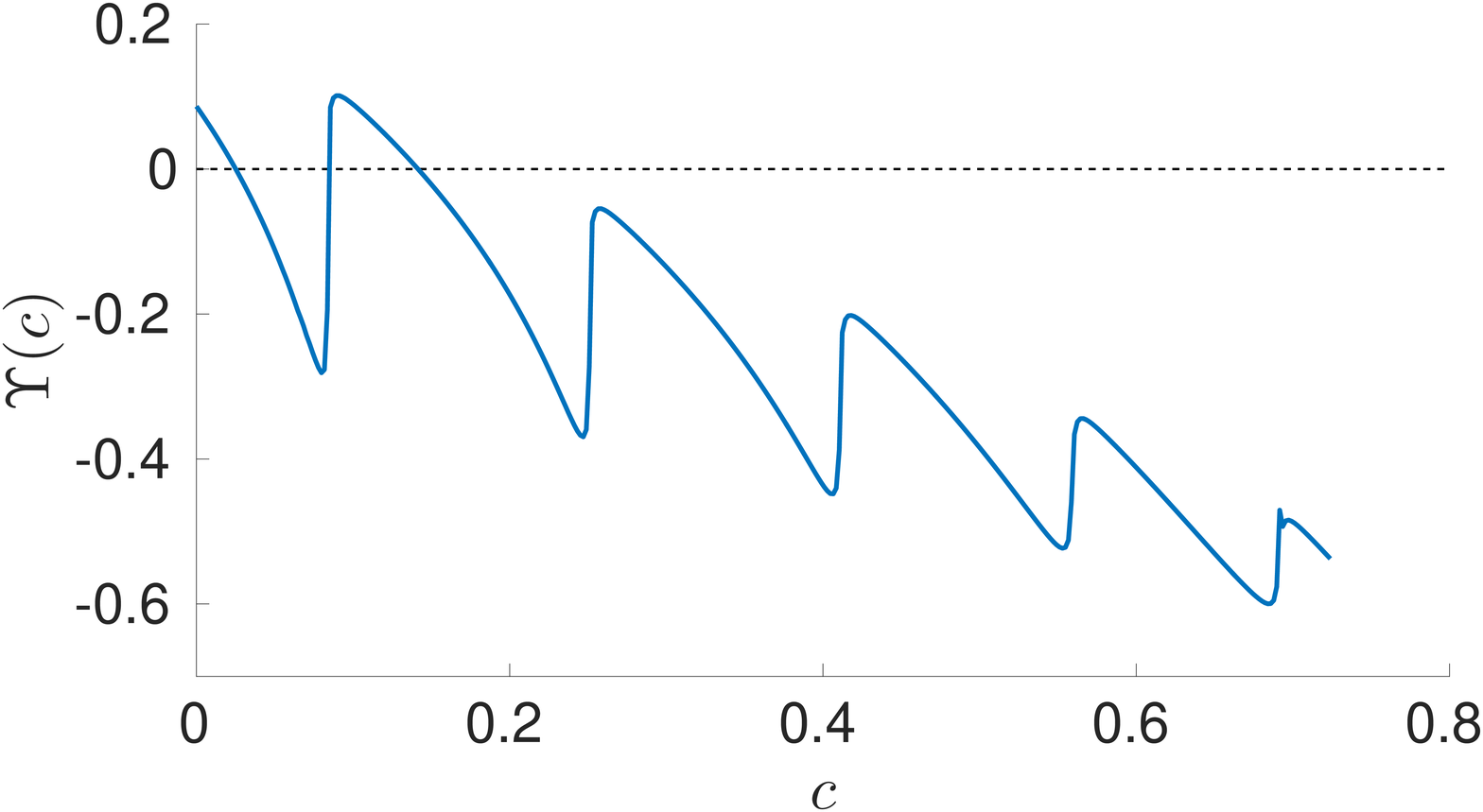}
\caption{Plot of the function $\Upsilon$ over $(0,c^\star)$ for the set of parameters described in Section \ref{sec:EX2}. We observe that the function $\Upsilon$ possesses two admissible roots, as it crosses the horizontal axis downwards. Note that the crossing upwards is not admissible as it corresponds to a jump discontinuity of $\Upsilon$.}
\label{fig:upsilon-multiplewave}
\end{center}
\end{figure}

\begin{figure}[t]
\begin{center}
\includegraphics[width = .45\linewidth]{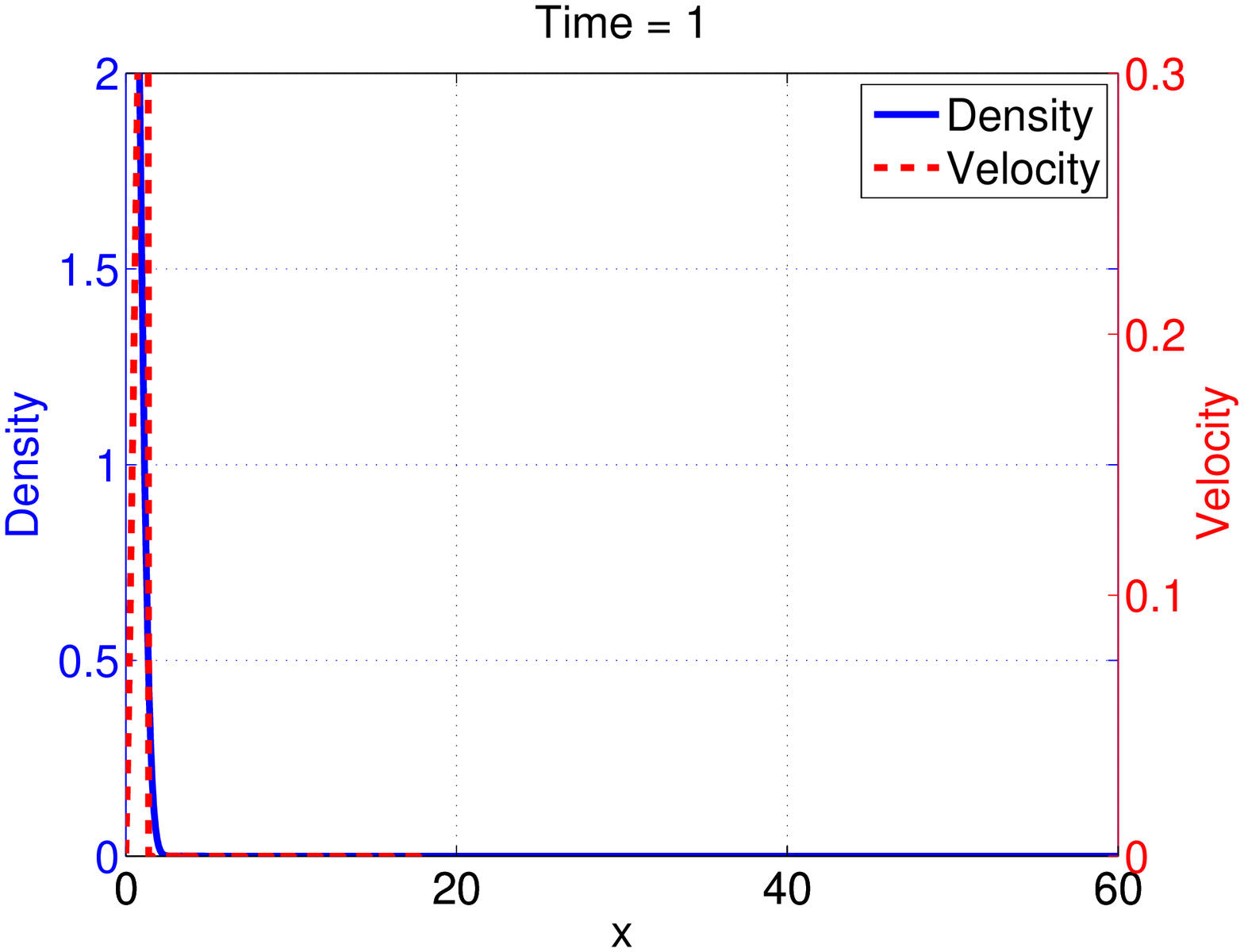}\;
\includegraphics[width = .45\linewidth]{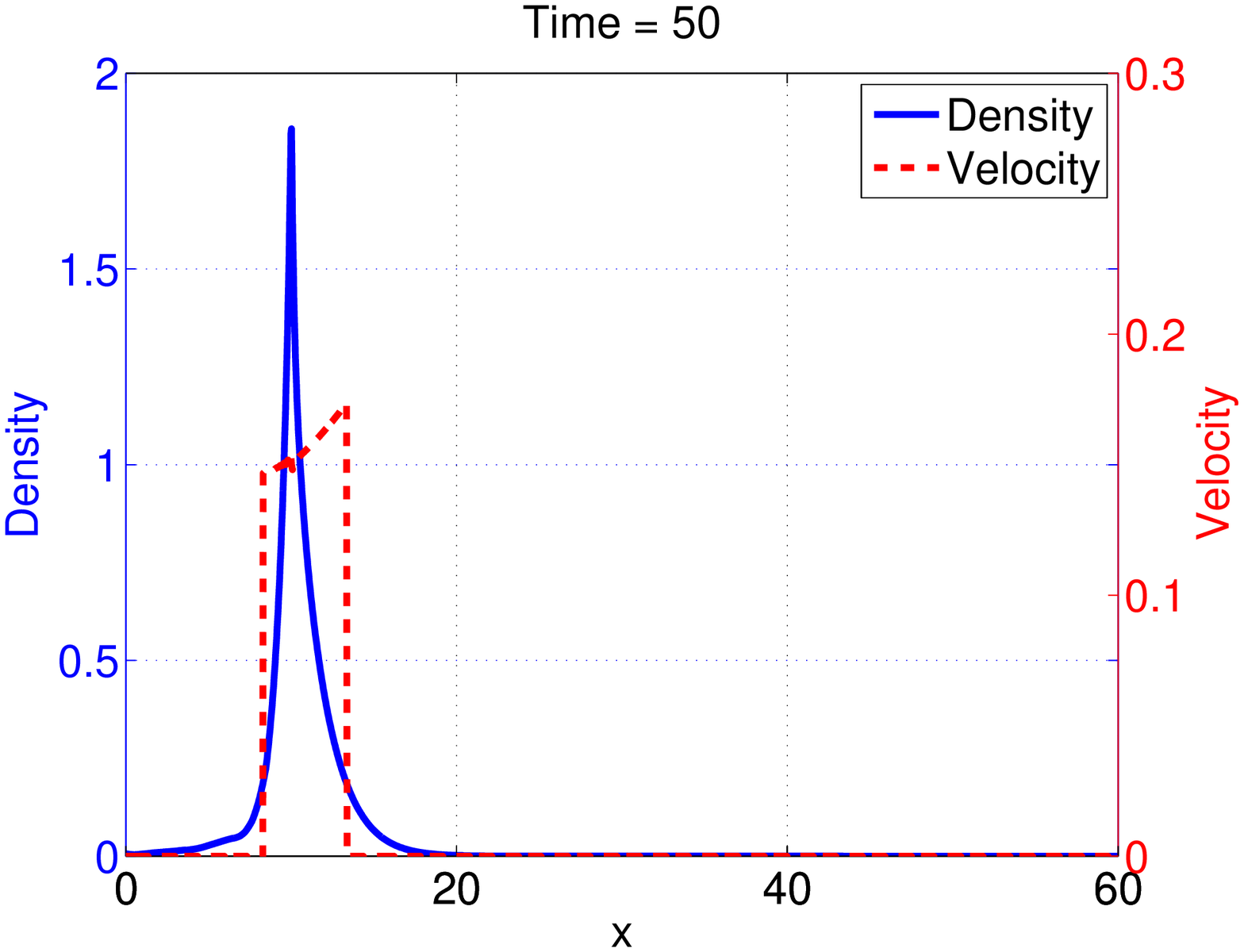}\\
\includegraphics[width = .45\linewidth]{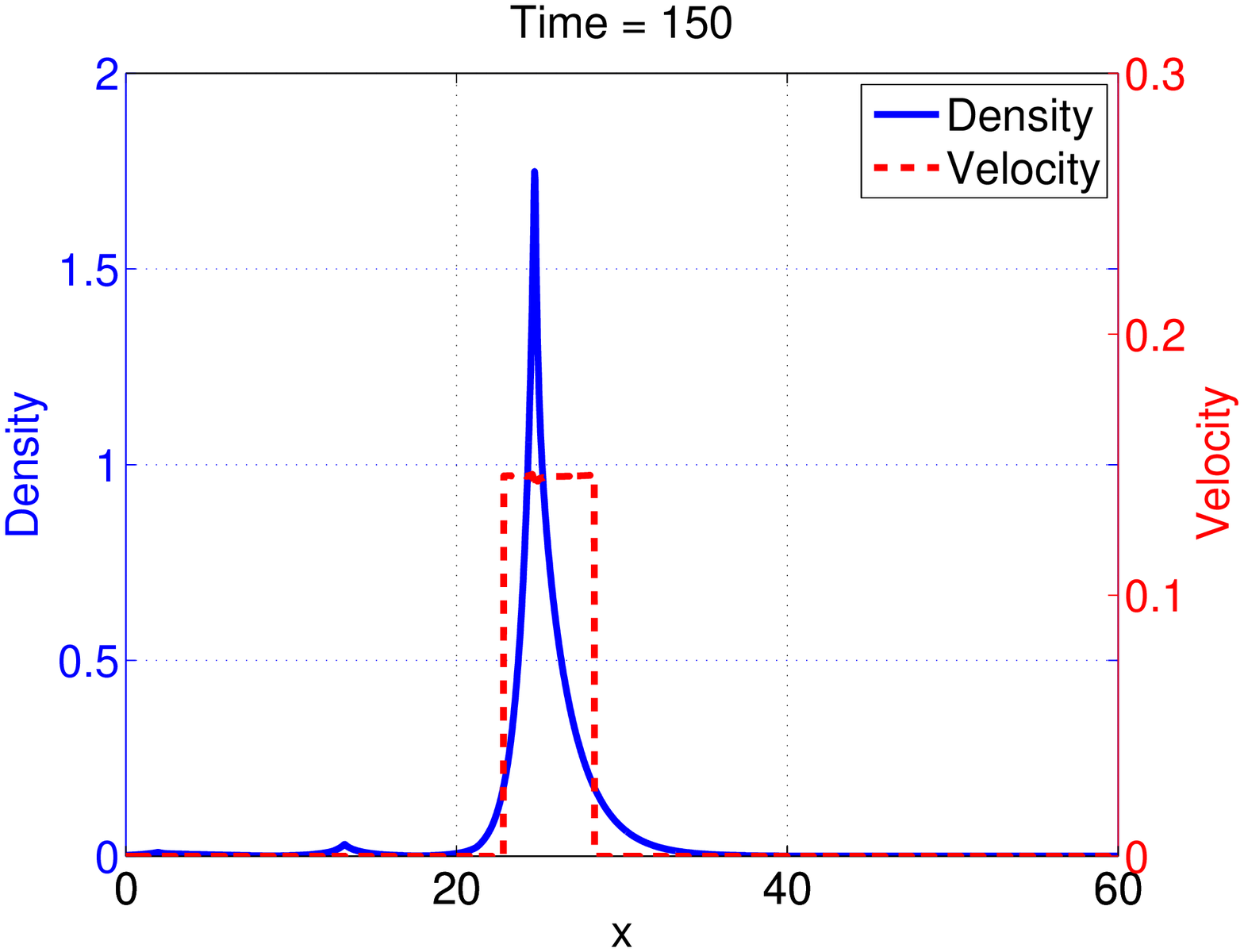}\;
\includegraphics[width = .45\linewidth]{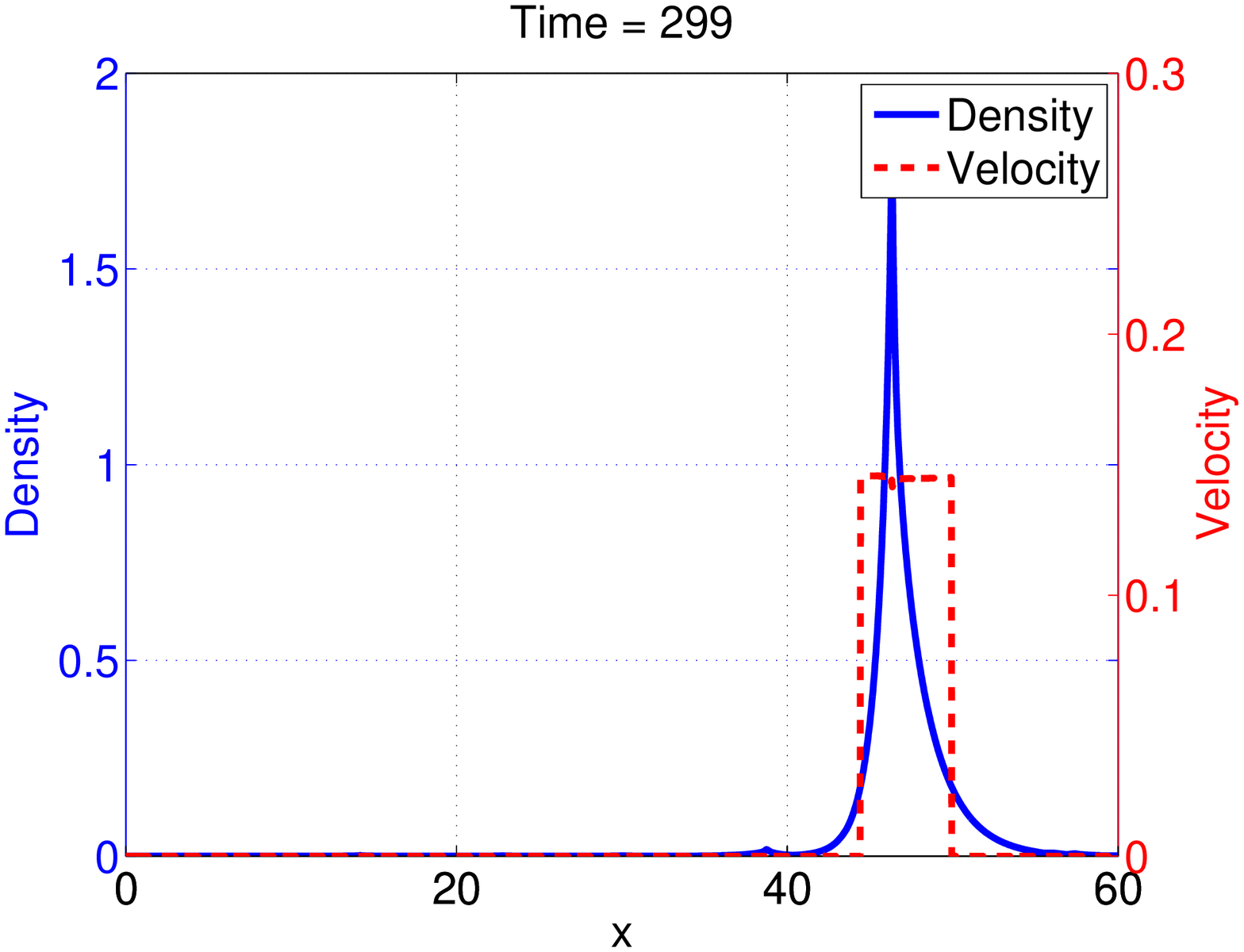}
\caption{Numerical simulations of the Cauchy problem in the case of multiple existence of travelling waves (second example in Section \ref{sec:EX2}). The formation of a stable   wave travelling at constant speed is clearly observed. It corresponds to the fastest wave: $\mathbf{c}\approx 0.15$ (compare with Figure \ref{fig:upsilon-multiplewave}).}
\label{fig:wave}
\end{center}
\end{figure}

\subsection{Non uniqueness of the wave speed}
\label{sec:EX2}

The second example consists in a set of velocities and weights, for which there is seemingly two possible wave speeds, as the function $\Upsilon$ crosses at least twice the zero axis on the set of admissible velocities, see Figure \ref{fig:upsilon-multiplewave}. There, we observe two crossing downwards, corresponding to two admissible wave speed. The vertical crossing upwards is not admissible as it corresponds to a jump discontinuity of $\Upsilon$ located at one of the discrete velocities $ (v_k)_{k\in \mathcal K} $. The parameters are the same as in Section \ref{sec:EX1}, except that the values of $\chi_S, \chi_N$ and $\alpha$ are replaced with: 
\[\chi_S = 0.5\,, \chi_N = 0.45\,, \alpha = 10\,.\]

Beyond the static analysis dealing exclusively with the construction of travelling wave solutions, we ran numerical simulations of the Cauchy problem \eqref{eq:meso model}. 
The well-balanced upwind numerical scheme is designed as follows: 
\begin{itemize}
\item The kinetic transport equation involves the setup of a formulation involving {\bf a scattering $S$-matrix at each cell's interface}, as presented in \cite{gosse_MMS} for slightly different kinetic models. Such a $S$-matrix is retrieved thanks to a Case's mode decomposition \eqref{eq:decomposition} associated with $c=0$, see \cite{companion} for details, \cite[Chapter 10]{gosse_computing_2013} and \cite{gosse_MBS,emako_well-balanced_2016} as well. 
\item Reaction-diffusion equations are treated by means of a ${\mathcal L}$-spline interpolation leading to accurate numerical fluxes where all the terms (diffusive, reactive, drift) can be treated as a whole, allowing for the preservation of a delicate balance between each other: see especially \cite{gosse_L}.
\end{itemize}

Results are shown in Figure \ref{fig:wave}.
Beginning with an initial data which is concentrated on the left-hand-side, with arbitrary shape, we observe the formation of a wave moving at constant speed over a reasonably long time span (Figure \ref{fig:wave}). The speed of propagation corresponds to the fastest of the two roots of $\Upsilon$, as seen on Figure \ref{fig:upsilon-multiplewave}.

A similar (but simpler) example of non uniqueness with only four velocities is extensively analysed in \cite{companion}. We conjecture that the two co-existing travelling waves are stable. Bistability is clearly shown using numerical simulations of the Cauchy problem with various initial data. 

\subsection{Non existence of the wave speed}
\label{sec:EX3}

\begin{figure}[t]
\begin{center}
\includegraphics[width = .8\linewidth]{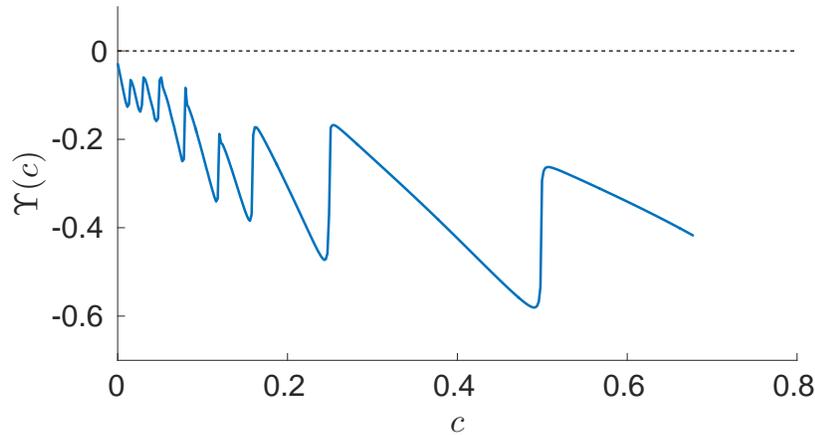}
\caption{Plot of the function $\Upsilon$ over $(0,c^\star)$ for the set of parameters described in Section \ref{sec:EX3}. We observe that the function is negative for all admissible $c$. This means that the maximum of the concentration $S$ is always located on the left side of the origin. Consequently, the construction of travelling wave cannot be achieved in this case.}
\label{fig:upsilon-nowave}
\end{center}
\end{figure}

\begin{figure}[t]
\begin{center}
\includegraphics[width = .45\linewidth]{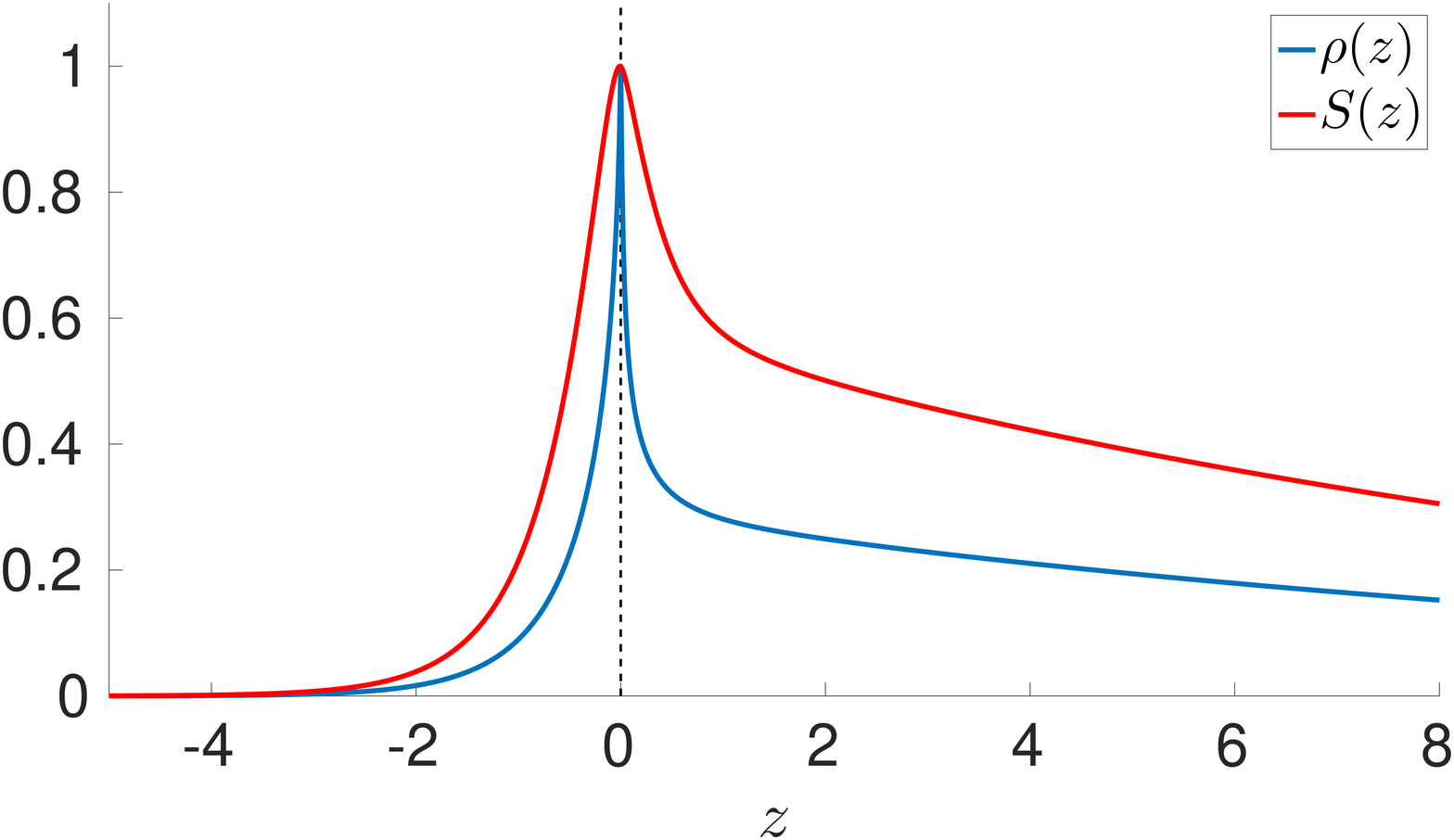}\;
\includegraphics[width = .45\linewidth]{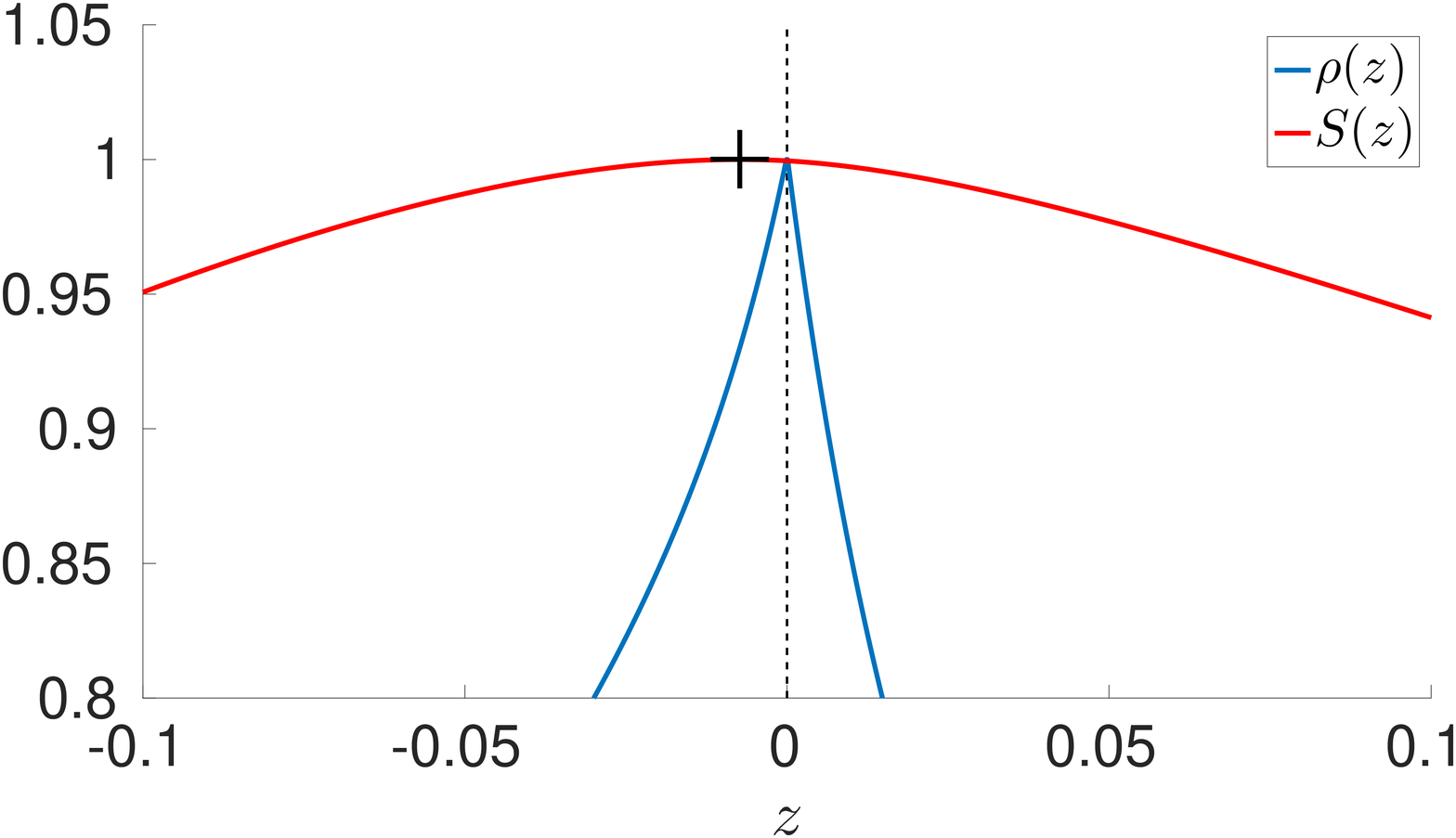}
\caption{Illustration of the counter-inutuitive phenomenon occurring in the third example (Section \ref{sec:EX3}). For $c = 0$, the cell density $\rho$ is tilted to the right side for large $|z|$, because the positive gradient of nutrient yields net biased motion in the right direction (compare $\rho(z)$ and $\rho(-z)$ for large $z\gg 1$). However, it is tilted to the left side for small $|z|\ll 1$. As a consequence, the chemical concentration $S$ reaches its maximum at a negative value, for a suitable choice of the reaction-diffusion parameters (see the zoom on the right). This prevents the existence of a travelling wave.}
\label{fig:counter-intuitive}
\end{center}
\end{figure}

\begin{figure}[t]
\begin{center}
\includegraphics[width = .45\linewidth]{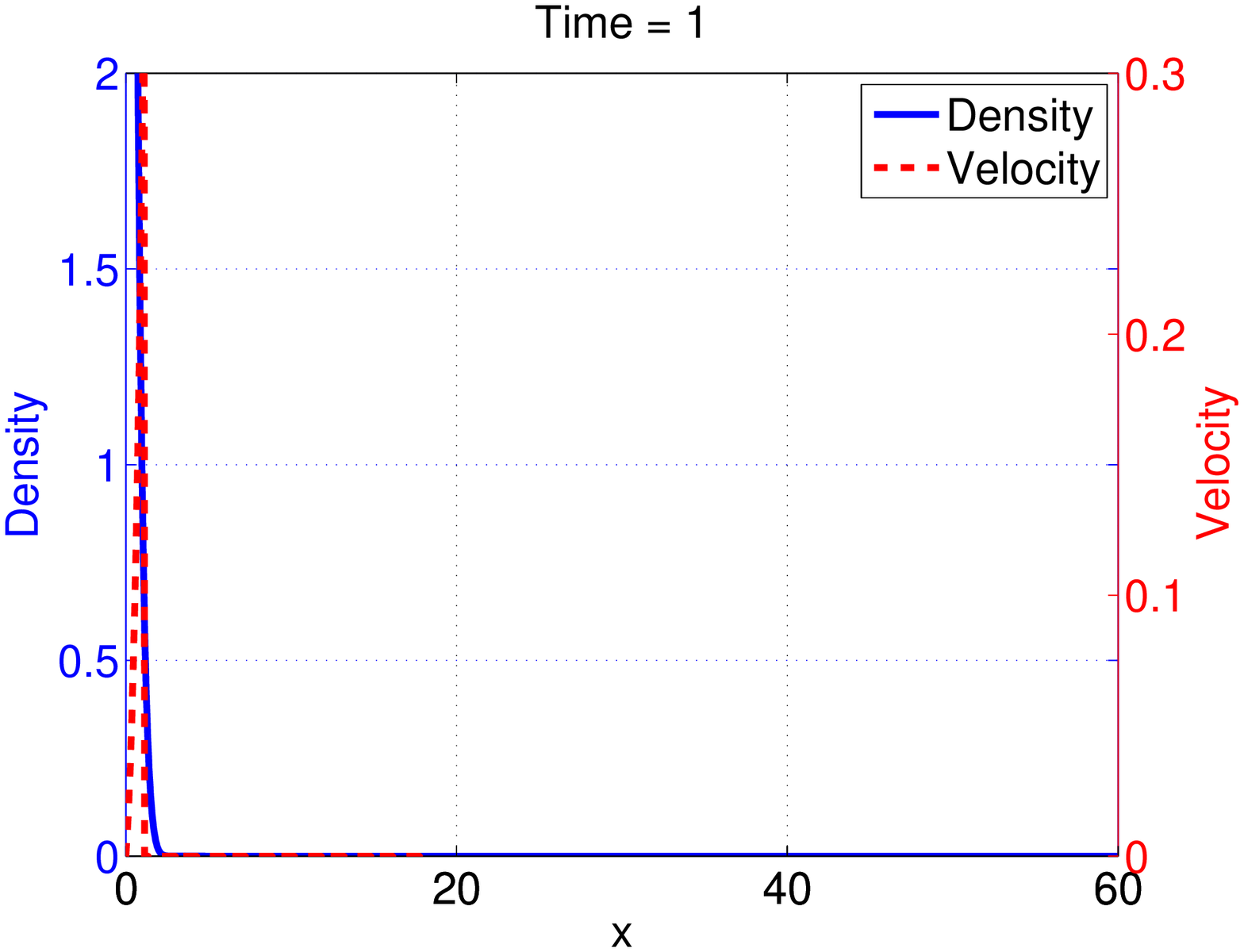}\;
\includegraphics[width = .45\linewidth]{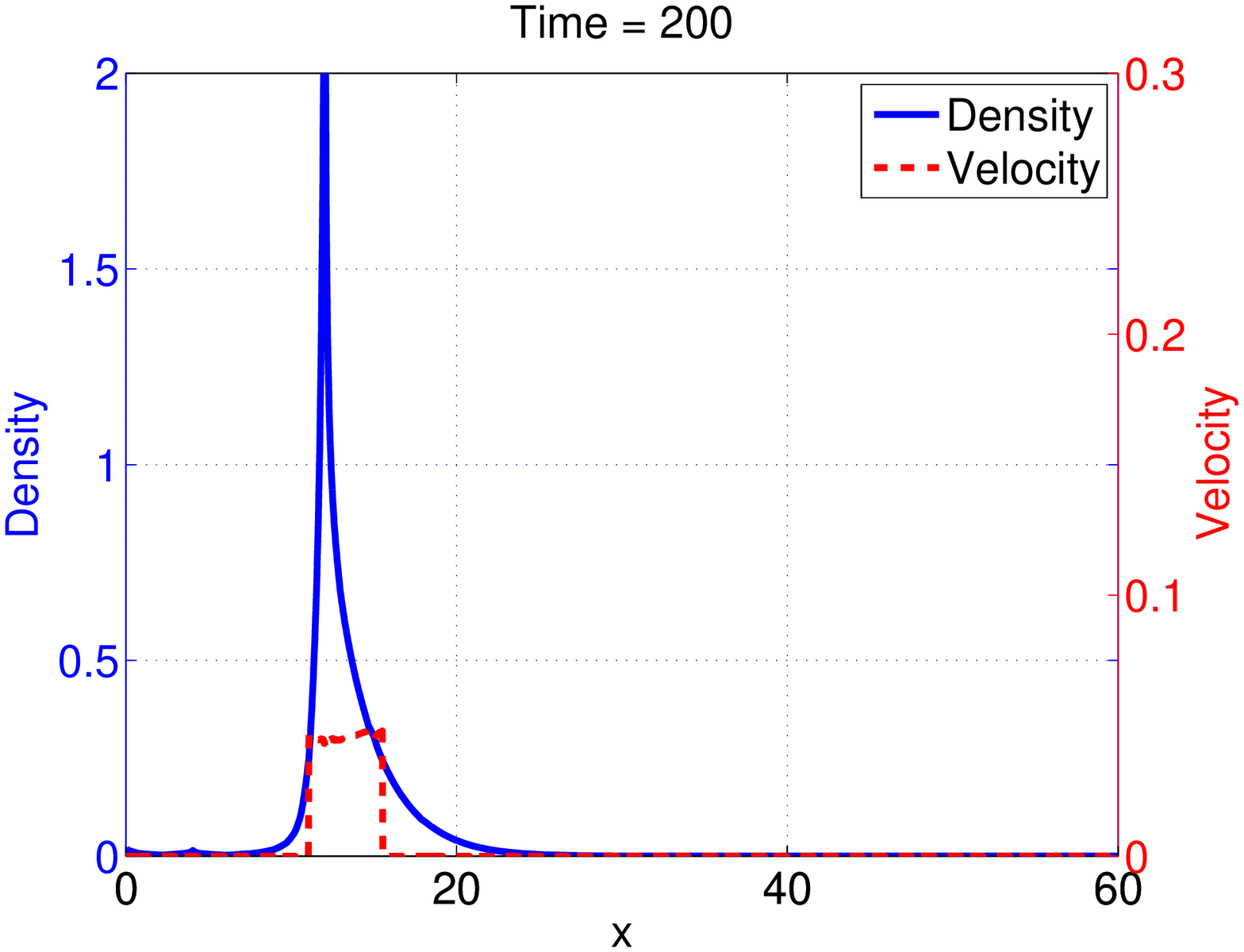}\\
\includegraphics[width = .45\linewidth]{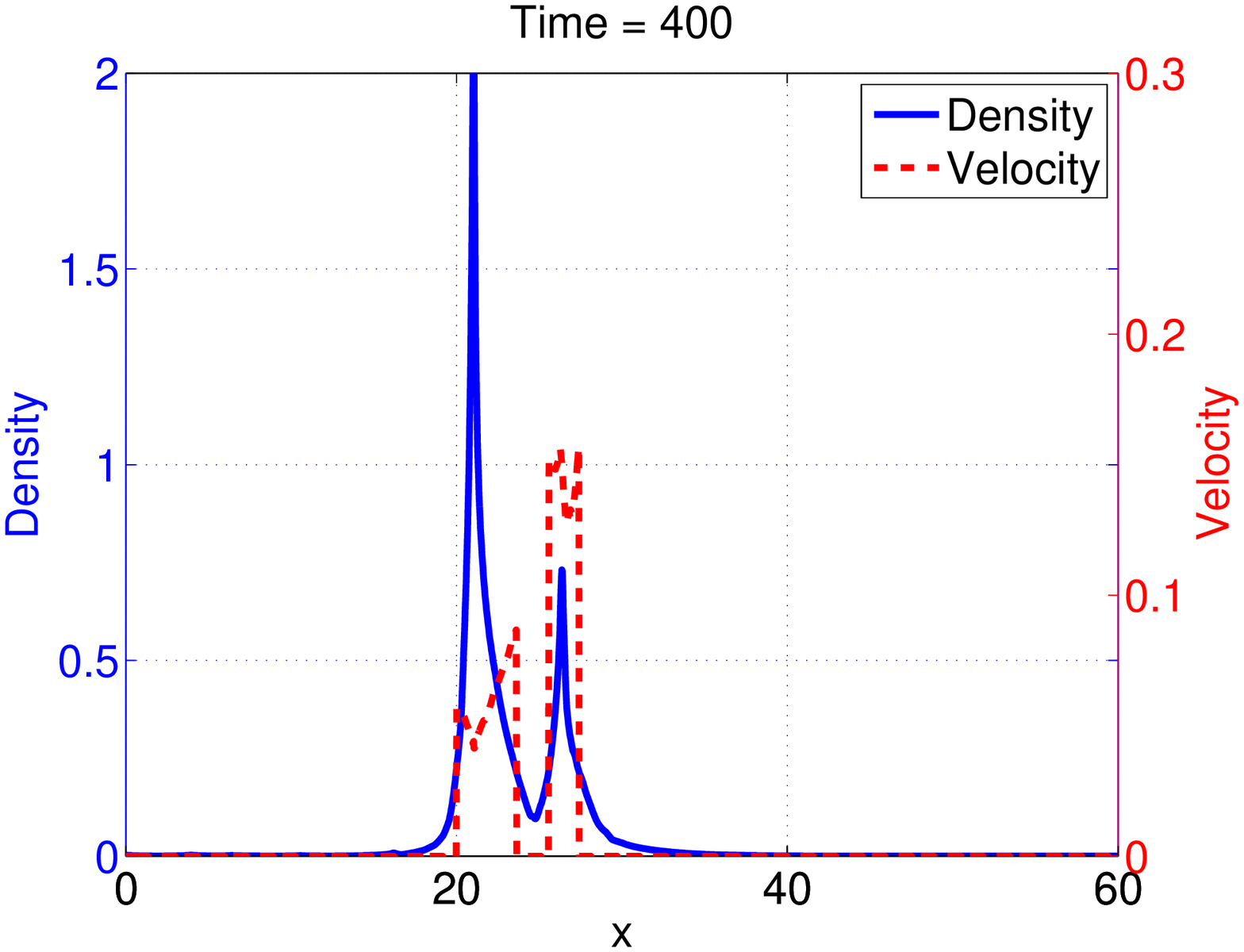}\;
\includegraphics[width = .45\linewidth]{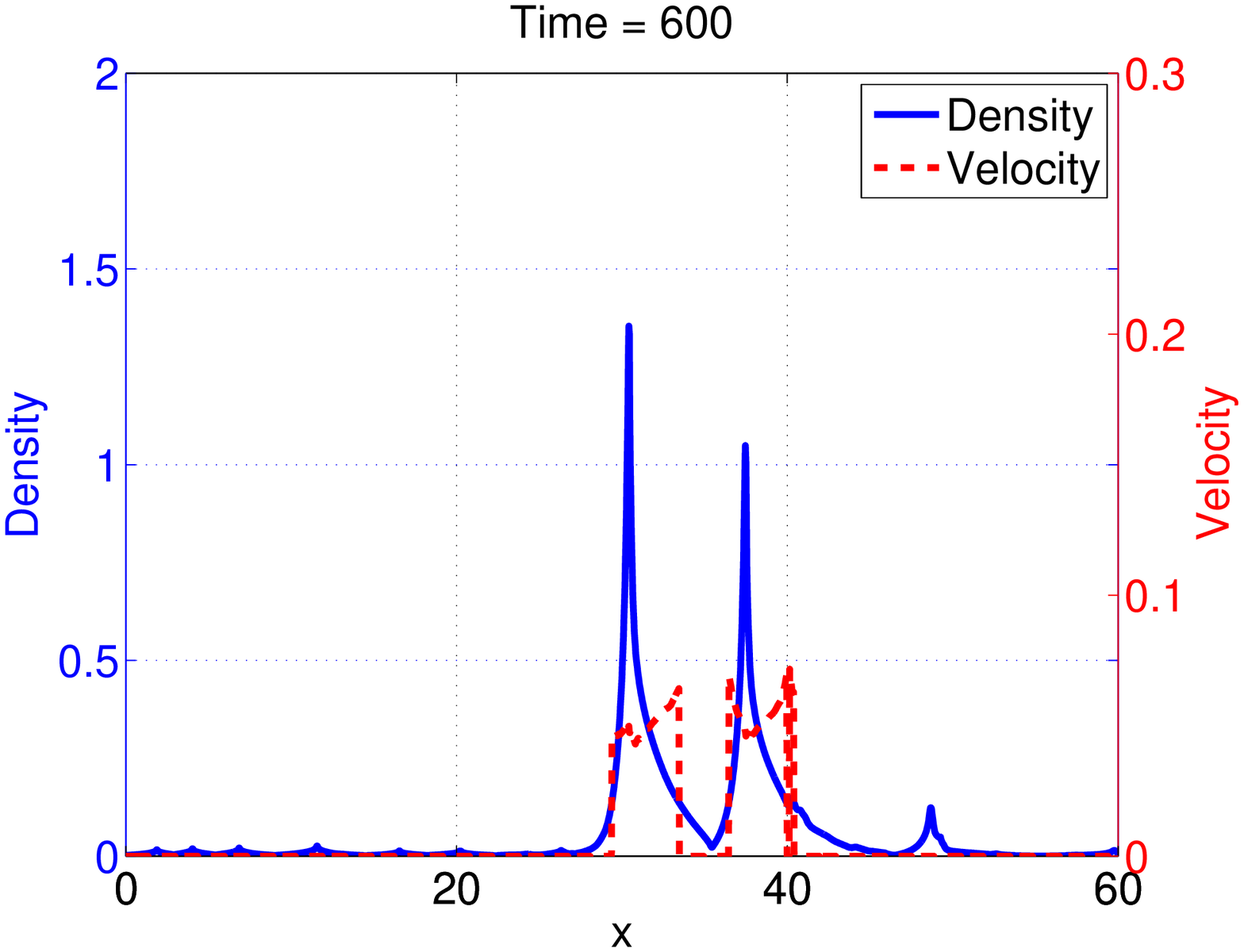}
\caption{Numerical simulations of the Cauchy problem in the case of non existence of a  travelling wave (third example in Section \ref{sec:EX3}). As opposed to Figure \ref{fig:wave}, the cell density quickly splits into smaller units. However, we still observe global propagation of the cell population to the right side.}
\label{fig:nowave}
\end{center}
\end{figure}

The third example consists in a set of velocities and weights, for which there is seemingly no admissible wave speed, as in Figure \ref{fig:counter-intuitive}. The velocities are chosen as follows:
\begin{multline*}
v_1 = 0.015,\, v_2 =  0.03 \,,  v_3 =  0.05 \,,  v_4 =  0.08 \,,  v_5 =   0.12  \,,\\  v_6 =  0.16 \,,   v_7 =  0.25 \,,  v_8 =   0.5 \,,   v_9 =  1\,.
\end{multline*}
The weights are uniform: 
\begin{equation*}
(\forall k\in[-K,K]\setminus\{0\})\quad \omega_k = 1/18\, , \quad \omega_0 = 0\, .
\end{equation*}
The other parameters are as in Section \ref{sec:EX2}.

Let us emphasize that this is a counter-intuitive result. Indeed, it is highly related to the fact that $\Upsilon(0)<0$. So, let us focus on the case $c= 0$. We clearly have $\lambda_+(0) < \lambda_-(0)$. This is a way to express the net biased motion of cells to the right side, due to the contribution of the positive gradient of nutrient $\partial_z N>0$ in the tumbling rate \eqref{eq:T}, together with $\chi_N>0$. We could intuitively deduce that the spatial density $\rho$ is globally tilted to the right side (see Figure \ref{fig:counter-intuitive}). But, this is not true. It may happen that, for small $0<z\ll 1$, $\rho(z)< \rho(-z)$, meaning that the spatial density is locally tilted to the left side (see the zoom in Figure \ref{fig:counter-intuitive}). Then, by choosing appropriately the reaction-diffusion parameters $\alpha, D_S$, it is possible to transfer this local asymmetry to ensure $\Upsilon(0)<0$, meaning that the maximum of $S$ is located on the left side.

Beyond this negative result, it is interesting to run numerical simulations of the Cauchy problem \eqref{eq:meso model}. Indeed, we guess that the net biased motion to the right side makes the wave propagating in a way that is not compatible with the formation of a stable travelling wave. 
Results are shown in Figure \ref{fig:nowave}.
We observe the inclination to form a  wave moving to the right after short time, as expected intuitively. However, the cell density splits quickly into smaller components, as opposed to Figure \ref{fig:wave}, where it is maintained over the duration of the numerical test. This peculiar behaviour agrees  with the non existence of a travelling wave.


\paragraph{Acknowledgement}
This project has received funding from the European Research Council (ERC) under the European Union's Horizon 2020 research and innovation programme (grant agreement No 639638). M.T. has benefited from the PICS Project CNR-CNRS 2015-2017 "Mod\`eles math\'ematiques et simulations num\'eriques pour le mouvement de cellules".
\end{document}